\newcommand\bibstyle@comma{\bibpunct(),a,,}
\newcommand\bibstyle@semicolon{\bibpunct();a,,}
\pretocmd\citet{\citestyle{comma}}\relax\relax
\pretocmd\citep{\citestyle{semicolon}}\relax\relax
\numberwithin{equation}{section}
\definecolor{lightblue}{HTML}{044E9E}
\def\Z{{\mathbb Z}}
\def\R{{\mathbb R}}
\def\cal#1{\mathcal{#1}}
\def\cl#1{\mathcal{#1}}
\def\Leb{{\rm Leb}}
\newcommand{\wt}{\widetilde}
\newcommand{\comment}[1]{}
\newcommand{\ind}{\mathbf{1}}
\def\pp#1{\left(#1\right)}
\newcommand{\proba}{\mathbb P}
\newcommand{\Cov}[0]{\operatorname{Cov}}
\newcommand{\Var}{\operatorname{Var}} 
\renewcommand{\P}{\mathbb P}
\newcommand{\esp}{{\mathbb E}}
\newcommand{\var}{{\rm{Var}}}
\newcommand{\MF}{\mathrm{M4}}
\def\pp#1{ \left(#1\right) }
\def\pb#1{ \left[#1\right] }
\def\pc#1{ \left\{#1\right\} }
\NewDocumentCommand{\evaluat}{sO{\big}mm}{%
  \IfBooleanTF{#1}
   {\mleft. #3 \mright|_{#4}}
   {#3#2|_{#4}}%
}
\theoremstyle{plain}
\newtheorem{Thm}{Theorem}[section]
\newtheorem{Lem}[Thm]{Lemma}
\newtheorem{Pro}[Thm]{Proposition}
\newtheorem{Cor}[Thm]{Corollary}
\theoremstyle{remark}
\newtheorem{Rem}[Thm]{Remark}
\newtheorem{Def}[Thm]{Definition}
\theoremstyle{definition}
\newtheorem{example}{Example}[section]
\newtheorem{definition}{Definition}[section]
\xpatchcmd{\proof}{\@addpunct{.}}{\@addpunct{:}}{}{}
\DeclareFontFamily{U}{mathx}{\hyphenchar\font45}
\DeclareFontShape{U}{mathx}{m}{n}{<-> mathx10}{}
\DeclareSymbolFont{mathx}{U}{mathx}{m}{n}
\DeclareMathAccent{\widebar}{0}{mathx}{"73}
\newcommand{\mockalph}[1]{}
\begin{document}

\def\spacingset#1{\renewcommand{\baselinestretch}%
{#1}\small\normalsize} \spacingset{1}

\newtheorem*{assumptionBIC*}{\assumptionnumber}
\providecommand{\assumptionnumber}{}
\makeatletter
\newenvironment{assumptionBIC}[2]
 {%
  \renewcommand{\assumptionnumber}{Assumption #1#2}%
  \begin{assumptionBIC*}%
  \protected@edef\@currentlabel{#1#2}%
 }
 {%
  \end{assumptionBIC*}
 }
\makeatother


\title{On extremes for Gaussian subordination
\footnote{AMS subject classification. Primary: 60G55, 60G70. Secondary: 60G15.}
\footnote{Keywords:  Exceedance point processes, extremal dependence, Gaussian subordination, multivariate extremal index, hypercontractivity.}}

\author{
Shuyang Bai \\ University of Georgia
                            \and
Marie-Christine D\"uker \\ Technical University of Munich}
\date{\today}

\maketitle

\bigskip

\begin{abstract}
\noindent
This paper investigates extreme value theory for processes obtained by applying transformations to stationary Gaussian processes, also called subordinated Gaussian {processes}. The main contributions are as follows: First, we refine the method of \cite{sly2008nonstandard} to allow the covariance of the underlying Gaussian process to decay more slowly than any polynomial rate, nearly matching Berman’s condition. Second, we extend the theory to a multivariate setting, where both the subordinated process and the underlying Gaussian process may be vector-valued, and the transformation is finite-dimensional. In particular, we establish the weak convergence of a point process constructed from the subordinated Gaussian process, from which a multivariate extreme value limit theorem follows.
A key observation that facilitates our analysis, and may be of independent interest, is the following: any bivariate random vector derived from the transformations of two jointly Gaussian vectors with a non-unity canonical correlation always remains extremally independent. This observation also motivates us to introduce and discuss a notion we call $m$-extremal-dependence, which extends the classical concept of $m$-dependence.
Moreover, we relax the restriction to finite-dimensional transforms, extending the results to infinite-dimensional settings via an approximation argument. As an illustration, we establish a limit theorem for a multivariate moving maxima process driven by regularly varying innovations that arise from subordinated Gaussian processes with potentially long memory.
\end{abstract}

\section{Introduction}

Let $\{Y_k\}_{k \in \mathbb{Z}}$ be a stationary sequence of random vectors $Y_k = (Y_{k,1}, \dots, Y_{k,d})'$, $d\in \Z_+$.  
The main interest of this paper is to establish limit theorems for the componentwise maxima over the first $n$ observations of such processes defined as
\begin{equation} \label{eq:sample_extreme} M_{n,i} =  \max_{1\le k\le n} Y_{k,i}, \quad i = 1, \dots, d, \quad n\in \Z_+. \end{equation} 
More specifically,  we  consider for suitable  threshold sequences   $u_n = (u_{n,1}, \dots, u_{n,d})'\in \mathbb{R}^d$,  the limit of 
\begin{equation} \label{eq:sample_extreme_max}   \mathbb{P}\left( M_{n,1} \leq u_{n,1},\ldots, M_{n,d} \leq u_{n,d} \right),   
\end{equation}
as $n\rightarrow\infty$.
This scheme covers the affine normalization in the classical multivariate extreme value theory by setting $u_{n,i}=u_{n,i}(x_i)=x_i/{a_{n,i}}+b_{n,i}$, where $a_{n,i}>0$ and $b_{n,i}\in \R$ are suitable scaling and centering sequences, $i=1,\ldots,d$, and $(x_1,\ldots,x_d)'\in \R^d$ are coordinate variables for the limit joint CDF. More generally,  following  \cite{leadbetter1983extremes}, we shall consider $u_n$ so that each marginal tail probability $\P(Y_{k,i}>u_{n,i})\sim\tau_i/n$, as $n\rightarrow\infty$, for some constant $\tau_i\in (0,\infty)$; see also \eqref{eq:tailbehavemulti_1} below. 

For independent and identically distributed  (i.i.d.) random vectors $\{Y_k\}$, complete descriptions of multivariate extremes can be traced back to the seminal works of  \citet{de1977limit} and \citet{Pickands1981multivariate}. We also refer to the monographs \cite{Resnick1987extreme,deHaanFerreira2006}.
In order to weaken the assumption of independence, 
for univariate sequences,  the limit was studied under certain mixing conditions on $\{Y_k\}$ introduced by \cite{Leadbetter1974on}, usually denoted by $D(u_n)$, where $\pp{u_n}$ denotes a sequence of threshold values that can be related to each $u_{n,i}$ in \eqref{eq:sample_extreme_max}; see  also
\cite{davis1979maxima,davis1982limit}. In addition, to ensure the same asymptotic behavior as the i.i.d.\ case,   an anti-clustering condition denoted as $D'(u_n)$  was made in \cite{leadbetter1983extremes_paper} to restrict the probability of co-exceedances.  See the monograph \cite{leadbetter1983extremes} for a summary of the univariate results.
These conditions and the corresponding limit theorems were later generalized to the multivariate setting by \cite{hsing1989extreme}.  

In the univariate case, to characterize the behavior of the maximum in the presence of strong local dependence, where exceedances tend to cluster so that the limit behavior of extremes may deviate  from the i.i.d.\ case, \cite{leadbetter1983extremes_paper} introduced the extremal index. This parameter can be interpreted as the reciprocal of the expected cluster size of extremes; see also \cite{hsing1988exceedance} for related developments.  \cite{nandagopalan1990multivariate,nandagopalan1994multivariate} extended the notion of extremal index to the multivariate setup; see also \cite{smithcharacterization}. Further important developments, e.g., \cite{davis1995point,basrak2009regularly},   provided   explicit descriptions of a typical cluster under the assumption of joint regular variation, leading to the notion of tail process. A comprehensive survey of these developments can be found in recent monographs \cite{kulik20heavy,mikosch2024extreme}. {A recent related example is \citet{bai2023tail}, where the extremal index and the local clustering
structure of extremes are analyzed through tail-process techniques in a setting where the classical
mixing-based approach is not  adopted.}

The results in the literature on the extremes of stationary processes typically rely on certain mixing conditions (e.g., the aforementioned $D(u_n)$ condition) to control the temporal dependence.  In this work, we shall deal with a setup where it is challenging for these mixing conditions to provide sharp dependence controls:  we consider the so-called Gaussian subordination model obtained by transforming an underlying stationary Gaussian process; see \eqref{eq:gaus sub Y multi} below. In particular, we allow the underlying stationary Gaussian processes to exhibit \emph{long memory} (see, e.g., \cite{pipiras2017long}), which   leads to celebrated examples of \cite{rosenblatt1956central}  where strong mixing type conditions may cease to hold.
This  Gaussian subordination model might be best known in terms of the study of its partial sum behavior (e.g., \cite{dobrushin1979noncentral},  \cite{breuer1983central}).  We note that for stationary Gaussian processes themselves, sharp results on how temporal dependence affects their extreme behaviors are well known and date back to \cite{berman1964limit}; see also \cite{leadbetter1983extremes}. However, results on extremes of general (nonmonotonic) transforms of stationary Gaussian processes are scarce, with the most significant development found in \cite{sly2008nonstandard}. See also \cite{kulik2012limit} for results on a stochastic volatility model involving Gaussian subordination.

From the perspective of modeling,
Gaussian subordination is  very flexible, enabling a wide range of marginal distributions, including heavy-tailed ones, and can easily generate temporal and cross-sectional correlation.  
However, some care is needed when it is used to model extremal dependence due to the special nature of Gaussianity, as explained below. 
It is well known \citep{sibuya1960bivariate} that a bivariate Gaussian $(X_1,X_2)$ with correlation $\rho\in (-1,1)$  is extremally independent, namely, $\lim_{x\rightarrow\infty}\P(X_1>x|X_2>x)=0$. 
Moreover, extremal independence persists for $(f(X_1), g(X_2))$ under arbitrary measurable functions $f,g$ (whenever the notion of extremal independence is well-defined).
This phenomenon also extends to a multivariate setup  where $X_1$ and $X_2$ are replaced by a pair of jointly Gaussian vectors with non-unity canonical correlation.   We refer to Section \ref{sec:gaus exp dep} for a detailed discussion. Furthermore, as a truncation strategy to handle Gaussian subordination with a transform spanning possibly an infinite time horizon,  we introduce the notion of $m$-extremal dependence, a notion analogous to the classical $m$-dependence, which may be of independent interest (see Section \ref{sec:m-extremal-dep}).

As mentioned above, a fundamental concept for quantifying the clustering in extremes in the multivariate setup is the multivariate extremal index, originally proposed in \cite{nandagopalan1990multivariate}. We provide a detailed introduction and formal definition in Section \ref{se:multiexindex}. The multivariate extremal index will allow us to characterize the limit in \eqref{eq:sample_extreme_max} and plays a fundamental role in the formulation of our main result (see Theorem \ref{Thm:limit max MA} below).  Behind this main result is the convergence  of a point process (see Theorem \ref{Thm:gap point proc})    of the form
\begin{equation} \label{eq:pointproces}
\xi_{n}=\sum_{j=1}^\infty \delta_{\pp{  Y_{n}(j) ,\,  a_n(j)}},
\end{equation}
where $Y_{n}(j)$ is a suitable  transformation of data $\{Y_k\}$ and $a_n(j)$ is a suitably normalized time index, both to be explained in detail around \eqref{eq:block conv} in Section \ref{se:main_results}. As usual, $\delta_x$ denotes the Dirac measure at point $x$. 
Our construction of the point process \eqref{eq:pointproces} differs from the usual  ones by considering sample blocks with certain gaps that need to be chosen carefully based on the $m$-extremal dependence of the underlying process.
To prove the point process convergence under Gaussian subordination, we refine and extend the ideas introduced in \cite{sly2008nonstandard} which linked the hypercontractivity property of Gaussian Hilbert spaces to the study of extreme values of Gaussian subordinated processes.  In particular, our assumption allows the covariance of the underlying Gaussian to decay more slowly than any polynomial rate,  whereas \cite{sly2008nonstandard} worked with the typical long memory assumption that stipulates a polynomial rate.

To illustrate our assumption on the underlying Gaussian process, recall the following classical result.
Let $\{X_k\}_{k \in \mathbb{Z}}$ denote a stationary  Gaussian process  with autocovariance function $\gamma(n) = \Cov(X_0, X_n)$. 
It is well known that under the mild decay condition of \cite{berman1964limit}:
\begin{equation} \label{eq:generalcovbehave} \lim_{n \to \infty} \gamma(n) \log(n) = 0, \end{equation} 
the maximum of the process, appropriately normalized in the same way as for i.i.d.\ standard Gaussian, converges weakly to a Gumbel distribution. More generally, the associated point process of exceedances converges weakly to a Poisson point process. 
See \citet[Chapter 4]{leadbetter1983extremes} and    \citet[Theorem 16.2.1]{kulik20heavy}. 
We also note that the intermediate regime
$\lim_{n\to\infty}\gamma(n)\log n\in(0,\infty)$ is studied in \cite{mittal1975limit} for corresponding limit theorems.
Our assumptions in Section \ref{se:model} below, if specialized to the univariate case, first require that $\pc{X_k}$ admits a one-sided moving average representation $X_k=\sum_{l=0}^\infty \psi_l \epsilon_{k-l}$ with centered Gaussian    i.i.d.\ $\pc{\epsilon_l}$. Recall that this is assuming $\{X_k\}$ to be purely nondeterministic, a mild condition characterized by the existence of a spectral density $f(\lambda)$ satisfying the Kolmogorov condition $\int_{-\pi}^\pi \log f(\lambda)d\lambda>-\infty$; see \citet[Section 10.7]{grenander1958toeplitz}.  Note that $\gamma(k)=\sum_{l=0}^\infty \psi_{k+l}\psi_l$ under the assumption.  In addition, we suppose that 
\begin{equation}\label{eq:psi l decay univariate}
\psi_{l}=o\left(l^{-1/2} \log(l)^{-1}\right)
\end{equation}
as $l\rightarrow\infty$. The   condition \eqref{eq:psi l decay univariate} can be shown (see Lemma \ref{le:behavior_acf} below) to imply the Berman condition \eqref{eq:generalcovbehave}. On the other hand, condition \eqref{eq:psi l decay univariate} may be viewed as not much stronger than the Berman condition. For example, it allows the possibility $\lim_{n\rightarrow\infty} \gamma(n)n^{\gamma}=\infty$ for any $\gamma>0$; see, e.g., \citet[Section 2.2.4]{pipiras2017long}.  

Finally, we demonstrate the utility of our results, by deriving a novel limit theorem for the so-called  \emph{multivariate maxima of moving maxima (M4)} process defined as
\begin{equation}\label{eq:gaus sub max MA intro}
{Y^{\mathrm{M4}}_{k,i}}=
\bigvee_{r=-\infty}^\infty \bigvee_{j=1}^d a_{ij,r} W_{k-r,j}
\end{equation}
for some suitable sequence $\{ (a_{ij,r})_{i,j=1,\dots,d}\}_{r\in \Z}$ of nonnegative constants and  $W_{k,i}$ has an identical marginal distribution that is regularly varying. In contrast to typical existing results, we do not  assume independence between $W_{k,i}$'s, but instead, we  assume $\{W_k=(W_{k,1},\ldots,W_{k,d})'\}$ is a subordinated Gaussian process and only impose extremal independence in $W_{k,i}$'s. See \eqref{eq:gaus sub max MA} below for more details.
In the univariate setting, models of this type with i.i.d.\  innovation variables $\{W_k\}$ are frequently considered in the context of extreme value analysis of time series; see, e.g., \cite{hsing1986extreme,davis1989basic,hall2002moving}.
The extremal index of a univariate moving maxima process with independent innovations was studied in \cite{weissman1995extremal}.
The multivariate maxima of moving maxima (M4) models were introduced in \cite{smithcharacterization}; see also
\cite{zhang2004behavior,heffernan2007asymptotically,ferreira2012multivariate,tang2013sparse}.
M4 models with dependent, yet extremally independent, innovation variables, to the best of our knowledge, have not been considered in the literature.  

\subsection{The basic model} \label{se:model}
We introduce a multivariate   subordinated Gaussian process. Let $\{X_k=(X_{k,1},\ldots,X_{k,d_0})'\}_{k \in \Z}$ be a stationary, $d_0$-dimensional, Gaussian process with zero mean marginally, $d_0\in \Z_+$.  We consider the following $(m+1)$-moving-window multivariate Gaussian subordination model $\{Y_k\}_{k \in \Z}$ with $Y_k = (Y_{k,1},\dots, Y_{k,d})'$, $d\in \Z_+$, and 
\begin{equation}\label{eq:gaus sub Y multi}
   Y_{k,i}=G_i(X_k,X_{k-1},\ldots,X_{k-m}), \quad k\in \Z, \quad i =1, \dots, d, 
   \quad \text{ and } \quad m \geq 0,
\end{equation}
where $G_i:\R^{ d_0\times (m+1)}\to \R$ are measurable functions, and $m$ is a fixed nonnegative integer.  Note that the dimension $d$ of $Y_k$ could differ from the dimension $d_0$ of $X_k$.

For our setting, we assume that the latent process $\{ X_k\}$ in \eqref{eq:gaus sub Y multi} can be represented as a multivariate causal linear process
\begin{equation}\label{equality_general_linear_process}
X_{k} = \sum_{l=0}^\infty \Psi_{l} \varepsilon_{k-l},
\end{equation}
where  {$\{\Psi_{l}\}_{l \geq 0}$ is a suitable sequence of matrices with $\Psi_{l}=(\psi_{ij,l})_{i,j=1,\dots,d_0}\in \R ^{d_0 \times d_0}$  and 
$\{\varepsilon_{j}\}_{j\in \Z}$ is a sequence of mean zero i.i.d.\ Gaussian random vectors with identity covariance matrix $\esp(\varepsilon_{0}\varepsilon_{0}')=I_{d_0}$. 
Similar to \eqref{eq:psi l decay univariate}, the entries of the matrices $\{\Psi_{l}\}_{l \in \Z}$ are assumed to satisfy
\begin{equation} \label{equality_long_range_dep_linear_process}
\psi_{ij,l}=o\pp{  l^{-1/2} \log(l)^{-1} }, \hspace*{0.2cm} \text{ as } l \to \infty,
\end{equation}
for $i,j=1,\dots,d_0$.  This assumption implies $\sum_{l=0}^\infty \psi_{ij,l}^2<\infty$, and thus the well-definedness of \eqref{equality_general_linear_process}. In addition, it implies the following multivariate analogue of   \eqref{eq:generalcovbehave}:
\begin{equation} \label{eq:Gammamultiasymp} \Gamma(h) := \mathbb{E}[X_h X_0'] = o\left( \frac{1}{\log(h)} \right), \end{equation}
as $h\rightarrow\infty$
entrywise; see Lemma \ref{le:behavior_acf}. 
Furthermore, we assume that
\begin{equation}\label{eq:nonsingular}
\Psi_0 \text{ is nonsingular}.
\end{equation}
In particular, this implies nonsingularity of
\[
\Gamma(0)=\sum_{\ell=0}^{\infty}\Psi_\ell\Psi_\ell'.
\]

As mentioned before, there is limited literature on extreme value behavior  of subordinated Gaussian processes. A    summary of the existing results can be found in Section 16.2.3 of \cite{kulik20heavy}; see also Section 16.4 therein for a brief review of the literature, including \cite{davis1983stable} and \cite{sly2008nonstandard}. 
As noted in Section 16.2 in \cite{kulik20heavy}, the results in \cite{davis1983stable} implicitly assumed that the function $G$ (in the univariate setup) is monotonic. Neither \cite{sly2008nonstandard} nor our results require such an assumption.

\begin{Rem}
The general model considered in this paper covers a number of familiar classes of transformations. These include, for example, neural-network transforms of a finite window of the input process, of the form
\[
Y_{k,i}
=
\mathrm{NN}_i\!\big((X_k',\dots,X_{k-m}')'\big),
\]
where \(\mathrm{NN}_i:\mathbb{R}^{d_0 \times (m+1)}\to\mathbb{R}\) is the input--output map of a feedforward neural network, that is,
\[
\mathrm{NN}_i(x)
=
W_{i,L}\phi_{i,L-1}\!\Big(
W_{i,L-1}\phi_{i,L-2}\big(
\cdots \phi_{i,1}(W_{i,1}x+b_{i,1})\cdots
\big)+b_{i,L-1}
\Big)+b_{i,L},
\]
for suitable weight matrices \(W_{i,\ell}\), bias vectors \(b_{i,\ell}\), and measurable activation functions \(\phi_{i,\ell}\). We do not treat these specific examples in detail since they are readily contained in the general framework.
\end{Rem}

\subsection{Multivariate extremal index} \label{se:multiexindex}

We first recall the notion of \emph{extremal index} in the univariate setting following Section 3.7 in \cite{leadbetter1983extremes}.
\begin{Def}\label{Def:ext index}
A one-dimensional stationary process $\{Y_k\}$ is said to have an extremal index $\theta\in [0,1]$ if for each $\tau>0$, there is a sequence $\{ u_{n}(\tau)\}$ such that
 \begin{equation}\label{eq:x_n}
 \lim_{n \to \infty} n\P\pp{Y_0> u_{n}(\tau)  }=\tau,
\end{equation}
and
 \begin{equation}\label{eq:ext index exp}
  \lim_{n \to \infty} \P\pp{\max\{Y_1,\ldots,Y_n\}\le u_{n}(\tau)}= e^{-\theta \tau}.
 \end{equation} 
\end{Def}
It is known that \eqref{eq:x_n}  is equivalent to 
\begin{equation}\label{eq:uni exp limit}
  \lim_{n \to \infty} \P\left( \max\Big\{\widehat{Y}_1,\ldots,\widehat{Y}_n\Big\}  \leq u_{n}(\tau)\right)= e^{-\tau},
\end{equation}
where  $\{\widehat{Y}_n \}_{n\in \Z}$   are i.i.d.\ copies of ${Y}_0$; see  \citet[Theorem 1.5.1]{leadbetter1983extremes}. 
In other words, comparing \eqref{eq:uni exp limit} to \eqref{eq:ext index exp}, the parameter $\theta$ adjusts the limit in the i.i.d.\ case to account for temporal dependence. In particular, $\theta=1$ suggests the same behavior as in the i.i.d.\ case, while $\theta<1$ signals the temporal clustering of extremes. The reciprocal of $\theta$ often bears the interpretation of average extremal cluster size; see    \citet[Chapter 6]{mikosch2024extreme} for more details.
We also mention that the existence of $u_n(\tau)$  such that the limit relation \eqref{eq:x_n} holds is equivalent to 
\begin{equation}\label{eq:upper end reg}
\lim_{x\rightarrow U_F}\frac{\widebar{F}(x)}{\widebar{F}(x-)}=1,
\end{equation}
where $\widebar{F}(x)=\P(Y_0>x)$, $U_F=\sup\{x: \widebar{F}(u)>0\}$, and $\widebar{F}(x-)$ denotes the left limit of $\widebar{F}$ at $x$; 
see \citet[Theorem 1.7.13]{leadbetter1983extremes}. The condition \eqref{eq:upper end reg} is satisfied if $\widebar{F}$ is continuous {in a left neighborhood of $U_F$}. It is also known to hold if the law of $Y_0$ is in the max-domain of attraction of an extreme value distribution;   see the proof of \citet[Theorem 1.6.2]{leadbetter1983extremes}.

The \emph{multivariate extremal index} extends the concept of the one-dimensional extremal index to cases where $Y_k$ is vector-valued, whose definition goes back to \cite{nandagopalan1990multivariate,nandagopalan1994multivariate}; see also \cite{smithcharacterization,martins2005multivariate,robert2008estimating}.
Following \cite{martins2005multivariate}, a formal definition is given next.
\begin{Def} \label{Def:ext index multi}
Let $\{Y_n\}_{n\in\Z}$ be a  stationary sequence taking values in $\R^d$. 
For each $\tau=(\tau_1,\dots,\tau_d)'$, $\tau_i\in (0,\infty)$, suppose there exists a threshold sequence $u_n(\tau)=(u_{n,1}(\tau_1),\dots,u_{n,d}(\tau_d))'$ satisfying the marginal tail condition 
\begin{equation} \label{eq:tailbehavemulti_1}
    n \P( Y_{0,i} > u_{n,i}(\tau_i)) \to \tau_i, \quad i =1, \dots, d,
\end{equation}
as $n\to\infty$. 
{
Write
\begin{equation}\label{eq:M_n component}
M_n=(M_{n,1},\dots,M_{n,d})',
\qquad
M_{n,i}=\max_{1\le k\le n} Y_{k,i},\quad i=1,\dots,d,
\end{equation}
for the vector of componentwise maxima, and let
\[
\widehat{M}_n=(\widehat{M}_{n,1},\dots,\widehat{M}_{n,d})',
\qquad
\widehat{M}_{n,i}=\max_{1\le k\le n}  \widehat{Y}_{k,i},\quad i=1,\dots,d,
\]
where $\{ \widehat{Y}_k\}_{k\in\mathbb Z}$ is an i.i.d.\ sequence of copies of $Y_0$.
}
The multivariate extremal index $\theta(\tau)\in[0,1]$ is then defined by the following limit relations as
$n\to\infty$, 
\begin{equation} \label{eq:tailbehavemulti_2}
    \P( \widehat{M}_n \leq u_{n}(\tau)) \to {H}(\tau),\  
    \text{ for some        function } {H}(\tau)\in (0,1),\ \tau\in (0,\infty)^d,
\end{equation}
and
\begin{equation} \label{eq:tailbehavemulti_3}
    \P( M_n \leq u_{n}(\tau)) \to {H}(\tau)^{\theta(\tau)}.
\end{equation}
Here and throughout, the inequality $\le$ between two vectors (e.g., $M_n \leq u_{n}(\tau)$) of the same length is understood   componentwise, and $\nleq$ means the opposite, i.e., at least one component on the left is strictly greater than the corresponding component on the right.
\end{Def}

\begin{Rem}
We note that unlike the univariate case,   the relations \eqref{eq:tailbehavemulti_1} and \eqref{eq:tailbehavemulti_2} are generally not equivalent as the former only involves the marginal distributions. On the other hand,  applying the inequality $e^{-nx/(1-x) }\le (1-x)^n\le e^{-nx}$, $x\in [0,1)$, $n\in \Z_+$,  to   $\P( \widehat{M}_n \leq u_{n}(\tau))=\pb{1-\P\pp{Y_0\nleq u_n(\tau)}}^n$,  and  taking into account the relation   $\max_{1\le k\le n}\P\pp{Y_{0,i}>u_{n,i}(\tau_i)} \le\P\pp{Y_0\nleq u_n(\tau)}\le \sum_{i=1}^d \P(Y_{0,i}>u_{n,i}(\tau_i))$ as well,  
we see that \eqref{eq:tailbehavemulti_1} implies
\[
e^{-(\tau_1+\ldots+\tau_d)}\le \liminf_{n \to \infty} \P( \widehat{M}_n \leq u_{n}(\tau)) \le \limsup_{n \to \infty} \P( \widehat{M}_n \leq u_{n}(\tau))\le e^{-\max(\tau_1,\ldots,\tau_d)}.
\]
In addition, under \eqref{eq:tailbehavemulti_1},   the limit relation \eqref{eq:tailbehavemulti_2} is equivalent to  the limit relation
\begin{equation}\label{eq:limit G(tau)}
       \lim_{n\rightarrow\infty} n \P\pp{Y_0\nleq u_n(\tau)}= -\log\pc{ {H}(\tau) }, \  
    \text{ for some        function } {H}(\tau)\in (0,1),\ \tau\in (0,\infty)^d,
\end{equation}
where if either limit relation holds, we have 
\begin{equation}\label{eq:G bound}
e^{-(\tau_1+\ldots+\tau_d)}\le {H}(\tau)\le e^{-\max(\tau_1,\ldots,\tau_d)}.
\end{equation}
Furthermore, it is known (e.g., \cite{perfekt1997extreme}) that the existence of $u_n(\tau)$ for both \eqref{eq:tailbehavemulti_1} and \eqref{eq:limit G(tau)} to hold follows if $Y_0$ is in the max-domain of attraction of a multivariate extreme value distribution.
\end{Rem}

{
\begin{Rem}
The multivariate extremal index determines the univariate extremal index of each coordinate
process. Indeed, the extremal index of $\{Y_{k,i}\}_{k\in\mathbb Z}$ is recovered by restricting
the multivariate limit relations to the $i$th coordinate axis, equivalently by letting
$\tau_j\downarrow 0$ for all $j\neq i$ {(see, e.g., \cite[Eq.\ (1.6)]{martins2005multivariate})}. In general, however, the function $\theta(\tau)$ is not
determined by the collection of coordinatewise extremal indices alone, since it also reflects
cross-sectional extremal interaction.
\end{Rem}
}

\section{Main results} \label{se:main_results}

{\subsection{General limit theorems for finite-dimensional Gaussian subordination}}

Our main result concerns establishing \eqref{eq:tailbehavemulti_3} for the multivariate subordinated Gaussian process $\{Y_k\}$ in \eqref{eq:gaus sub Y multi}.  
{
Recall from Definition \ref{Def:ext index multi} that for $\tau=(\tau_1,\dots,\tau_d)'\in(0,\infty)^d$,
\[
u_n(\tau)=\bigl(u_{n,1}(\tau_1),\dots,u_{n,d}(\tau_d)\bigr)'
\]
denotes the threshold vector, with each $u_{n,i}(\tau_i)$ corresponding to the marginal
exceedance level indexed by $\tau_i$. The multivariate extremal index $\theta(\tau)$ is defined
relative to this threshold vector. Assume also the limit relation \eqref{eq:limit G(tau)} (or equivalently, \eqref{eq:tailbehavemulti_2}).
}

An approach that goes back to \cite{o1987extreme}  in the univariate setting and was later generalized to multivariate processes in \cite{smithcharacterization}, characterizes the extremal index as the limit of the conditional probability 
\begin{equation}\label{eq:ext index m-dep multi-1}
\theta(\tau)=\lim_{n \to \infty}  \P\pp{Y_{1}\le u_n(\tau),\ldots,Y_{\ell_n}\le u_n(\tau) \mid Y_{0}\nleq u_{n}(\tau)}
\end{equation}
for some  sequence $\{\ell_n\}_{n \geq 1}$ satisfying $\ell_n\rightarrow \infty$ and $\ell_n/n\rightarrow 0$, as $n\rightarrow\infty$.
The limit in \eqref{eq:ext index m-dep multi-1} is known to exist under a certain mixing condition  with respect to the sequence $(\ell_n)_{n \geq 1}$; see Lemma 2.2 in \cite{smithcharacterization}.

We shall suppose that the following limits
\begin{equation}\label{eq:ext index m-dep multi}
\theta_\ell(\tau):=\lim_{n \to \infty}  \P\pp{Y_{1}\le u_n(\tau),\ldots,Y_{\ell}\le u_n(\tau) \mid Y_{0}\nleq u_{n}(\tau)} 
\end{equation}
exist  for all $\ell=0,\dots,m$. Since this involves a finite fixed $m$, it is a mild assumption, which holds if the joint law of $\pp{Y_{0}, Y_{1},\ldots,Y_{m}}$ is in the max-domain of attraction of a multivariate extreme value distribution (see Proposition \ref{Pro:suff MEV} below). It turns out that for the model \eqref{eq:gaus sub Y multi} which is $m$-extremally-dependent (see Section \ref{sec:m-extremal-dep}),  we have $\theta_\ell(\tau)=\theta_m(\tau)=\theta(\tau)$  for all $\ell>m$.  

\begin{Thm}\label{Thm:max}
Suppose $\{Y_k\}_{k\in \Z}$ is as   in \eqref{eq:gaus sub Y multi} with the underlying Gaussian $\{X_k\}_{k \in \Z}$ given as in \eqref{equality_general_linear_process} satisfying \eqref{equality_long_range_dep_linear_process} and \eqref{eq:nonsingular}. Assume for a sequence $\{u_n(\tau)\}_{n \geq 1}$ the limit relations \eqref{eq:tailbehavemulti_1}, \eqref{eq:limit G(tau)}   and \eqref{eq:ext index m-dep multi} hold.
Then, the limit relation 
\begin{equation} \label{eq:sample_extreme_max_res}
    \lim_{n \to \infty} \P\left( M_{n} \leq u_{n}(\tau)\right) 
    =
    {H}(\tau)^{\theta(\tau)}
\end{equation}
holds for the same   ${H}(\tau)\in (0,1)$ for $\tau\in (0,\infty)^d$ as in \eqref{eq:limit G(tau)}, and with extremal index $\theta(\tau)=\theta_m(\tau)$ given in \eqref{eq:ext index m-dep multi}.
\end{Thm}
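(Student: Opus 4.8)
The plan is to run a blocking argument that cleanly separates a \emph{long-range decoupling} step, handled by Gaussian hypercontractivity, from a \emph{local} step that reads off the extremal index from the assumed limits $\theta_\ell(\tau)$; this is also the scaffolding behind the point process convergence of Theorem \ref{Thm:gap point proc}. Fix $\tau\in(0,\infty)^d$ and a threshold sequence $u_n=u_n(\tau)$ satisfying \eqref{eq:tailbehavemulti_1} and \eqref{eq:limit G(tau)}, and abbreviate $p_n\defe\P\pp{Y_0\nleq u_n}$, so that $np_n\to-\log G(\tau)$ by \eqref{eq:limit G(tau)}. Choose block and gap lengths with $r_n\to\infty$, $g_n\to\infty$, $m\le g_n=o(r_n)$ and $r_n=o(n)$, and partition a sub-interval of $\{1,\dots,n\}$ into $k_n\sim n/r_n$ consecutive blocks $B_1,\dots,B_{k_n}$ of length $r_n$, each consecutive pair separated by a gap of $g_n$ indices, so $k_nr_n\sim n$ while the total number of gap indices is $k_ng_n=o(n)$. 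Since the probability that some $Y_k$ with $k$ in a gap has $Y_k\nleq u_n$ is at most $k_ng_n\,p_n=o(np_n)=o(1)$, it suffices to prove $\P\big(\bigvee_{k\in B_i}Y_k\le u_n\text{ for all }i\big)\to G(\tau)^{\theta_m(\tau)}$, the $\bigvee$ being componentwise.

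\textbf{Step 1 (blocks decouple).} Each $Y_k$ is a measurable function of $(X_k,\dots,X_{k-m})$, so the event $E_i\defe\{\bigvee_{k\in B_i}Y_k\le u_n\}$ is measurable with respect to the Gaussian vector indexed by the $(r_n+m)$ time points feeding $B_i$; because consecutive blocks are $g_n\ge m$ apart, these index sets are pairwise disjoint, under \eqref{eq:nonsingular} (purely nondeterministic, Kolmogorov condition) the corresponding canonical correlations are all $<1$, and the inter-block Gaussian cross-covariances are bounded by $\sup_{|h|\ge g_n}\|\Gamma(h)\|\to0$ via \eqref{eq:Gammamultiasymp}. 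Expanding the indicators $\ind_{E_i}$ in the Hermite/Wick chaos of the underlying Gaussian field and applying hypercontractivity of the Ornstein--Uhlenbeck semigroup — the multivariate analogue of the mechanism of \cite{sly2008nonstandard}, fed by the non-unity canonical correlation criterion for extremal independence from Section \ref{sec:gaus exp dep} — one bounds $\sum_{1\le i<j\le k_n}\big|\Cov(\ind_{E_i},\ind_{E_j})\big|$ by a quantity that tends to $0$, precisely because the near-Berman decay \eqref{equality_long_range_dep_linear_process} makes the relevant sums of (powers of) covariances convergent while the gaps $g_n$ push their tails to zero. With stationarity (so $\P(E_i)=1-q_n$ is independent of $i$, where $q_n\defe\P\big(\bigvee_{k=1}^{r_n}Y_k\nleq u_n\big)$), this yields $\P\big(\bigcap_iE_i\big)=(1-q_n)^{k_n}+o(1)$.

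\textbf{Step 2 (within a block) and assembly.} Decomposing $\{\bigvee_{k=1}^{r_n}Y_k\nleq u_n\}$ by the index of the last exceedance and using stationarity,
\[
q_n=p_n\sum_{j=0}^{r_n-1}\P\big(Y_1\le u_n,\dots,Y_j\le u_n\mid Y_0\nleq u_n\big).
\]
For $j\le m$ the $j$-th summand converges to $\theta_j(\tau)$ by \eqref{eq:ext index m-dep multi}. For $j>m$ the constraints $Y_{m+1}\le u_n,\dots,Y_j\le u_n$ are asymptotically immaterial: for $i>m$ the vector $Y_i$ depends on Gaussian coordinates disjoint from those of $Y_0$, so $(Y_0,Y_i)$ is extremally independent (Section \ref{sec:gaus exp dep}), whence $\P(Y_i\nleq u_n\mid Y_0\nleq u_n)\to0$; the hypercontractive estimate of Step 1 applied to the pairs $(Y_0,Y_i)$ upgrades this to the anti-clustering bound $\sum_{i=m+1}^{r_n}\P\big(Y_0\nleq u_n,\,Y_i\nleq u_n\big)=o(p_n)$, uniformly enough that $\sup_{m<j<r_n}\big|\P(Y_1\le u_n,\dots,Y_j\le u_n\mid Y_0\nleq u_n)-\theta_m(\tau)\big|\to0$. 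Hence the sum equals $\theta_m(\tau)\,r_n(1+o(1))$, i.e.\ $q_n=\theta_m(\tau)\,r_np_n(1+o(1))$, so $k_nq_n=\theta_m(\tau)\,k_nr_np_n(1+o(1))=\theta_m(\tau)\,np_n(1+o(1))\to-\theta_m(\tau)\log G(\tau)$. Since also $q_n\to0$, we get $\P(M_n\le u_n)=(1-q_n)^{k_n}+o(1)\to\exp\{\theta_m(\tau)\log G(\tau)\}=G(\tau)^{\theta_m(\tau)}$, which is \eqref{eq:sample_extreme_max_res} with $\theta(\tau)=\theta_m(\tau)$; comparing with \eqref{eq:ext index m-dep multi-1} identifies this common value with $\theta(\tau)$.

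\textbf{Main obstacle.} The crux is Step 1 and its reuse in Step 2: the usual $D(u_n)/D'(u_n)$-type controls demand summability of covariances that fails under merely near-Berman decay, and Berman's normal comparison lemma is unavailable since the transforms $G_i$ need not be monotone. The substitute is the chaos-expansion-plus-hypercontractivity bound, which here must be pushed through both (i) for vector-valued $X_k$ and $Y_k$, using the canonical-correlation analysis of Section \ref{sec:gaus exp dep}, and (ii) for covariances that may decay slower than any polynomial — which is exactly why the gaps $g_n\to\infty$ are inserted and must be tuned against $r_n$ and the decay rate in \eqref{equality_long_range_dep_linear_process} so that the pairwise-covariance sum and the anti-clustering sum both vanish. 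Verifying that the gap indices and the boundary terms $j\le m$ contribute negligibly at the required rate is routine given the block sizes chosen above.
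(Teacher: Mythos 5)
Your overall architecture (blocking, hypercontractive decoupling across blocks, last-exceedance decomposition within a block) matches the spirit of the paper, and your Step 2 is essentially the paper's computation of $\Lambda_{r,p}(\{1\})$ in \eqref{eq:Lambda_rp limit}. But Step 1 contains a genuine logical gap: asymptotic vanishing of the \emph{pairwise} covariance sum $\sum_{i<j}|\Cov(\ind_{E_i},\ind_{E_j})|$ does not imply the factorization $\P\bigl(\bigcap_i E_i\bigr)=(1-q_n)^{k_n}+o(1)$. Writing the telescoping identity
$\P\bigl(\bigcap_{i\le k}E_i\bigr)-\prod_{i\le k}\P(E_i)=\sum_{s}\bigl(\prod_{i>s}\P(E_i)\bigr)\,\Cov\bigl(\ind_{E_1\cap\cdots\cap E_{s-1}},\ind_{E_s}\bigr)$
shows that what must be controlled is the covariance of each block event against the indicator of the \emph{entire past}, not against single other blocks. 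This is exactly why the paper bounds $\|\esp[Z_n(i)\mid\mathcal F_{0}]-\esp Z_n(i)\|_1$ (summed over $i$) in Lemma \ref{Lem:key}, relation \eqref{eq:le_statement1}, using the projection operator $Q_{H_1H_2}$ and the Mehler transform (Proposition \ref{Pro:proj mehler}) with $\mathcal F_0$ the $\sigma$-field of all past innovations of \eqref{equality_general_linear_process}; that conditional-expectation estimate dominates the covariance with any bounded past-measurable variable and is strictly stronger than pairwise decorrelation. It is also the one place where the causal linear representation (rather than just the covariance decay) is genuinely used. A related misattribution: the gaps do not perform the decoupling you ascribe to them --- with $\Gamma(h)=o(1/\log h)$, $\sup_{h\ge g_n}\|\Gamma(h)\|$ decays far too slowly to help; in the paper the gap's only role is $p\ge m$, ensuring distinct blocks read disjoint Gaussian coordinates so the relevant canonical correlations are $<1$, and the decoupling then comes from hypercontractivity combined with the smallness of the block exceedance probabilities ($n^{2\rho/(1+\rho)}$ is subpolynomial when $\rho=o(1/\log n)$).

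A second, unaddressed obstacle is created by your choice of growing blocks $r_n\to\infty$. The canonical-correlation bound \eqref{eq:bar rho_n} that powers the decoupling has $\lambda_{\min}(r+m)$, the smallest eigenvalue of the covariance matrix of $r+m$ consecutive $X_k$'s, in the denominator; under \eqref{equality_long_range_dep_linear_process} the spectral density need not be bounded away from zero, so $\lambda_{\min}(r_n+m)$ may degenerate as $r_n\to\infty$ and the estimate is not available uniformly over growing blocks. The paper avoids this by keeping $r$ and $p$ \emph{fixed}, proving the Poisson limit of Theorem \ref{Thm:gap point proc} for each fixed $(r,p)$, and only then letting $r\to\infty$; the gap indices, which are \emph{not} negligible for fixed $p$, are handled by applying the same Poisson limit with the roles of blocks and gaps exchanged, so that their contribution $\Lambda_{p,r}(\{1\})\to0$ as $r\to\infty$. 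Your uniform anti-clustering claim in Step 2, $\sum_{\ell=m+1}^{r_n}\P(Y_0\nleq u_n,\,Y_\ell\nleq u_n)=o(p_n)$, is plausible and provable by the hypercontractive argument of Proposition \ref{Pro:D'(u)}, but it too must be carried out for the multivariate events and is only needed because you let $r_n$ grow; with fixed $r$ the paper needs only finitely many such terms, each $o(1/n)$ by $m$-extremal independence.
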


{
\begin{Rem}
In the univariate case $d=1$, the quantity $\theta(\tau)=\theta_m(\tau)$ in Theorem 2.1
coincides with the ordinary extremal index of $\{Y_k\}$ and is therefore independent of $\tau$.
For $\ell<m$, the quantities $\theta_\ell(\tau)$ may still depend on $\ell$ and should be
viewed as finite-horizon cluster-survival probabilities. Under $m$-extremal dependence,
Proposition \ref{Pro:m extre dep} below shows that these quantities stabilize at lag $m$, namely
$\theta_\ell(\tau)=\theta_m(\tau)$ for all $\ell>m$.
\end{Rem}
}

The proof of Theorem \ref{Thm:max}, given in Section \ref{sec:pf Thm max}, is based on a point process approach. In what follows, we formulate a   general point process result, extending beyond the needs of Theorem \ref{Thm:max}, which may be of independent interest.
 
Let $r\in \Z_+$, and consider  $\pp{ Y_{(j-1)(r+p)+1},\ldots,  Y_{jr+(j-1)p}}$, $j\in \Z$, that is, blocks of consecutive  $Y_k$'s, each of length $r$, with a gap of size $p\ge 0$  between two adjacent blocks.  Assume throughout that 
$$p\ge m,
$$
where $m$ is as in \eqref{eq:gaus sub Y multi}. When $m=0$, that is, when the transform $G_i$ in \eqref{eq:gaus sub Y multi} depends only on a single time coordinate, the gap $p$ can be taken as $0$, as considered by \cite{sly2008nonstandard}.  The requirement $p\ge m$ ensures that the underlying Gaussian $X_k$'s involved in different  $Y$ blocks above are disjoint.

We will use the notion of vague convergence for locally finite measures on a localized Polish space, for which we refer to \citet[Appendix B]{kulik20heavy} for the details.
Suppose $E$ is a localized Polish space.  
Let $T_n:\R^{d \times r} \to  E$, $n\in \Z_+$, be a sequence of measurable transforms and set
\begin{equation}\label{eq:Y_n(j)}
Y_n(j)=Y_{r,p,n}(j)=  T_n \pp{Y_{(j-1)(r+p)+1} ,\ldots,     Y_{jr+(j-1)p}}\in E, \quad  j\in \Z_+,
\end{equation}
where we have suppressed the dependence on $r,p$ in the notation $Y_{n}(j)$; here and below, we put a block-level time index  (e.g., $j$ in $Y_{n}(j)$)   in  parentheses. 
We assume further that there exists a locally finite measure $\Lambda_{r,p}$ on $E$  such that
\begin{equation}\label{eq:block conv}
\frac{n}{r+p} \P \pp{Y_{r,p,n}(1) \in \cdot   } \overset{v}{\rightarrow} \Lambda_{r,p}(\cdot), 
\end{equation}
as $n\rightarrow\infty$, where  $\overset{v}{\rightarrow} $ stands for vague convergence.
Define the point process on $E\times [0,\infty)$ as  
\begin{equation} \label{eq:pointproces-Y_n(j)}
\xi_{n}=\xi_n(r,p)=\sum_{j=1}^\infty \delta_{\pp{  Y_{n}(j) ,\,  j   (r+p)/n}},
\end{equation}
where $\delta$ denotes the Dirac delta measure.

\begin{Thm}\label{Thm:gap point proc}
Suppose $\{Y_k\}_{k \in \Z}$ is as in Theorem \ref{Thm:max}. 
Assume in addition \eqref{eq:block conv} holds. Then, the process $\xi_n$ in \eqref{eq:pointproces-Y_n(j)} satisfies the weak convergence
\[
\xi_{n} \Rightarrow \xi , \text{ as } n\rightarrow\infty,
\]
where $\xi$ is a Poisson point process on $E\times [0,\infty)$ with mean measure $\mu_r=\Lambda_{r,p}\times \Leb$,  and $\Lambda_{r,p}$ as in \eqref{eq:block conv}.
\end{Thm}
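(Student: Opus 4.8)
The plan is to establish Poisson convergence of $\xi_n$ via the standard criterion for point processes on a localized Polish space (see, e.g., \citet[Chapter B]{kulik20heavy}): it suffices to show, for every relatively compact set $B \subset E$ bounded away from the ``boundary'' (equivalently, every $B$ in a suitable ring generating the topology) and every $t > 0$, writing $A = B \times [0,t]$,
\begin{equation}\label{eq:poisson-crit-sketch}
\esp\, \xi_n(A) \to \mu_r(A) = t\,\Lambda_{r,p}(B), \qquad \P\pp{\xi_n(A) = 0} \to e^{-\mu_r(A)},
\end{equation}
and more generally the joint vacancy probabilities $\P(\xi_n(A_1) = 0, \dots, \xi_n(A_k) = 0) \to \exp(-\sum_i \mu_r(A_i))$ for disjoint $A_1,\dots,A_k$ of this form. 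Actually, since the second coordinates $j(r+p)/n$ are deterministic and equally spaced with spacing $(r+p)/n \to 0$, it is cleanest to reduce to the following: for any fixed Borel $B \subset E$ with $\Lambda_{r,p}(\partial B) = 0$ and $\Lambda_{r,p}(B) < \infty$, and any $0 \le s < t$, the number $N_n(B,[s,t])$ of indices $j$ with $j(r+p)/n \in [s,t]$ and $Y_n(j) \in B$ converges in distribution to a Poisson random variable with mean $(t-s)\Lambda_{r,p}(B)$; and finitely many such counts over disjoint regions are asymptotically independent. Convergence of all finite-dimensional ``region counts'' of this form implies $\xi_n \Rightarrow \xi$.

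The mean computation is immediate from \eqref{eq:block conv}: the number of admissible blocks with time index in $[s,t]$ is $\sim (t-s)n/(r+p)$, and by stationarity each contributes $\P(Y_n(1) \in B) \sim (r+p)\Lambda_{r,p}(B)/n$, so $\esp N_n(B,[s,t]) \to (t-s)\Lambda_{r,p}(B)$; note $\Lambda_{r,p}(\partial B)=0$ lets us pass from vague convergence to convergence of $\P(Y_n(1)\in B)$ itself. The real work is to upgrade this to Poisson convergence, i.e., to show the block-level exceedance events are asymptotically independent across blocks. I would run this through a Poisson approximation bound (the Chen–Stein or Arratia–Goldstein–Gordon inequality, or equivalently a direct moment/factorial-moment argument): the total variation distance between $N_n(B,[s,t])$ and the Poisson law with the above mean is controlled by (i) $\sum_j \P(Y_n(j)\in B)^2$, which is $O(n \cdot (1/n)^2) = O(1/n) \to 0$; (ii) a ``local'' term $\sum_{j} \sum_{j': 0<|j-j'|\le b_n} \P(Y_n(j)\in B, Y_n(j')\in B)$ for nearby blocks, which must be shown to vanish — this is an anti-clustering / $D'$-type condition at the block level; and (iii) a ``long-range'' mixing-type term $\sum_{j}\sum_{j': |j-j'|>b_n}\bigl|\P(Y_n(j)\in B,Y_n(j')\in B) - \P(Y_n(j)\in B)\P(Y_n(j')\in B)\bigr|$ for a slowly growing $b_n$, plus the analogous long-range decoupling across the disjoint time-regions to get joint independence.

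The main obstacle — and the place where the structure of this paper is essential — is controlling the long-range term (iii) under \emph{only} the Berman-type condition \eqref{equality_long_range_dep_linear_process} on the underlying Gaussian, which is far too weak for any classical strong-mixing ($D(u_n)$) argument. Here is where I would invoke the hypercontractivity machinery that the paper advertises (the refinement of \citet{sly2008nonstandard}): two $Y$-blocks separated by $|j-j'|$ units of block-time involve underlying Gaussian vectors whose cross-covariance entries are, by \eqref{equality_general_linear_process}–\eqref{eq:Gammamultiasymp}, $o(1/\log h)$ with $h \asymp |j-j'|(r+p)$; the maximal correlation between the two Gaussian sub-$\sigma$-fields is then small, and by hypercontractivity of the Ornstein–Uhlenbeck semigroup on the Gaussian Hilbert space one bounds the covariance of the two indicator functions $\ind\{Y_n(j)\in B\}$, $\ind\{Y_n(j')\in B\}$ — each a bounded (hence $L^2$, indeed $L^p$ for all $p$) functional of finitely many of the $X$'s — by a product of their $L^2$ norms times a contraction factor decaying like a power of the Gaussian cross-correlation, i.e., like $(\log h)^{-c}$ or better. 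Summed against the $\sim n/(r+p)$ blocks and the $O(1/n)$ mass of each, and choosing $b_n \to \infty$ slowly (e.g. $b_n = \log n$), this term goes to $0$; crucially the $p \ge m$ gap guarantees the $X$'s feeding distinct $Y$-blocks are genuinely disjoint, so the only coupling is through the Gaussian covariance and the hypercontractive estimate applies cleanly. Term (ii), the local clustering term, I expect to follow from the same hypercontractive covariance bound together with $\Lambda_{r,p}$ being locally finite (so the per-block mass is $O(1/n)$ and a fixed finite window of neighbors contributes $O(b_n/n)\to 0$ after the decoupling is applied within the window as well); the delicate balance is choosing $b_n$ large enough to make (iii) summable yet small enough to keep (ii) negligible, which the near-Berman rate in \eqref{equality_long_range_dep_linear_process} is exactly calibrated to permit. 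Finally, having shown each region-count converges to the correct Poisson law and that finitely many region-counts over disjoint $A_i$ decouple (same long-range estimate), I would conclude $\xi_n \Rightarrow \xi$ with mean measure $\Lambda_{r,p}\times \Leb$ by the standard characterization of Poisson-process convergence via Laplace functionals or via convergence of vacancy probabilities on a generating ring.
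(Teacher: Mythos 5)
Your overall architecture is sound up to and including the local term: the mean computation from \eqref{eq:block conv}, the $O(1/n)$ diagonal term, and the near-diagonal anti-clustering term (ii) all go through, the last one because Proposition \ref{Pro:hyper holder} gives $\esp Z_n(i)Z_n(j)\le \P(Y_n(1)\in B)^{2/(1+\rho)}$ with $\rho$ bounded away from $1$ uniformly in the lag (this needs the nonsingularity of the finite-dimensional Gaussian covariance, Lemma \ref{le:multiBrochwellDavis}), so that $nb_n\, n^{-2/(1+\rho)}\to 0$ for any subpolynomial $b_n$. The genuine gap is your long-range term (iii). The hypercontractive machinery does not produce the covariance bound you invoke. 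What it gives is one-sided: an upper bound $\esp Z_n(i)Z_n(j)\le q^{2/(1+\rho(h))}$ with $q\asymp 1/n$, i.e.\ a multiplicative excess $n^{2\rho(h)/(1+\rho(h))}$ over the independent value $q^2$, with no matching lower bound and hence no control of $|\Cov(Z_n(i),Z_n(j))|$. The alternative two-sided estimate, the maximal-correlation inequality $|\Cov(Z_n(i),Z_n(j))|\le \rho(h)\,\|Z_n(i)\|_2\|Z_n(j)\|_2\asymp \rho(h)/n$, is summed over $O(n^2)$ distant pairs and yields $O(n\sup_{h\ge b_n}\rho(h))$, which diverges under the near-Berman rate \eqref{equality_long_range_dep_linear_process} (indeed under any condition weaker than $\rho(h)=o(1/n)$). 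Your own arithmetic --- ``$(\log h)^{-c}$ per pair, times $n$ blocks times $O(1/n)$ mass'' --- leaves a factor $n(\log n)^{-c}$ unaccounted for. So the decoupling step, which is the heart of the theorem, is not established.

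The paper avoids pairwise long-range covariance sums entirely. It uses Kallenberg's criterion (mean convergence plus vacancy probabilities on a dissecting semi-ring), partitions the time axis finely, and telescopes the product of vacancy indicators; the cross-interval dependence is then controlled not pair-by-pair but by a single conditional-expectation estimate against the innovation filtration $\mathcal F_t=\sigma(\varepsilon_j,\ j\le tr+(t-1)p)$, namely \eqref{eq:le_statement1} of Lemma \ref{Lem:key}: $\sum_i\|\esp[Z_n(i)\mid\mathcal F_0]-\esp Z_n(i)\|_1\to 0$. This is proved by combining Proposition \ref{Pro:proj mehler} (which dominates $\|\esp[Z\mid\mathcal F_0]-\esp Z\|_2^2$ by $\Var(M_{\rho_i}(Z))$, where $\rho_i$ is the maximal correlation between the block's Gaussian span and the past innovations, controlled by the tail sum $\sum_{l>cn}\|\Psi_l\|_{\mathrm{op}}^2=o(1/\log n)$) with the Mehler monotonicity \eqref{eq:Mehler mono} at the intermediate scale $\ell_n=1/\sqrt{\log n}$ and the hypercontractive bound $\|M_{\ell_n}Z\|_2\le\|Z\|_{1+\ell_n^2}=O(n^{-1/(1+\ell_n^2)})$; the product $(\bar\rho_n^2/\ell_n^2)\cdot n^{-2/(1+\ell_n^2)}$ is then $o(n^{-2})$ after summing. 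If you want to keep a Chen--Stein formulation, the analogous repair is to replace your pairwise term (iii) by the $b_3$-type term $\sum_i\esp|\esp[Z_n(i)-\esp Z_n(i)\mid \mathcal F_{\mathrm{past}}]|$ and prove it with exactly this Mehler/hypercontractivity argument; without some version of \eqref{eq:le_statement1}, the proof does not close.
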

The proof of Theorem \ref{Thm:gap point proc} builds on a refinement and extension of the arguments in \cite{sly2008nonstandard}; full details are provided in Section \ref{sec:pf Thm:gap point proc}. We also note that, when applying Theorem \ref{Thm:gap point proc} to establish Theorem \ref{Thm:max}, the effect of the gaps is negligible at the time scale $n$.

{
\subsection{An application to multivariate moving maxima processes}
}

{
We now illustrate the general theory on a multivariate moving maxima (M4) model. The proof of
Theorem \ref{Thm:gap point proc} does not proceed via a direct point-process convergence for the full M4 process.
Instead, we first apply the finite-dimensional theory in the previous section to truncated versions of the process and
then pass to the limit by means of the approximation argument developed later in Lemma \ref{Lem:approx}.
}

Next, we discuss how to extend the results on Gaussian subordinated $\{Y_k\}$ defined by a finite-dimensional transform, as required in Theorems \ref{Thm:max} and \ref{Thm:gap point proc}, to an infinite-dimensional transform. Theorem \ref{Thm:gap point proc} cannot directly handle the case where the function $G$ depends on infinitely many time coordinates of the Gaussian linear process $\{X_k\}$ in \eqref{equality_general_linear_process}. 
However, in certain scenarios, an approximation strategy originating from   \cite{chernick1991calculating} can be adopted (see Lemma \ref{Lem:approx} below). We shall showcase the use of such a strategy on a multivariate moving-maximum built from Gaussian subordinated innovations below.

Let $h:\R \to \R$ be a measurable function and suppose $W_{k,i}=h(X_{k,i})$ with $ X_k=(X_{k,1},\ldots,X_{k,d})'$ a multivariate Gaussian linear process given by \eqref{equality_general_linear_process} (setting $d_0=d\in \Z_+$) satisfying \eqref{equality_long_range_dep_linear_process} with nonsingular covariance \eqref{eq:nonsingular} at time $0$. In fact, this construction 
ensures {extremal} {in}dependence of $\{W_{k,i}\}$, both in time index $k$, and in space index $i$; see Section \ref{sec:gaus exp dep} and Lemma \ref{le:multiBrochwellDavis} below. Note that $W_{k,i}$'s are marginally identically distributed. In fact, it is possible to extend the setup to allow the function $h$ to vary with respect to $k$ and $i$, although for simplicity we do not pursue this here.

We also assume  that the function $h$ is chosen so that $W_{k,i} \in [0,\infty)$ almost surely, and in addition,  
\begin{equation} \label{eq:alphavarying}
    \P( W_{k,i} > u ) =  u^{-\alpha} L(u),\quad u>0,
\end{equation}
 where $\alpha \in (0,\infty)$, and $L(\cdot)$ is slowly varying at infinity.
 The relation \eqref{eq:alphavarying} ensures that the marginal distribution of each $W_{k,i}$ is in the max-domain of attraction of an $\alpha$-Fr\'echet distribution. 
 
Suppose $\{a_{r}\}_{r \in \Z}$, $a_{r} = (a_{ij,r})_{i,j=1,\dots,d}$, is a suitable sequence of real matrices with nonnegative entries. Then, the multivariate maxima of moving maxima (M4)
  process is defined as
\begin{equation}\label{eq:gaus sub max MA}
{ Y_k^{\mathrm{M4}} = (Y_{k,1},\dots, Y_{k,d})'},\quad  
Y_{k,i}^{\mathrm{M4}}=G_{i}((X_{k-l})_{\ell \in \Z})
:= 
\bigvee_{{r\in \Z}} \bigvee_{j=1}^d a_{ij,r} W_{k-r,j}.
\end{equation}
To avoid degenerate marginal distribution of $Y_{k,i}^{\mathrm{M4}}$, we assume for each $i$, we have $a_{ij,r}>0$ for some $j$ and $r$.
An additional assumption  is that for some $\epsilon\in (0,\alpha)$,
\begin{equation} \label{eq:cond1-maxsum}
    \sum_{r \in \Z}^\infty    a_{ij,r}^{\alpha-\epsilon}  <\infty,\quad i,j=1,\dots,d.
\end{equation}
These assumptions imply that $ \sum_{r \in \Z} \sum_{j = 1}^d  a_{ij,r}^{\alpha}\in (0,\infty) $ for any $i=1,\ldots,d$, and that $Y_{k,i}<\infty$ a.s. (see Lemma \ref{Lem:MA tail RV} below). Set 
\begin{equation}\label{eq:A_i}
    A_i:= \sum_{r\in \Z}\sum_{j=1}^d a_{ij,r}^\alpha\in (0,\infty).
\end{equation}
We note that \eqref{eq:gaus sub max MA} is not stated in the most general form of the M4 process described in \cite{smithcharacterization}, where $d$ is allowed to be infinite. In this work we restrict the attention to a finite $d$, the extension to infinite-dimensional $d$ is left for future investigation.

\begin{Thm}\label{Thm:limit max MA}
Let $\{Y_k^{\MF}\}_{k \in \Z}$ be given by \eqref{eq:gaus sub max MA} with $W_{k,i}=h(X_{k,i})$ satisfying \eqref{eq:alphavarying}. 
Under the conditions \eqref{equality_long_range_dep_linear_process}, \eqref{eq:nonsingular}, \eqref{eq:tailbehavemulti_1}, \eqref{eq:cond1-maxsum}, we have
\begin{equation} \label{eq:EVT max MA}
    \lim_{n \to \infty} \P\left( M_{n} \leq u_{n}(\tau) \right) 
    =
    {H}(\tau)^{\theta(\tau)} {=\exp\pc{  -\sum_{j = 1}^d  \bigvee_{r \in \Z} \pp{  \bigvee_{i = 1}^d  \frac{a_{ij,r}^\alpha \tau_i}{A_i} } }},
\end{equation}
where $M_n$ is as in \eqref{eq:M_n component} defined with $Y_{k,i}^{\MF}$,
\begin{equation}\label{eq:limit G(tau) max MA}
    {H}(\tau)=  \exp\pc{ -  \sum_{j =1}^d \sum_{ r\in \Z} \pp{ \bigvee_{i = 1}^d  \frac{a_{ij,r}^\alpha \tau_i}{A_i}}},
\end{equation}
and the multivariate extremal index
\begin{equation} \label{eq:move-max-extremal-index}
    \theta(\tau)
    =
    \frac{
    \sum_{j = 1}^d  \bigvee_{r \in \Z} \left( \bigvee_{i = 1}^d  a_{ij,r}^\alpha \tau_i/A_i\right)}{
    \sum_{j = 1}^d  \sum_{r \in \Z} \left( \bigvee_{i = 1}^d  a_{ij,r}^\alpha \tau_i/A_i\right)}.
\end{equation}
\end{Thm}

{
For the M4 model in Theorem \ref{Thm:limit max MA}, the thresholds can be made explicit, {yielding the following alternative formulation in terms of a conventional multivariate extreme-value limit result.}}
Let $(a_n)$ be such that as $n\rightarrow\infty$
\[
n\,\mathbb{P}(W>a_n)\to 1.
\]
Since {in view of Lemma \ref{Lem:MA tail RV} below}, we have as $u\rightarrow\infty$,
\[
\mathbb{P}(Y_{0,i}^{\mathrm{M4}}>u)\sim A_i u^{-\alpha}L(u),
\qquad
A_i=\sum_{r\in\mathbb{Z}}\sum_{j=1}^d a_{ij,r}^{\alpha},
\]
it follows that one may choose in Theorem \ref{Thm:limit max MA}
\[
u_{n,i}(\tau_i)=A_i^{1/\alpha}a_n x_i,\quad x_i>0, \quad  \tau_i=x_i^{-\alpha}.
\]
Hence the conclusion of Theorem \ref{Thm:limit max MA} may be written in the equivalent multivariate Fr\'echet limit form
\begin{equation}\label{eq:mult frechet}
\mathbb{P}\!\left(
\frac{M_{n,1}}{A_1^{1/\alpha}a_n}\le x_1,\dots,
\frac{M_{n,d}}{A_d^{1/\alpha}a_n}\le x_d
\right)
\to
\exp\{-V(x_1,\dots,x_d)\},
\end{equation}
where the exponent function $V$ is defined by 
\begin{equation}\label{eq:exponent function V}
    V(x_1,\ldots,x_d)= \sum_{j = 1}^d  \bigvee_{r \in \Z} \pp{  \bigvee_{i = 1}^d  \frac{a_{ij,r}^\alpha x_i^{-\alpha}}{A_i} }
\end{equation}
}

{
\begin{example}
Let $d=2$ and suppose the only nonzero coefficients are
\[
a_{11,0}=a_{11,1}=1,
\qquad
a_{21,0}=1,
\]
with all remaining $a_{ij,r}=0$. Then
\[
Y_{k,1}=W_k\vee W_{k-1},
\qquad
Y_{k,2}=W_k.
\]
Hence
\[
A_1=2,
\qquad
A_2=1.
\]
For the first coordinate,
\[
\theta_1
=
\frac{\sum_{j=1}^d \max_r a_{1j,r}^{\alpha}}
{\sum_r\sum_{j=1}^d a_{1j,r}^{\alpha}}
=
\frac{1}{2},
\]
while for the second coordinate,
\[
\theta_2
=
\frac{\sum_{j=1}^d \max_r a_{2j,r}^{\alpha}}
{\sum_r\sum_{j=1}^d a_{2j,r}^{\alpha}}
=
1.
\]
Thus the two coordinates may have different univariate extremal indices, even though they are
driven by the same innovation sequence.
Moreover, in this example
\[
{H}(\tau_1,\tau_2)
=
\exp\{-(\tau_1+\tau_2)\},
\]
while
\[
\theta(\tau_1,\tau_2)
=
\frac{\max(\tau_1/2,\tau_2)+\tau_1/2}{\tau_1+\tau_2}.
\]
This shows that the multivariate extremal index depends on the direction $\tau$ and is not
determined by $(\theta_1,\theta_2)$ alone.
\end{example}
}

\begin{Rem}
The exponent function associated with \(H(\tau)\) in \eqref{eq:limit G(tau) max MA} is that of a spectrally discrete
max-stable random vector {(see, e.g., \cite{StrokorbSchlather2015})}. Indeed, after marginal standardization, the corresponding max-stable
vector admits a max-linear representation with spectral weights determined by the coefficients
\(a_{ij,r}\). More precisely, let \((Z_{r,j})_{r,j}\) be i.i.d.\ standard \(\alpha\)-Fr\'echet random variables, and set
\[
\widetilde M_i
=
\bigvee_{r\in\mathbb{Z}}\bigvee_{j=1}^d
\frac{a_{ij,r}}{A_i^{1/\alpha}}\, Z_{r,j},
\qquad i=1,\dots,d,
\]
Then the multivariate $\alpha$-Fr\'echet random vector $\wt{M}=\pp{\wt{M}_1,\ldots,\wt{M}_d}'$ has an exponent function corresponding to \(H\).

The clustered limit \(H(\tau)^{\theta(\tau)}\) in \eqref{eq:limit G(tau) max MA} also admits a spectrally discrete max-stable representation. Indeed, writing
\[
\bar a_{ij}:=\bigvee_{r\in\mathbb Z} a_{ij,r},
\qquad i,j=1,\dots,d,
\]
and letting \((\bar Z_j)_{j=1}^d\) be i.i.d.\ standard \(\alpha\)-Fr\'echet random variables, define
\[
  M_i^{\mathrm{cl}}
=
\bigvee_{j=1}^d
\frac{\bar a_{ij}}{A_i^{1/\alpha}}\,\bar Z_j,
\qquad i=1,\dots,d.
\]
Then \(  M^{\mathrm{cl}}=\pp{  M_1^{\mathrm{cl}},\dots,  M_d^{\mathrm{cl}}}'\) has exponent function $V$
 precisely as in \eqref{eq:exponent function V}.
\end{Rem}

\section{Preliminaries}
In this section, we collect some preliminary results that are key to proving our main results and may be of independent interest. In particular, 
we provide some technical background on Gaussian analysis in Section \ref{se:correlation-hyper}, discuss the extremal dependence for Gaussian subordination in Section \ref{sec:gaus exp dep} and introduce the notion of extremal $m$-dependence in Section \ref{sec:m-extremal-dep}. Finally, Section \ref{se:linear process properties} discusses conditions on the linear process representation of the latent Gaussian process.

\subsection{Correlation, projection and hypercontractivity} \label{se:correlation-hyper}

We review certain results about Gaussian spaces that will play key roles in our analysis. Our main reference is \citet{janson1997gaussian}.  

Throughout we fix an underlying probability space, and use  $L^p$ to denote the $L^p$-space of random variables defined on the probability space equipped with the $L^p$-norm $\|\cdot \|_p$, $p\in (0,\infty]$.  For a finite-dimensional real vector, we always use $\|\cdot\|$ to denote its Euclidean norm.  
Recall that a (real) Gaussian Hilbert space  $H$ is  a closed subspace of the Hilbert space $L^2$  consisting of (real) centered Gaussian random variables. We use $L^p(H)$ to denote the  subspace of $L^p$ consisting  of random variables measurable with respect to the $\sigma$-field  $\sigma(H)$ generated by the random variables in $H$.

Suppose  $H_1$ and $H_2$  are two Gaussian Hilbert spaces which are closed subspaces of  $H$. The \emph{canonical correlation} $r(H_1,H_2)$ is defined as  
\begin{equation}\label{eq:can corr}
r(H_1,H_2)=\sup_{X_1\in H_1, X_2\in H_2} \mathrm{Corr}(X_1,X_2)=\sup_{X_1\in H_1, X_2\in H_2} \frac{\Cov(X_1,X_2)}{\|X_1\|_2 \|X_2\|_2},
\end{equation}
and the \emph{maximal correlation} $\rho_{HK}$ is defined as
\begin{equation}\label{eq:max corr}
\rho(H_1,H_2)=\sup_{X_1\in L^2(H_1), X_2\in L^2(H_2)} \mathrm{Corr}(X_1,X_2)=\sup_{X_1\in L^2(H_1), X_2\in L^2(H_2)} \frac{\Cov(X_1,X_2)}{\|X_1\|_2 \|X_2\|_2}.
\end{equation}
In both \eqref{eq:can corr} and \eqref{eq:max corr}, if $\|X_1\|_2$ or $\|X_2\|_2=0$, we understand the correlation or the ratio as $0$.  
Next,
let $P_{H_1H_2}:H_1\to H_2$ denote the orthogonal projection (from $H$) onto $H_2$ restricted to the domain $H_1$, which can be identified with $X\to \esp [X \mid \sigma(H_2)]$. Consider also the operator
\begin{equation}\label{def:QH1H2}
    Q_{H_1H_2}:  L^2(H_1) \to L^2(H_2)  ,  \ X\to \esp [X \mid \sigma(H_2)]-\esp [X].
\end{equation}
The operator norms (with respect to the $L^2$ norm) of $P_{H_1H_2}$  and $Q_{H_1H_2}$ are denoted as $\|P_{H_1H_2}\|$ and $\|Q_{H_1H_2}\|$, respectively.   
The following result is known which identifies the aforementioned correlations and norms; see \citet[Theorem 10.11  and its proof]{janson1997gaussian} and \citet[Theorem 1]{kolmogorov1960strong}.
\begin{Pro}\label{Pro:rho equal}
We have $r(H_1,H_2)=\rho(H_1,H_2)=\|P_{H_1H_2}\|=\|Q_{H_1H_2}\|$.
\end{Pro}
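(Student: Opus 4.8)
The plan is to establish the chain of equalities $r(H_1,H_2)=\rho(H_1,H_2)=\|P_{H_1H_2}\|=\|Q_{H_1H_2}\|$ in a convenient order, drawing on the two cited references. I would first record the two obvious inequalities. Since $H_i\subseteq L^2(H_i)$, the supremum defining $r(H_1,H_2)$ is taken over a smaller set than that defining $\rho(H_1,H_2)$, so $r(H_1,H_2)\le\rho(H_1,H_2)$. Next, observe that for $X_1\in H_1$ with $\|X_1\|_2\le 1$, we have $\Cov(X_1,X_2)=\esp[X_1 X_2]=\esp[X_1\,\esp(X_2\mid\sigma(H_1))]=\esp[X_1 Q_{H_2H_1}X_2]$ (using $\esp X_1=0$), so optimizing over $X_1$ in the unit ball of $H_1$ gives the projection of $X_2$ onto $H_1$; more directly, $r(H_1,H_2)=\sup\{\|P_{H_2H_1}X_2\|_2:\ X_2\in H_2,\ \|X_2\|_2\le 1\}=\|P_{H_2H_1}\|$, and by symmetry of the definition this equals $\|P_{H_1H_2}\|$. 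This handles $r(H_1,H_2)=\|P_{H_1H_2}\|$ with only elementary Hilbert-space manipulations.

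The substantive part is $\rho(H_1,H_2)=\|Q_{H_1H_2}\|$ together with the reverse inequality $\rho(H_1,H_2)\le r(H_1,H_2)$. The identity $\rho(H_1,H_2)=\|Q_{H_1H_2}\|$ is again essentially a restatement: for $X_1\in L^2(H_1)$, $X_2\in L^2(H_2)$ with unit variances and zero means, $\Cov(X_1,X_2)=\esp[X_1 X_2]=\esp[X_1\,\esp(X_2\mid\sigma(H_1))]=\esp[X_1 Q_{H_2H_1}X_2]$, and maximizing over such $X_1$ yields $\|Q_{H_2H_1}X_2\|_2$; maximizing over $X_2$ then gives $\|Q_{H_2H_1}\|=\|Q_{H_1H_2}\|$ (the last equality because adjoint operators have equal norms and $Q_{H_1H_2}^*=Q_{H_2H_1}$). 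So the crux reduces to showing $\rho(H_1,H_2)\le r(H_1,H_2)$, i.e. that passing from the Gaussian variables themselves to arbitrary nonlinear functionals of them does not increase the maximal correlation. This is precisely the content of \citet[Theorem 10.11]{janson1997gaussian} (equivalently \citet[Theorem 1]{kolmogorov1960strong}), and I would invoke it; if a self-contained argument is wanted, the standard route is the Wiener chaos decomposition: write $L^2(H_i)=\bigoplus_{n\ge 0}H_i^{:n:}$ into homogeneous chaoses, note $Q_{H_1H_2}$ maps the $n$-th chaos of $H_1$ into the $n$-th chaos of $H_2$, and show that its restriction to the $n$-th chaos has operator norm at most $r(H_1,H_2)^n\le r(H_1,H_2)$ — this uses that the $n$-th chaos of a Gaussian space is spanned by Wick powers (Hermite polynomials) of elements of the space and that projections interact multiplicatively with Wick products, together with the fact that $r(H_1,H_2)$ already controls the norm of the linear projection $P_{H_1H_2}$. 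Since $Q_{H_1H_2}$ annihilates the $0$-th chaos (constants) and acts block-diagonally with norm $\le r$ on each positive chaos, $\|Q_{H_1H_2}\|\le r(H_1,H_2)$, which combined with $r\le\rho=\|Q_{H_1H_2}\|$ closes the loop.

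The main obstacle is the chaos-norm estimate $\|Q_{H_1H_2}|_{H_1^{:n:}}\|\le r(H_1,H_2)^n$: it requires the machinery of Wick products and the observation that for $X\in H_1$ with $\|X\|_2=1$ one has $\esp[\,:\!X^n\!:\ \mid\sigma(H_2)]=\ :\!(P_{H_1H_2}X)^n\!:$ up to the appropriate normalization, which then gives the $n$-th power. Everything else is bookkeeping. Accordingly, in the write-up I would lean on \citet[Theorem 10.11]{janson1997gaussian} for this step rather than reproducing the chaos argument, and present the remaining equalities as the short Hilbert-space computations above.
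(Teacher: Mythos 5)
Your proposal is correct and follows essentially the same route as the paper, which does not prove this proposition but simply cites \citet[Theorem 10.11]{janson1997gaussian} and \citet[Theorem 1]{kolmogorov1960strong}; your elementary identifications $r=\|P_{H_1H_2}\|$ and $\rho=\|Q_{H_1H_2}\|$ are sound, and for the substantive inequality $\rho\le r$ you invoke the same chaos-decomposition result (the block-diagonal action of $Q_{H_1H_2}$ with norm $\|P_{H_1H_2}\|^n$ on the $n$-th chaos) that the paper itself reuses later in Proposition \ref{Pro:proj mehler}.
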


Recall that the $k$-th Wiener chaos is defined to be $H^{:k:}=\widebar{\cl{P}}_{k}\cap \widebar{\cl{P}}_{k-1}^{\perp}$ with $\widebar{\cl{P}}_{k}$ denoting the closure in $L^2$ of the space of (multivariate) polynomials of elements in $H$ each with a degree not exceeding $k$, $k\in \Z_+$. The $0$th order chaos $H^{:0:}$ is understood as the space of constant random variables. The  spaces $H^{:k:}$ of different order $k$'s are orthogonal to each other, and for any $X\in L^2(H)$, we have  the Wiener chaos expansion {\cite[Theorem 2.6]{janson1997gaussian}}  in $L^2$ as 
\[
X=\sum_{k=0}^\infty \pi_k(X),
\]
where $\pi_k:L^2(H)\to H^{:k:}$ stands for the projection onto $H^{:k:}$.  Note that $\pi_0 X=\esp X$.
The Mehler transform $M_a:L^2(H)\to L^2(H)$, $a\in [-1,1]$, is defined as 
\begin{equation}\label{eq:M_a}
    M_a(X)=\sum_{k=0}^\infty a^k \pi_k(X).
\end{equation}
 By the orthogonality, we know   $\Var\pp{M_a(X)} = \sum_{k=1}^\infty a^{2k} \|\pi_k(X)\|_2^2$, and hence the following monotonicity property holds 
\begin{equation}\label{eq:Mehler mono}
    \Var\pp{M_{a_1}(X)}\le \frac{a_1^2}{a_2^2} \Var\pp{M_{a_2}(X)}\le \Var\pp{M_{a_2}(X)} , \quad \text{for }0\le  |a_1|< |a_2|\le 1.
\end{equation}
Furthermore, we recall the following hypercontractivity property   \citep[Theorem 5.8]{janson1997gaussian}
\begin{equation}\label{eq:M_r hypercontr}
 \|M_a(X)\|_2\le \|X\|_{1+a^2}.    
\end{equation}

Below, we still take $H_1$ and $H_2$ to be Gaussian Hilbert subspaces of  $H$ and write $\rho=\rho(H_1,H_2)$.
The following result provides a relation between the operators $Q_{H_1H_2}$ and $M_a$.
\begin{Pro}\label{Pro:proj mehler}
Recall $Q_{H_1H_2}$ from \eqref{def:QH1H2} and $M_\rho$ from \eqref{eq:M_a}. Then, for any $X\in L^2(H_1)$, we have
\[
\|Q_{H_1H_2}(X)\|_2^2 \le \Var\pp{M_\rho(X)}.
\]
\end{Pro}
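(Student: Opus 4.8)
The plan is to work entirely at the level of the Wiener chaos decomposition, exploiting the orthogonality of distinct chaoses together with a per-chaos correlation contraction that is implicit in Proposition \ref{Pro:rho equal}.

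First I would record the structural facts that make the chaos bookkeeping work. For any Gaussian Hilbert subspace $G$ of $H$ one has $G^{:k:}\subseteq H^{:k:}$ for every $k\ge 0$ (a Wick power $:\xi^k:$ with $\xi\in G$ is orthogonal to every polynomial in $H$ of degree $<k$, and such Wick powers span $G^{:k:}$). Applying this with $G=H_1$, the $L^2(H)$-chaos components of $X\in L^2(H_1)$ satisfy $\pi_k(X)\in H_1^{:k:}$; applying it with $G=H_2$, the conditional expectation $\esp[X\mid\sigma(H_2)]\in L^2(H_2)$ has $L^2(H)$-chaos components $B_k\in H_2^{:k:}$, with $B_0=\esp[X]$, so that $Q_{H_1H_2}(X)=\sum_{k\ge 1}B_k$ and, by orthogonality of the $H^{:k:}$, $\|Q_{H_1H_2}(X)\|_2^2=\sum_{k\ge 1}\|B_k\|_2^2$. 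I may also assume $\esp[X]=0$, since recentering $X$ changes neither $Q_{H_1H_2}(X)$ nor $\Var(M_\rho(X))$.

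The quantitative input I would then invoke is the per-chaos contraction
\[
|\Cov(Z_1,Z_2)|\le \rho^k\,\|Z_1\|_2\,\|Z_2\|_2,\qquad Z_1\in H_1^{:k:},\ \ Z_2\in H_2^{:k:},
\]
which is precisely what appears in the proof of Proposition \ref{Pro:rho equal}: the canonical correlation between the $k$-th chaoses is at most $r(H_1,H_2)^k=\rho^k$ (see \citet[Theorem 10.11 and its proof]{janson1997gaussian}). With this in hand the argument is short. Using $\esp[X]=0$ and inserting the decomposition of $\esp[X\mid\sigma(H_2)]$ into the $B_j$'s, all cross terms with $j\neq k$ vanishing by chaos orthogonality and the $k=0$ term dropping because $\pi_0(X)=\esp[X]=0$, one gets $\|Q_{H_1H_2}(X)\|_2^2=\esp\big[X\,\esp[X\mid\sigma(H_2)]\big]=\sum_{k\ge 1}\esp\big[\pi_k(X)\,B_k\big]$. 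Bounding each term by $\rho^k\|\pi_k(X)\|_2\|B_k\|_2$ and applying Cauchy--Schwarz in $k$ yields
\[
\|Q_{H_1H_2}(X)\|_2^2\le \Big(\sum_{k\ge 1}\rho^{2k}\|\pi_k(X)\|_2^2\Big)^{1/2}\Big(\sum_{k\ge 1}\|B_k\|_2^2\Big)^{1/2}=\Var\big(M_\rho(X)\big)^{1/2}\,\|Q_{H_1H_2}(X)\|_2,
\]
since $\Var(M_\rho(X))=\sum_{k\ge 1}\rho^{2k}\|\pi_k(X)\|_2^2$ by the definition of $M_\rho$ and $\sum_{k\ge 1}\|B_k\|_2^2=\|Q_{H_1H_2}(X)\|_2^2$. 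Dividing through (the case $Q_{H_1H_2}(X)=0$ being trivial) gives the claim.

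The main obstacle is the per-chaos bound $r(H_1^{:k:},H_2^{:k:})\le\rho^k$: the plain maximal-correlation inequality only supplies the factor $\rho$ (i.e.\ the $k=1$ case), which is too weak, and the exponent $k$ genuinely relies on the tensor-power structure of the chaos spaces. One must be careful that a generic element of $H_1^{:k:}$ is not a single Wick power $:\xi^k:$ but only an $L^2$-limit of linear combinations of Wick powers, so a one-variable Hermite/Mehler computation does not transfer directly. If one prefers not to quote the proof of Proposition \ref{Pro:rho equal}, this bound can instead be obtained by identifying $\esp[\cdot\mid\sigma(H_2)]$ restricted to $H_1^{:k:}$ with the $k$-th symmetric tensor power of the projection $P_{H_1H_2}\colon H_1\to H_2$ under the Wiener--It\^o isometry, whose operator norm is $\|P_{H_1H_2}\|^k=\rho^k$ by Proposition \ref{Pro:rho equal}.
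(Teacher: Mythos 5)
Your proof is correct and rests on exactly the same key ingredient as the paper's: the identification of $\esp[\,\cdot\mid\sigma(H_2)]$ on the $k$-th chaos with the $k$-th symmetric tensor power of $P_{H_1H_2}$, giving the per-chaos contraction factor $\rho^k$ (Janson, Theorems 4.5 and 4.9). The only difference is cosmetic: you bound the bilinear form $\esp\bigl[X\,\esp[X\mid\sigma(H_2)]\bigr]$ chaos-by-chaos and finish with Cauchy--Schwarz in $k$, whereas the paper applies the operator-norm bound $\|P_{H_1H_2}^{:k:}\|=\rho^k$ directly to $\|Q_{H_1H_2}(X)\|_2^2=\sum_{k\ge1}\|P_{H_1H_2}^{:k:}\pi_k(X)\|_2^2$.
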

\begin{proof}
By \citet[Theorem 4.9]{janson1997gaussian},   we have
\[
Q_{H_1H_2}(X) =\sum_{k=1}^\infty P_{H_1H_2}^{:k:} \pi_k\pp{X},
\]
where $P_{H_1H_2}^{:k:} : H^{:k:}\to H^{:k:}$ is the operator as in \citet[Theorem 4.5]{janson1997gaussian}, i.e., the $k$th symmetric tensor power of $P_{H_1H_2}$. In view of \citet[Theorem 4.5]{janson1997gaussian} and Proposition \ref{Pro:rho equal}, the operator norm  $\|P_{H_1H_2}^{:k:}\|=\|P_{H_1H_2}\|^k=\rho^k$. Hence, by orthogonality of chaoses of different orders,
\[
\|Q_{H_1H_2}(X)\|_2^2\le \sum_{k=1}^\infty \rho^{2k} \|\pi_k\pp{X}\|_2^2 = \| M_\rho(X) -\esp M_\rho(X)\|_2^2 = \Var(M_\rho(X)).
\]
\end{proof}
 The following  improvement of H\"older's inequality based on the  hypercontractivity property    will play a key role in analyzing extremal dependence in Section \ref{sec:gaus exp dep} below. 
 
\begin{Pro}\label{Pro:hyper holder}
 For random variables $X_1$ and $X_2$ measurable with respect to $\sigma(H_1)$ and $\sigma(H_2)$ respectively, and $p,q\ge 1$ with $(p-1)(q-1)\ge  \rho^2$, where $\rho$ is the canonical correlation between $H_1$ and $H_2$, we have
 \[
 \esp \left| X_1 X_2  \right|\le \|X_1\|_p \|X_2\|_{q}.
 \]
 In particular, with the choice $p=q=1+\rho$, we have
 \begin{equation}\label{eq:hyper CS}
 \esp \left| X_1 X_2  \right|\le \|X_1\|_{1+\rho}\|X_2\|_{1+\rho}.
 \end{equation}
\end{Pro}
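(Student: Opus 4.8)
The plan is to reduce the claimed inequality to a combination of the standard H\"older inequality and the hypercontractive estimate from Proposition \ref{Pro:proj mehler}, passing through the conditional-expectation operator $Q_{H_1H_2}$. First I would dispose of trivial cases: if $\|X_1\|_p=\infty$ or $\|X_2\|_q=\infty$ there is nothing to prove, so assume both are finite; by replacing $X_i$ with $|X_i|$ we may assume $X_i\ge 0$. The key move is to write $\esp[X_1 X_2] = \esp\bigl[X_1\,\esp[X_2\mid \sigma(H_1)]\bigr]$ using the tower property and $\sigma(H_1)$-measurability of $X_1$, and then to note that $\esp[X_2\mid\sigma(H_1)]$ is, after centering, exactly $Q_{H_2H_1}(X_2)$ plus the constant $\esp X_2$. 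So the proof splits into bounding $\esp[X_1]\,\esp[X_2]$ (immediate, since $\|X_i\|_1\le\|X_i\|_{1+\rho}$ for the second form, or $\le\|X_i\|_p,\|X_i\|_q$ in general) and bounding $\esp[X_1\,Q_{H_2H_1}(X_2)]$.

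For the nontrivial term, apply H\"older's inequality in the form $\esp[X_1\,Q_{H_2H_1}(X_2)] \le \|X_1\|_p\,\|Q_{H_2H_1}(X_2)\|_{p'}$ where $p'=p/(p-1)$ is the conjugate exponent. The remaining task is to show $\|Q_{H_2H_1}(X_2)\|_{p'}\le \|X_2\|_q$. Here is where hypercontractivity enters: by Proposition \ref{Pro:proj mehler}, $\|Q_{H_2H_1}(X_2)\|_2\le \|M_\rho(X_2) - \esp M_\rho(X_2)\|_2$, but I actually need an $L^{p'}$ bound rather than an $L^2$ bound, so I would instead invoke the sharper operator-level statement: the operator norm $\|Q_{H_2H_1}\colon L^{q}\to L^{p'}\|$ is governed by the same inequality that underlies Nelson's hypercontractivity, namely it is bounded by $1$ precisely when $\rho^2\le (p-1)(q-1)$, equivalently $(q-1)\ge \rho^2/(p-1)=\rho^2(p'-1)$ — i.e. $\rho^2(p'-1)\le q-1$. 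One route is to decompose $Q_{H_2H_1} = \sum_{k\ge1}P^{:k:}_{H_2H_1}\pi_k$ as in the proof of Proposition \ref{Pro:proj mehler}, factor each $P^{:k:}_{H_2H_1}$ (of norm $\rho^k$) through a copy of the Mehler operator $M_{\rho}$, and apply \eqref{eq:M_r hypercontr} with the exponent pairing $(q,2)$ after first contracting with $M_{\sqrt{q-1}/\sqrt{p'-1}}$-type factors; the bookkeeping exponent identity to check is exactly $(p-1)(q-1)\ge\rho^2$. The ``in particular'' clause then follows by taking $p=q=1+\rho$, for which $(p-1)(q-1)=\rho^2$ with equality, and $p'=(1+\rho)/\rho$, and observing the symmetric bound $\esp|X_1X_2|\le\|X_1\|_{1+\rho}\|X_2\|_{1+\rho}$ drops out.

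The main obstacle is the passage from the $L^2\to L^2$ hypercontractive bound (which is all that Propositions \ref{Pro:rho equal}--\ref{Pro:proj mehler} literally give us) to the required $L^q\to L^{p'}$ mapping property of $Q_{H_1H_2}$ — this is genuinely the two-function sharp form of the hypercontractivity inequality for the conditional-expectation/Ornstein--Uhlenbeck semigroup, rather than its Dirichlet-form (variance) incarnation. I would handle this by citing the appropriate statement in \citet[Theorem 5.8 and Chapter 5]{janson1997gaussian}, where the hypercontractive estimate $\|M_a(X)\|_p\le\|X\|_q$ is available for the full range $0<a\le\sqrt{(q-1)/(p-1)}$, combined with the tensor-power identification of $P^{:k:}_{H_1H_2}$ and the fact that $Q_{H_1H_2}$ is dominated, chaos-block by chaos-block, by $M_\rho$. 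Once that input is in hand, everything else is the two short applications of H\"older described above, so no further delicate estimation is needed.
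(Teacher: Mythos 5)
Your overall strategy --- tower property, H\"older with conjugate exponents, then a hypercontractive bound on the conditional expectation operator --- is exactly the route behind the paper's proof, which is a one-line citation: the statement is \citet[Corollary 5.7]{janson1997gaussian} generalized from a pair of jointly Gaussian variables to a pair of Gaussian Hilbert spaces, with Proposition \ref{Pro:rho equal} supplying the identification $\|P_{H_1H_2}\|=\rho$. However, two steps in your write-up would fail as stated. First, the additive split $\esp[X_2\mid\sigma(H_1)]=Q_{H_2H_1}(X_2)+\esp X_2$ followed by bounding the two resulting terms separately cannot recover the constant $1$: you end up needing $\esp X_2+\|Q_{H_2H_1}(X_2)\|_{p'}\le \|X_2\|_q$, which is false in general (take $X_2=1+\epsilon Z$ with $Z$ standard Gaussian; the left side is $1+c\epsilon$ while the right side is $1+O(\epsilon^2)$). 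The fix is simply not to center: bound $\|\esp[X_2\mid\sigma(H_1)]\|_{p'}\le\|X_2\|_q$ for the whole conditional expectation, zeroth chaos included, and apply H\"older once.

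Second, and more substantively, the mechanism you propose for the $L^q\to L^{p'}$ estimate --- that $Q_{H_2H_1}$ is ``dominated, chaos-block by chaos-block, by $M_\rho$'' --- only controls $L^2$ norms, because the chaos decomposition is orthogonal in $L^2$ and nothing else; this is precisely why Proposition \ref{Pro:proj mehler} is stated as a variance bound. An $L^{p'}$ norm does not decompose over chaos blocks, so blockwise domination gives you nothing at exponent $p'\ne 2$. The input you actually need (and which the paper implicitly invokes) is the general operator form of Janson's hypercontractivity theorem: for a Hilbert-space contraction $T:H_1\to H_2$ with $\|T\|\le\sqrt{(q-1)/(p'-1)}$, the second quantization $\Gamma(T)$ is a contraction from $L^q$ to $L^{p'}$; the special case \eqref{eq:M_r hypercontr} quoted in the paper is $T=aI$. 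Applying this with $T=P_{H_2H_1}$, whose operator norm equals $\rho$ by Proposition \ref{Pro:rho equal}, and using \citet[Theorem 4.9]{janson1997gaussian} to identify $\Gamma(P_{H_2H_1})X_2=\esp[X_2\mid\sigma(H_1)]$, closes the argument; the exponent condition $\rho\le\sqrt{(q-1)(p-1)}$ is exactly $(p-1)(q-1)\ge\rho^2$. You do point to the right chapter of Janson for this, so the idea is within reach, but the proof as written does not yet assemble it correctly.
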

 The proof of Proposition \ref{Pro:hyper holder} follows from a direct generalization of that of \citet[Corollary 5.7]{janson1997gaussian} using Proposition \ref{Pro:rho equal}.

\subsection{Gaussian Subordination and Extremal Independence}\label{sec:gaus exp dep}

We discuss extremal independence for Gaussian subordination.  

Suppose $(Y_1,Y_2)'$ is a bivariate random vector with an identical  marginal distribution function $F(x)=\P\pp{Y_1\le x}=\P\pp{Y_2\le x}$, $x\in \R$.  Recall the upper end point of $F$ as $U_F=\sup\{x\in \R : \  F(x)<1 \}$. Assume $\lim_{x\uparrow U_F}F(x)=1$, that is, the marginal distribution is continuous at $U_F$. 
Then $Y_1$ and $Y_2$ are said to be \emph{extremally   independent} (or say asymptotically independent, or having zero tail dependence, in the upper tail), if  
\begin{equation}\label{eq:extr indep}
\lim_{x\uparrow U_F}\P(Y_1>x ~|~ Y_2>x)=\lim_{x\uparrow U_F}\frac{\P \pp{Y_1>x,\ Y_2>x}}{\widebar{F}(x)}    =0,
\end{equation}
where $\widebar{F}(x)=1-F(x)=\P \pp{Y_1>x}$.
Furthermore, we say that two random vectors, all components with an identical marginal {distribution} that is continuous at its upper end point, are extremally independent if pairwise extremal independence holds across the components of the two vectors.

Suppose now $(Y_1,Y_2)$ is bivariate normal with correlation $\rho\in (-1,1)$.  Then it is   known  that
\begin{equation}\label{eq:normal extr indep rate}
\P(Y_1>x, Y_2>x)\sim   c_\rho  \log\pp{1/\widebar{F}(x) }^{-\rho/(1+\rho)}  \widebar{F}(x)^{2/( 1+\rho )},
\end{equation}
as $x\rightarrow\infty$, for some constant $c_\rho>0$ depending on $\rho$. The relation \eqref{eq:normal extr indep rate} is a variant of \citet[Equation (5.1)]{ledford1996statistics}, which was stated with $Y_i$ replaced by $1/[-\log(F(Y_i))]$ (marginally transformed to standard Fr\'echet), once noticing that $-\log \pp{1-\widebar{F}(x)}\sim \widebar{F}(x)$ as $x\rightarrow\infty$.  Hence for $\rho\in (-1,1)$, it follows from \eqref{eq:normal extr indep rate}  that   the bivariate normal $(Y_1,Y_2)$  is extremally independent, with the rate of convergence in \eqref{eq:extr indep} of the order  $\log\pp{1/\widebar{F}(x) }^{-\rho/(1+\rho)}  \widebar{F}(x)^{(1-\rho)/( 1+\rho )}$.

Taking $(X_1,X_2)'$ to be a bivariate Gaussian   with correlation $\rho$ as above, but 
now  consider $Y_i=G(X_i)$ with a  measurable function $G:\R\to \R$, so that the distribution of $Y_i$ is continuous at its upper end point.  What can we say about the extremal dependence of $(Y_1,Y_2)'$? If $G$ is a strictly increasing function,  it is not difficult to see that extremal independence is preserved for $(Y_1,Y_2)'$.
The situation becomes no longer obvious in the absence of monotonicity. In fact, the conclusion remains the same: for an arbitrary transform $G$, we have extremal independence in $(Y_1,Y_2)'$  as long as the correlation $|\rho|<1$ for $(X_1,X_2)'$.  This follows from the following more general fact.
\begin{Cor}\label{Cor:extr indep gaus sub}
Let $H_1$ and $H_2$ be two Gaussian Hilbert spaces with  maximal correlation $\rho:=\rho(H_1,H_2)$  (see \eqref{eq:max corr}).  Suppose $Y_1$ and $Y_2$ are two random variables measurable with respect to $\sigma(H_1)$ and $\sigma(H_2)$ respectively, such that they share the same marginal distribution. Then, for any $x\in \R$,
\begin{equation}\label{eq:Gaus sub joint tail}
\P(Y_1>x, Y_2>x)\le  \P(Y_1>x)^{2/(1+|\rho|)}=\widebar{F}(x)^{2/(1+|\rho|)}.
\end{equation}
In particular, if two random vectors (with all the marginal distributions identical and continuous at the upper end point) are measurable with respect to $\sigma(H_1)$ and $\sigma(H_2)$ respectively, then they are extremally independent if $|\rho|<1$. 
\end{Cor}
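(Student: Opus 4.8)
The plan is to feed the indicator random variables $X_1=\indd{Y_1>x}$ and $X_2=\indd{Y_2>x}$ into the hypercontractive Cauchy--Schwarz inequality \eqref{eq:hyper CS}. Since $Y_1$ is $\sigma(H_1)$-measurable and $Y_2$ is $\sigma(H_2)$-measurable, the same holds for $X_1$ and $X_2$, so Proposition \ref{Pro:hyper holder} applies; note that by Proposition \ref{Pro:rho equal} the maximal correlation $\rho=\rho(H_1,H_2)$ agrees with the canonical correlation appearing there, and that a maximal correlation of Gaussian Hilbert spaces is automatically nonnegative, so $\rho=|\rho|$ and the absolute value in the statement is only cosmetic.

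First I would observe that $\esp|X_1X_2|=\P(Y_1>x,\,Y_2>x)$ and, because each $X_i$ is $\{0,1\}$-valued with the common marginal $F$, that $\|X_i\|_{1+|\rho|}=\P(Y_i>x)^{1/(1+|\rho|)}=\widebar F(x)^{1/(1+|\rho|)}$. Substituting these into \eqref{eq:hyper CS} gives \eqref{eq:Gaus sub joint tail} directly, and this holds for every $x\in\R$ (for $x\ge U_F$ both sides vanish). For the extremal independence assertion, I would divide \eqref{eq:Gaus sub joint tail} by $\widebar F(x)$, obtaining $\P(Y_1>x\mid Y_2>x)\le \widebar F(x)^{(1-|\rho|)/(1+|\rho|)}$; when $|\rho|<1$ the exponent is strictly positive, while $\widebar F(x)\to 0$ as $x\uparrow U_F$ by the assumed continuity of $F$ at its upper endpoint, so the right-hand side tends to $0$, which is \eqref{eq:extr indep}.

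Finally, for two random vectors whose components all share the marginal $F$ and are measurable with respect to $\sigma(H_1)$ and $\sigma(H_2)$ respectively, I would simply apply the bivariate statement to each pair of components $(Y_{1,a},Y_{2,b})$: these are again $\sigma(H_1)$- and $\sigma(H_2)$-measurable, so \eqref{eq:hyper CS} applies with the same $\rho$, yielding pairwise extremal independence, which by definition is what extremal independence of the two vectors means. There is no real obstacle in this argument --- all the analytic content is already carried by Proposition \ref{Pro:hyper holder} (and hence by the hypercontractivity bound \eqref{eq:M_r hypercontr}); the only point deserving a line of justification is the identification of maximal and canonical correlation via Proposition \ref{Pro:rho equal}, so that the hypercontractive bound is available in the form \eqref{eq:hyper CS}.
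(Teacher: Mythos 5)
Your proof is correct and is exactly the argument the paper has in mind: the paper's own proof of this corollary is the single line ``This follows directly from Proposition \ref{Pro:hyper holder},'' and your application of \eqref{eq:hyper CS} to the indicators $\ind\{Y_1>x\}$, $\ind\{Y_2>x\}$ is precisely the intended instantiation. Your supporting remarks (nonnegativity of the maximal correlation, the identification of maximal and canonical correlation via Proposition \ref{Pro:rho equal}, and the reduction of the vector case to pairwise componentwise bounds) are all accurate.
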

\begin{proof}
    This follows directly from Proposition \ref{Pro:hyper holder}.  
\end{proof}
It may be of interest to compare \eqref{eq:normal extr indep rate} with \eqref{eq:Gaus sub joint tail}.  We also mention the following specific consequence in a finite-dimensional setup.

\begin{Cor}\label{Cor:extr indep tran gaus}
Suppose that $X_1=(X_{1,1},\ldots,X_{p,1})'$ and $X_2=(X_{1,2},\ldots,X_{q,2})'$, $p,q\in \Z_+$, are  random vectors such that $(X_1,X_2)$ is jointly Gaussian, and no nontrivial linear combination of components of $X_1$ is   equal to any nontrivial linear combination of components of $X_2$. Suppose   measurable functions $G_1:\R^p\to\R$ and $G_2:\R^q \to \R$   are such that $G_1(X_1)$ shares the same distribution with $G_2(X_2)$  that is continuous at the upper end point.  Then, $G_1(X_1)$ and $G_2(X_2)$ are extremally independent.
\end{Cor}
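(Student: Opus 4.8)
We plan to derive the statement from Corollary~\ref{Cor:extr indep gaus sub} by exhibiting two Gaussian Hilbert spaces that carry $G_1(X_1)$ and $G_2(X_2)$ and whose maximal correlation is strictly less than one. Concretely, let $H_1$ and $H_2$ be the linear spans of the centered components $X_{i,1}-\esp X_{i,1}$, $i=1,\dots,p$, and $X_{j,2}-\esp X_{j,2}$, $j=1,\dots,q$, respectively. These are automatically closed (being finite-dimensional), and since $(X_1,X_2)$ is jointly Gaussian they are Gaussian Hilbert subspaces of a common Gaussian Hilbert space $H$; moreover $\sigma(H_i)=\sigma(X_i)$, so $G_i(X_i)$ is measurable with respect to $\sigma(H_i)$, $i=1,2$. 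Writing $\rho:=\rho(H_1,H_2)$ for the maximal correlation, once we establish $\rho<1$, Corollary~\ref{Cor:extr indep gaus sub}—together with the hypothesis that $G_1(X_1)$ and $G_2(X_2)$ share a common marginal law continuous at its upper end point—immediately yields that $G_1(X_1)$ and $G_2(X_2)$ are extremally independent.

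The core of the argument is thus the strict inequality $\rho<1$. By Proposition~\ref{Pro:rho equal}, $\rho$ coincides with the canonical correlation
\[
r(H_1,H_2)=\sup\bigl\{\,\Cov(U,V):U\in H_1,\ V\in H_2,\ \|U\|_2=\|V\|_2=1\,\bigr\},
\]
so it suffices to work with this smaller supremum. Because $H_1$ and $H_2$ are finite-dimensional, their $L^2$-unit spheres are compact and $(U,V)\mapsto\Cov(U,V)$ is continuous, hence the supremum is attained. If it equalled $1$, there would exist unit-variance $U\in H_1$ and $V\in H_2$ with $\Cov(U,V)=1$, and then
\[
\|U-V\|_2^2=\|U\|_2^2-2\Cov(U,V)+\|V\|_2^2=0,
\]
so $U=V$ almost surely. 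But $U$ is then a nontrivial (unit-variance) linear combination of the components of $X_1$ that agrees almost surely with a nontrivial linear combination of the components of $X_2$, contradicting the hypothesis. Therefore $r(H_1,H_2)<1$, hence $\rho<1$, and the proof concludes as described above.

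I expect the only delicate point to be the attainment of the supremum defining $r(H_1,H_2)$: finite-dimensionality is precisely what upgrades ``$r(H_1,H_2)=1$'' from an asymptotic statement into an exact almost-sure linear identity between an element of $H_1$ and an element of $H_2$, which is exactly what the non-degeneracy hypothesis forbids. The remaining ingredients—identifying $H_1$ and $H_2$ as Gaussian Hilbert spaces, the $\sigma(H_i)$-measurability of $G_i(X_i)$, the passage from maximal to canonical correlation via Proposition~\ref{Pro:rho equal}, and the final appeal to Corollary~\ref{Cor:extr indep gaus sub}—are routine.
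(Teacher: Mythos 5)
Your proposal is correct and follows essentially the same route as the paper: identify $H_1,H_2$ as the (finite-dimensional) Gaussian Hilbert spaces spanned by the components, invoke Proposition~\ref{Pro:rho equal} to pass between canonical and maximal correlation, and conclude via Corollary~\ref{Cor:extr indep gaus sub}. The only difference is that you spell out, via compactness of the unit spheres, why the non-degeneracy hypothesis forces $r(H_1,H_2)<1$ --- a step the paper asserts without detail --- and this added argument is sound.
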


\begin{proof} 
Let $H_1$ and $H_2$ be the Gaussian Hilbert spaces spanned by the components of $X_1$ and $X_2$ respectively. 
By the assumption, the canonical correlation (see \eqref{eq:can corr})     $r(H_1,H_2)<1$. The conclusion then follows from Proposition \ref{Pro:rho equal} and Corollary \ref{Cor:extr indep gaus sub}.
\end{proof}

As seen above, Proposition \ref{Pro:hyper holder}  is the key to establishing extremal independence. Below, we present yet another quick application of Proposition \ref{Pro:hyper holder}.
Recall the following condition of \cite{leadbetter1983extremes_paper}: for a stationary process $\{Y_k\}$ and a sequence $u_n$ satisfying $\lim_{n \to \infty}n \P\pp{Y_0>u_n}=\tau$ for some $\tau\in (0,\infty)$, we say the condition $D'(u_n)$ holds, if
    \begin{equation}\label{eq:D_n'}
    \lim_{k\rightarrow\infty}    \limsup_{n\rightarrow\infty}  n\sum_{j = 1}^{[n/k]} \proba\pp{Y_0>u_n, Y_j>u_n } =0;
    \end{equation}
     The condition is typically interpreted as an anti-clustering condition, which is famously known to imply the same  weak limit behavior of extremes of $Y_k$ as its i.i.d. counterpart, if combined with the mixing condition $D(u_n)$  also from \cite{leadbetter1983extremes_paper}; see   \cite{leadbetter1983extremes} as well. As shown in \citet[Section 4.4]{leadbetter1983extremes}, the condition $D'(u_n)$ holds if $Y_k$ is Gaussian with covariance $\Cov\pp{Y_k,Y_0}=o\pp{1/\log(k)}$ as $k\rightarrow\infty$. The following result shows that this fact remains true for any instantaneous transform of such a Gaussian sequence. 
\begin{Pro}\label{Pro:D'(u)}
 Suppose    $\{X_k\}$ is a stationary and marginally centered Gaussian process with covariance $\Cov\pp{X_k,X_0}=o\pp{1/\log(k)}$ as $k\rightarrow\infty$. Let $Y_k=G(X_k)$ for some measurable $G:\R\mapsto \R$ and assume $\lim_{n \to \infty} n \P\pp{Y_0>u_n}=\tau$ for some $\tau\in (0,\infty)$. Then the condition   $D'(u_n)$ in \eqref{eq:D_n'} holds for $\{Y_k\}$.
\end{Pro}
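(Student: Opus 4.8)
The plan is to reduce the $D'(u_n)$ bound for the subordinated sequence $\{Y_k\}$ to a quantitative estimate on the autocorrelations of $\{X_k\}$, using the hypercontractive tail bound of Corollary~\ref{Cor:extr indep gaus sub}. Write $\sigma^2=\Var(X_0)$ (we may assume $\sigma^2>0$, since otherwise $X_k\equiv 0$ a.s.\ and the hypothesis $n\P(Y_0>u_n)\to\tau>0$ cannot hold), set $r_j=\Cov(X_j,X_0)/\sigma^2$, the correlation between $X_0$ and $X_j$, and put $p_n=\P(Y_0>u_n)$, so that $np_n\to\tau$ and $0<p_n<1$ for all large $n$. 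For each $j\ge 1$, the variables $Y_0=G(X_0)$ and $Y_j=G(X_j)$ are measurable with respect to the (one-dimensional) Gaussian Hilbert spaces spanned by $X_0$ and $X_j$, whose maximal correlation equals $|r_j|$ by Proposition~\ref{Pro:rho equal}, and $Y_0\eqinlaw Y_j$ by stationarity; hence Corollary~\ref{Cor:extr indep gaus sub} yields
\[
\P\pp{Y_0>u_n,\ Y_j>u_n}\le p_n^{2/(1+|r_j|)}.
\]
Thus it suffices to prove $\lim_{k\to\infty}\limsup_{n\to\infty}\, n\sum_{j=1}^{[n/k]} p_n^{2/(1+|r_j|)}=0$, which is precisely the estimate behind the classical fact that a Gaussian sequence with $o(1/\log k)$ covariance decay satisfies $D'(u_n)$ (cf.\ \citet[Section 4.4]{leadbetter1983extremes}), only now the transform $G$ has disappeared from the problem.

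Next I would carry out the counting. First note $\bar\rho:=\sup_{j\ge1}|r_j|<1$: if $|r_j|=1$ then $X_j=\pm X_0$ a.s., whence $X_{2j}=X_0$ a.s.\ and $\Cov(X_{2jm},X_0)=\sigma^2$ for every $m$, contradicting $\Cov(X_h,X_0)=o(1/\log h)$; and since $|r_j|\to 0$, only finitely many lags have $|r_j|\ge\tfrac12$, so $\bar\rho<1$. Fix $s$ with $0<s<(1-\bar\rho)/(1+\bar\rho)$ and split the sum at $j=n^{s}$. For the short-lag part $1\le j< n^{s}$, bound each term crudely by $p_n^{2/(1+\bar\rho)}$ (using $|r_j|\le\bar\rho$ and $0<p_n<1$); there are at most $n^{s}$ of them and $p_n^{2/(1+\bar\rho)}$ is of order $n^{-2/(1+\bar\rho)}$, so this part is $O\!\pp{n^{\,1+s-2/(1+\bar\rho)}}\to 0$ by the choice of $s$. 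For the long-lag part $n^{s}\le j\le n/k$, use $|r_j|=o(1/\log j)$ together with $\log j\ge s\log n$ and $\log(1/p_n)\sim\log n$ to conclude $|r_j|\log(1/p_n)\to 0$ uniformly over this range; hence $p_n^{2/(1+|r_j|)}\le p_n^{2-2|r_j|}=p_n^{2}\,p_n^{-2|r_j|}\le (1+o(1))\,p_n^{2}$, so this part is at most $(1+o(1))\,n\cdot(n/k)\,p_n^{2}=(1+o(1))(np_n)^2/k\to \tau^2/k$ as $n\to\infty$, which vanishes as $k\to\infty$. Adding the two parts finishes the proof.

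The one genuinely delicate step is the short-lag part. There the covariance need not be small, so the hypercontractive bound only gives the exponent $2/(1+\bar\rho)$, which exceeds $1$ but only barely; the gain relative to the trivial bound $\P(Y_0>u_n)$ must come entirely from the number of such lags being a small power $n^{s}$ of $n$, and this forces the strict inequalities $\bar\rho<1$ (uniformly in $j$) and $s<(1-\bar\rho)/(1+\bar\rho)$. The remaining ingredients --- the reduction via Corollary~\ref{Cor:extr indep gaus sub} and Proposition~\ref{Pro:rho equal}, the elementary inequality $2/(1+x)\ge 2-2x$ on $[0,1)$, and the asymptotics $np_n\to\tau$ and $\log(1/p_n)\sim\log n$ --- are routine.
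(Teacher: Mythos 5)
Your proposal is correct and follows essentially the same route as the paper: the hypercontractive H\"older bound $\P(Y_0>u_n,\,Y_j>u_n)\le \P(Y_0>u_n)^{2/(1+|r_j|)}$ (Proposition~\ref{Pro:hyper holder} / Corollary~\ref{Cor:extr indep gaus sub}), followed by a short-lag/long-lag split that exploits $\sup_j|r_j|<1$ for the few small lags and $|r_j|=o(1/\log j)$ for the rest. The only difference is presentational: where the paper converts the sum to an integral of $f_n^*$ and invokes dominated convergence with a truncation at $x=n^{-\beta_0}$ (as in the treatment of $f_n$ in Lemma~\ref{Lem:key}), you carry out the equivalent counting explicitly at the cutoff $j=n^{s}$, and you also spell out the argument that $\sup_{j\ge 1}|r_j|<1$, which the paper leaves implicit in this proof.
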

The proof of this proposition can be found in Section \ref{sec:pf D'} below.

\subsection{$m$-extremal dependence}\label{sec:m-extremal-dep}

{
Corollary \ref{Cor:extr indep tran gaus} shows that one cannot generate extremal dependence from two
correlated Gaussian vectors unless there is a deterministic linear dependence between them. On the other
hand, in the $(m+1)$-moving-window Gaussian subordination model \eqref{eq:gaus sub Y multi}, extremal
dependence may occur at short lags because of the overlap in the underlying Gaussian inputs. This  
naturally motivates the following notion, which is weaker than the classical $m$-dependence and tailored to extremes.
}

{
Recall that an $\R^d$-valued stationary process $\{Y_k\}_{k\in\Z}$ is called $m$-dependent if
$\sigma(Y_i,\ i\le 0)$ and $\sigma(Y_i,\ i\ge m+1)$ are independent. For instance, this holds in
\eqref{eq:gaus sub Y multi} when the latent process $\{X_i\}$ is i.i.d. In general, however, $m$-dependence
fails as soon as temporal dependence is present in $\{X_i\}$. For our study of extremes, it is enough to require that
observations separated by more than $m$ time units be extremally independent.
}

{
\begin{definition}
Let $\{Y_k=(Y_{k,1},\dots,Y_{k,d})'\}_{k\in\Z}$ be a stationary process in $\R^d$. We say that $\{Y_k\}$
is \emph{$m$-extremally dependent} if for every $k\le 0$, $\ell\ge m+1$, and every $i,j\in\{1,\dots,d\}$,
the pair $(Y_{k,i},Y_{\ell,j})'$ is extremally independent in the sense of \eqref{eq:extr indep}.
\end{definition}
}

We first present a lower bound for the extremal index Definition \ref{Def:ext index multi} under $m$-extremal-dependence. 
\begin{Pro}\label{Pro:m extre dep}
Let $\{Y_k\}$ and $u_n(\tau)$ be as  in Definition \ref{Def:ext index multi}, and suppose that $\{Y_k\}$ is $m$-extremally-dependent and   the limit 
\begin{equation}\label{eq:ext index m-dep}
\theta_m(\tau)=\lim_{n \to \infty}  \P\pp{Y_{1}\le u_n(\tau),\ldots,Y_{m}\le u_n(\tau) \mid Y_{0}\nleq u_{n}(\tau)}
\end{equation}
   exists for all $\tau=(\tau_1,\ldots,\tau_d)'\in (0,\infty)^d$.
 Then for any $\ell> m$, we have $$\theta_\ell (\tau)=\theta_{m}(\tau) \ge \frac{1}{m+1}.$$
\end{Pro}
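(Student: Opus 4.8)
The plan is to establish the two assertions separately: first the stabilization $\theta_\ell(\tau) = \theta_m(\tau)$ for all $\ell > m$, and then the lower bound $\theta_m(\tau) \ge 1/(m+1)$. Throughout, write $A_k = \{Y_k \nleq u_n(\tau)\}$ for the event that $Y_k$ exceeds the threshold in at least one coordinate, so that $\theta_\ell(\tau) = \lim_n \P(A_1^c \cap \cdots \cap A_\ell^c \mid A_0)$, provided the limit exists. The key structural input is $m$-extremal dependence: for $k \le 0$ and $\ell \ge m+1$, every pair $(Y_{k,i}, Y_{\ell,j})$ is extremally independent, hence $\P(Y_{k,i} > u_{n,i}(\tau_i),\, Y_{\ell,j} > u_{n,j}(\tau_j)) = o(\widebar F_i(u_{n,i})) = o(1/n)$ by the marginal condition \eqref{eq:tailbehavemulti_1}. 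Summing over the finitely many pairs $i,j \in \{1,\dots,d\}$, this gives $n\,\P(A_k \cap A_\ell) \to 0$ whenever $\ell - k > m$.

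For the stabilization, fix $\ell > m$ and compare $\P(A_1^c \cap \cdots \cap A_\ell^c \mid A_0)$ with $\P(A_1^c \cap \cdots \cap A_m^c \mid A_0)$. Their difference in the numerator (before dividing by $\P(A_0)$) is $\P(A_0 \cap A_1^c \cap \cdots \cap A_m^c \cap (A_{m+1} \cup \cdots \cup A_\ell))$, which is bounded above by $\sum_{k=m+1}^{\ell} \P(A_0 \cap A_k) \le \sum_{k=m+1}^\ell \P(A_0 \cap A_k)$. Each term is $o(1/n)$ by the previous paragraph (since $k - 0 = k \ge m+1 > m$), and $\ell$ is fixed, so the total is $o(1/n)$. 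Dividing by $\P(A_0) \sim \P(Y_0 \nleq u_n(\tau))/1$, which is of exact order $1/n$ by \eqref{eq:limit G(tau)} (more precisely bounded below by $\max_i \P(Y_{0,i} > u_{n,i}) \sim \tau_i/n$), we conclude the two conditional probabilities differ by $o(1)$, so their limits coincide: $\theta_\ell(\tau) = \theta_m(\tau)$ exists for every $\ell > m$.

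For the lower bound, I would argue by a union/inclusion–exclusion estimate on disjoint-index translates. The idea is that $m$-extremal dependence forces exceedances at time-lags exceeding $m$ to be asymptotically uncorrelated, so a ``cluster'' centered at $0$ can overlap at most $m$ of its neighbors on each side; averaging this gives a cluster-size bound. Concretely, for a block of length $L = k(m+1)$ consider the times $0, 1, \dots, L-1$ and the sub-collection $S = \{0, m+1, 2(m+1), \dots, (k-1)(m+1)\}$ of $k$ indices that are pairwise more than $m$ apart. By Bonferroni, $\P(\bigcup_{j=0}^{L-1} A_j) \ge \P(\bigcup_{s \in S} A_s) \ge \sum_{s\in S}\P(A_s) - \sum_{s \ne s' \in S} \P(A_s \cap A_{s'})$; the double sum is $o(1/n)$ per term by $m$-extremal dependence and there are $O(1)$ such terms, so $\P(\bigcup_{j=0}^{L-1} A_j) \ge k\,\P(A_0)(1 + o(1))$. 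On the other hand, writing the expected number of exceedances in the block over its probability, one relates $\P(\bigcup_{j=0}^{L-1}A_j)$ to $L\,\P(A_0)\,\theta_{\text{block}}$, where the blockwise extremal index is captured by $\theta_m(\tau)$ in the limit via the standard Chernick–O'Brien type identity $\P(\bigcup_{j=0}^{L-1}A_j) = \sum_{j=0}^{L-1}\P(A_j \cap A_{j+1}^c \cap \cdots \cap A_{L-1}^c)$ together with stationarity and the fact that the conditional probabilities $\P(A_{j+1}^c \cap \cdots \cap A_{L-1}^c \mid A_j)$ converge to $\theta_{L-1-j}(\tau) = \theta_m(\tau)$ for all lags $> m$. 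Combining the upper bound $\P(\bigcup_{j=0}^{L-1}A_j) \le L\,\P(A_0)\,\theta_m(\tau)(1+o(1))$ with the Bonferroni lower bound $k\,\P(A_0)(1+o(1))$ and dividing by $L\,\P(A_0) = k(m+1)\P(A_0)$ yields $\theta_m(\tau) \ge k/L = 1/(m+1)$ in the limit.

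The main obstacle I expect is the bookkeeping in the last step: making the blockwise identity $\P(\bigcup_{j=0}^{L-1} A_j) = \sum_{j} \P(A_j \cap A_{j+1}^c \cap \cdots \cap A_{L-1}^c)$ interact cleanly with the limits defining $\theta_m(\tau)$, because for small lags ($\le m$) the conditional probabilities are $\theta_0(\tau), \dots, \theta_{m-1}(\tau)$ rather than $\theta_m(\tau)$, so one must check that the ``edge effects'' of the block contribute only $O(1)$ boundary terms (hence $o(L)$) and wash out after dividing by $L$. One must also verify that all the $o(1/n)$ error terms, of which there are $O(k^2)$ but with $k$ fixed, genuinely sum to $o(1/n)$ before dividing by $\P(A_0) \asymp 1/n$; this is fine since $k$ and $m$ are fixed, but it is worth stating explicitly. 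Everything else — the reduction of pairwise co-exceedance probabilities to $o(1/n)$ via Corollary \ref{Cor:extr indep gaus sub}-type extremal independence, and the use of \eqref{eq:tailbehavemulti_1} and \eqref{eq:limit G(tau)} to pin down the order of $\P(A_0)$ — is routine.
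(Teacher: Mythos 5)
Your argument is correct. The stabilization step $\theta_\ell(\tau)=\theta_m(\tau)$ is essentially identical to the paper's: both bound the difference of the numerators by $\sum_{k=m+1}^{\ell}\P(Y_0\nleq u_n(\tau),\,Y_k\nleq u_n(\tau))=o(1/n)$ and divide by $\P(Y_0\nleq u_n(\tau))\asymp 1/n$. For the lower bound, however, you take a genuinely different route. The paper applies a last-exceedance decomposition directly to the single event $\{Y_0\nleq u_n(\tau)\}$ over the fixed window $\{0,\dots,2m\}$: the terms with last exceedance index $j\le m$ are each, after stationarity and discarding $o(1/n)$ corrections at lags exceeding $m$, equal to $\P(Y_0\nleq u_n(\tau),\,Y_1\le u_n(\tau),\dots,Y_m\le u_n(\tau))+o(1/n)$, while the terms with $j>m$ are $o(1/n)$ outright; this yields $1\le (m+1)\theta_m(\tau)+o(1)$ in one shot, with no auxiliary block length. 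You instead play the O'Brien--Chernick identity on a block of length $L=k(m+1)$ against a Bonferroni lower bound over $k$ indices spaced $m+1$ apart, which forces a second limit $k\to\infty$ and the edge-effect bookkeeping you flag yourself: the correct upper bound is $(L-m-1)\P(A_0)[\theta_m(\tau)+o(1)]+(m+1)\P(A_0)$ rather than $L\,\P(A_0)\,\theta_m(\tau)(1+o(1))$ as written, since the boundary conditional probabilities converge (when they converge at all --- for lags below $m$ their limits are not guaranteed to exist under the hypotheses) to values at least $\theta_m(\tau)$, not at most, and should simply be bounded by $1$. With that correction one obtains $\theta_m(\tau)\ge (k-m-1)/\big((k-1)(m+1)\big)$ for each fixed $k$ and hence $\theta_m(\tau)\ge 1/(m+1)$ as $k\to\infty$. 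Both proofs rest on the same two ingredients --- a last-exceedance decomposition plus the $o(1/n)$ co-exceedance bound at lags exceeding $m$ --- but the paper's choice of the window $\{0,\dots,2m\}$ makes the second limit and the edge terms unnecessary.
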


\begin{proof}
We decompose the event $ \{ Y_{0}\nleq u_{n}(\tau) \}$   by the last exceedance time of $Y_i$ along $i=0,\ldots,2m $ as follows:  
\begin{align}
    &
    \P \left( Y_{0}\nleq u_{n}(\tau)\right)\notag
    \\&=
    \sum_{j=0}^{2m } \P\pp{ Y_{0}\nleq u_{n}(\tau),  
    Y_{j}\nleq u_{n}(\tau),Y_{j+1}\leq u_{n}(\tau),\ldots, Y_{2m}\leq u_{n}(\tau) } \notag 
    \\ &\le
  \sum_{j=0}^{m} \P\left(Y_j\nleq u_n(\tau), Y_{j+1}\leq u_{n}(\tau),\ldots, Y_{2m}\leq u_{n}(\tau)\right)  +\sum_{r=m+1}^{2m}\P\pp{Y_{0}\nleq u_{n}(\tau), Y_{r}\nleq u_{n}(\tau)}\label{eq:D decomp}
  \end{align}
  Note that by stationarity, the $j$th term, $j=0,\ldots,m$, in the first sum in \eqref{eq:D decomp} is equal to
  \begin{align}
      &\P\left(Y_0\nleq u_n(\tau), Y_{1}\leq u_{n}(\tau),\ldots, Y_{2m-j}\leq u_{n}(\tau)\right) 
      = \P\left(Y_0\nleq u_n(\tau), Y_{1}\leq u_{n}(\tau),\ldots, Y_{m}\leq u_{n}(\tau)\right)\\& \quad - \P\left(\bigcup_{l=m+1}^{2m-j} \pc{ Y_0\nleq u_n(\tau),  Y_{1}\leq u_{n}(\tau),\ldots, Y_{m}\leq u_{n}(\tau), Y_{\ell}\nleq u_n(\tau)} \right).\label{eq:Y_0 Y_m}
  \end{align}
   Note that by $m$-extremal-dependence, we have  for $k\ge m+1$ that
   \[
   \P(Y_0\nleq u_n(\tau), Y_{k}\nleq u_n(\tau) )=o\pp{ 1/n},
   \]
   as $n\rightarrow\infty$.  Combining the above relation with \eqref{eq:D decomp}, \eqref{eq:Y_0 Y_m} and the fact that 
   \[
   \P \left(  Y_{0}\nleq u_{n}\right)\ge \P(Y_{0,1}>u_{n,1}(\tau_1))\sim \frac{\tau_1}{n}, \quad \tau_1>0,
   \]
   in view of \eqref{eq:tailbehavemulti_1}, we infer that
\begin{align*}
1 & \le  (m+1) \P\pp{Y_{1}\le u_n(\tau),\ldots,Y_{m}\le u_n(\tau) \mid Y_{0}\nleq u_{n}(\tau)} + o(1),\quad n\rightarrow\infty.
  \end{align*}
 The inequality in the conclusion then follows.  To obtain the equality $\theta_\ell(\tau)=\theta_m(\tau)$, $\ell> m$,  apply an argument   similar to \eqref{eq:Y_0 Y_m} and   $m$-extremal-independence  to conclude
 \begin{align*}
      \P\pp{Y_0\nleq u_n(\tau), Y_1\nleq u_n(\tau),\ldots,Y_\ell\nleq u_n(\tau) } 
     =\P\pp{Y_0\nleq u_n(\tau), Y_1\nleq u_n(\tau),\ldots,Y_m\nleq u_n(\tau) }+o(1/n).
 \end{align*}
\end{proof}

The lower bound $\theta_m(\tau)=1/(m+1)$ is achievable with a moving maxima process  of order $m$ with equal  coefficients; see \citet[Example 10.5]{beirlant2006statistics}. 
 \begin{Rem}
In view of \cite{newell1964asymptotic},  under  $m$-dependence (in the usual sense) in the univariate case, 
 $\theta_m$ in \eqref{eq:ext index m-dep}  matches the extremal index $\theta$  if the limit in \eqref{eq:ext index m-dep} exists. The match $\theta_m=\theta$ is  also known to hold when $\{Y_i\}$ is $m$-extremally-dependent regularly varying process  and satisfies certain mixing-type condition; see, e.g., \citet[Section 7.5]{kulik20heavy}. 
 As will be shown in Theorem \ref{Thm:max} below,  we also have $\theta_m=\theta$     under the Gaussian subordination model \eqref{eq:gaus sub Y multi} with suitable assumptions on the Gaussian process.
\end{Rem}

\begin{Pro}\label{Pro:suff MEV}
A sufficient condition for the limit in \eqref{eq:ext index m-dep} to exist is that the joint distribution of $(Y_0',\ldots
,Y_m')'\in \R^{d(m+1)}$ belongs to the   max-domain of attraction of a    multivariate extreme value distribution.
\end{Pro}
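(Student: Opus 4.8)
The plan is to write the conditional probability in \eqref{eq:ext index m-dep} as $\P(N_n)/\P(D_n)$, where $D_n=\{Y_0\nleq u_n(\tau)\}$ and $N_n=\{Y_0\nleq u_n(\tau)\}\cap\bigcap_{j=1}^m\{Y_j\le u_n(\tau)\}$, and to reduce the existence of the limit to that of a handful of block-exceedance probabilities. Observe that $\bigcup_{j=1}^m\{Y_j\nleq u_n(\tau)\}\subseteq\bigcup_{j=0}^m\{Y_j\nleq u_n(\tau)\}$ and that the set-difference of these two events is exactly $N_n$; hence
\[
\P(N_n)=\P\left(\bigcup_{j=0}^m\{Y_j\nleq u_n(\tau)\}\right)-\P\left(\bigcup_{j=1}^m\{Y_j\nleq u_n(\tau)\}\right).
\]
The first probability on the right is the probability that the block $(Y_0',\ldots,Y_m')'\in\R^{d(m+1)}$ exceeds, in at least one coordinate, the threshold formed by stacking $m+1$ copies of $u_n(\tau)$; the second is the analogous quantity for the sub-block $(Y_1',\ldots,Y_m')'$; and $\P(D_n)$ is the analogous quantity for $Y_0$. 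Thus it suffices to show that $n$ times each of these three exceedance probabilities converges, with the limit for $\P(D_n)$ strictly positive and finite.

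For this I would invoke the hypothesis that the block $Z:=(Y_0',\ldots,Y_m')'$ lies in the max-domain of attraction of a multivariate extreme value distribution $H=e^{-V}$, with $V$ the exponent function. By the standard closure property of max-domains of attraction under taking sub-vectors, each of $Z$, $(Y_1',\ldots,Y_m')'$ and $Y_0$ lies in the max-domain of attraction of the corresponding (non-degenerate) marginal of $H$. For any such sub-block, the marginal normalization \eqref{eq:tailbehavemulti_1} forces the thresholds $u_{n,i}(\tau_i)$ to be tail-equivalent to the affine normalizing sequences supplied by the domain-of-attraction condition --- this is the usual locally-uniform-convergence/regular-variation argument for normalized tails --- so $n$ times the probability of exceeding the stacked threshold converges to the value of the exponent function of that sub-block at the point determined by $\tau$. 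Denoting these limits by $L_1$ (for $Z$), $L_2$ (for $(Y_1',\ldots,Y_m')'$) and $L_0$ (for $Y_0$), all are finite, $L_1\ge L_2$ because the second event is contained in the first, and $\tau_1\le L_0\le\tau_1+\cdots+\tau_d$ by a union bound together with \eqref{eq:tailbehavemulti_1}, so $L_0\in(0,\infty)$. Consequently
\[
\theta_m(\tau)=\lim_{n\to\infty}\frac{\P(N_n)}{\P(D_n)}=\frac{L_1-L_2}{L_0}\in[0,\infty)
\]
exists, which is the claim; the value $0$ occurs precisely when $L_1=L_2$.

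The step I expect to take the most care is the one matching the marginally-normalized thresholds $u_n(\tau)$ of \eqref{eq:tailbehavemulti_1} to the affine normalization underlying the multivariate domain-of-attraction condition, so that the three scaled block-exceedance probabilities genuinely converge to the intended values of $V$. This is the classical fact that, under a (univariate or multivariate) max-domain-of-attraction condition, the appropriately normalized tail converges locally uniformly, and it is exactly where the hypothesis is used. The remaining ingredients --- the set-theoretic decomposition above, the closure of max-domains of attraction under taking sub-vectors, and the bookkeeping of the limiting constants --- are routine.
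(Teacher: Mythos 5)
Your proposal is correct and follows essentially the same route as the paper: both reduce the claim to the existence (and positivity of the denominator limit) of $n$ times the relevant block-exceedance probabilities and obtain these from the multivariate max-domain-of-attraction hypothesis via the exponent measure. The only notable difference is the threshold-matching step, where the paper first works with the canonical choice $u_{n,i}(\tau_i)=F_i^{\leftarrow}(1-\tau_i/n)$ and then transfers to a general sequence satisfying \eqref{eq:tailbehavemulti_1} via a symmetric-difference bound, whereas you appeal to locally uniform convergence of the normalized tails; both are standard and valid.
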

\begin{proof}
The conclusion follows if the limits 
\begin{equation}\label{eq:num limit EVD domain}
\lim_{n \to \infty} n  \P\pp{Y_{1}\le u_n(\tau),\ldots,Y_{m}\le u_n(\tau) , Y_{0}\nleq u_{n}(\tau)}
\end{equation}
and 
\begin{equation}\label{eq:den limit EVD domain}
\lim_{n \to \infty} n  \P\pp{ Y_{0}\nleq u_{n}(\tau)}
\end{equation}
exist and are positive. In view of \citet[Theorem 6.1.5]{deHaanFerreira2006} (see also \citet[Chapter 5]{Resnick1987extreme}),
 this is indeed the case and the limits can be expressed through the exponent measures of  $(Y_0',\ldots
,Y_m')'$ and $Y_0$ respectively,  when $u_{n,i}(\tau_i)=F_i^{\leftarrow}(1-\tau_i/n)$ (for sufficiently large $n$), where $F_i^{\leftarrow}$ is the left-continuous inverse of the CDF $F_i$ of $Y_{0,i}$, $i=1,\ldots,d$. Note that $ 1-F_i(u_{n,i}(\tau_i)) \le  \frac{\tau_i}{n} \le  1-F_i(u_{n,i}(\tau_i)-)  $, and hence $ \lim_{n \to \infty} n \P( Y_{0,i} > u_{n,i}(\tau_i)) = \tau_i$ in view of \eqref{eq:upper end reg}.

We claim that both limits \eqref{eq:num limit EVD domain} and \eqref{eq:den limit EVD domain} then extend to more general  $\wt{u}_n(\tau)=(\wt{u}_{n,1}(\tau_1),\ldots,\wt{u}_{n,d}(\tau_d))'$ that satisfies the relation $\lim_{n \to \infty} n \P( Y_{0,i} > \wt{u}_{n,i}(\tau_i)) = \tau_i$, $i=1,\ldots,d$. Indeed, for the limit \eqref{eq:den limit EVD domain},    by the fact that  $|\P(\cup_{i=1}^d A_i)-\P(\cup_{i=1}^d B_i)|\le \P\pp{ \pp{\cup_{i=1}^d A_i}\Delta \pp{\cup_{i=1}^d B_i}  }\le \sum_{i=1}^d \P\pp{A_i\Delta B_i}$, and the fact that $\P(A\Delta B)=|\P(A)-\P(B)|$ if $A\subset B$ or $B\subset A$, we have
\begin{align*}
n|\P\pp{ Y_{0}\nleq u_{n}(\tau)}-\P\pp{ Y_{0}\nleq \wt{u}_{n}(\tau)}|&\le \sum_{i=1}^d n\P \pp{ \pc{ Y_{0,i} > u_{n,i}(\tau_i)} \Delta  \pc{ Y_{0,i} >  \wt{u}_{n,i}(\tau_i)} }\\
&=\sum_{i=1}^d \left|n\P \pp{ \pc{ Y_{0,i} >  u_{n,i}(\tau_i)}}  -n\P\pp{ \pc{ Y_{0,i} >  \wt{u}_{n,i}(\tau_i)} }\right|\rightarrow 0, 
\end{align*}
as $n\rightarrow\infty$. The limit \eqref{eq:num limit EVD domain} can be handled similarly.
\end{proof}

Now we apply Corollary \ref{Cor:extr indep tran gaus} to a context of  stationary process.

\begin{Pro}\label{Pro:m ext dep gaus sub multi}
  Suppose $\{X_k\}$ is $d$-dimensional centered stationary Gaussian process  with a covariance matrix function $\Gamma(h):=\esp X_h X'_0$ and assume the causal representation \eqref{equality_general_linear_process}, $\sum_r \|\Psi_r\|^2<\infty$, and $\Psi_0$ nonsingular. Let  the subordinated process $\{Y_k\}$ be given as in \eqref{eq:gaus sub Y multi} and suppose that each marginal distribution  of $Y_k$ is continuous at the upper end point.    Then   $\{Y_k\}$   is $m$-extremally-dependent. 
\end{Pro}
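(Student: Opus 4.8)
The plan is to reduce the $m$-extremal-independence of $\{Y_k\}$ to an application of Corollary \ref{Cor:extr indep tran gaus}. Fix a lag $h \ge m+1$ and two coordinates $i,j \in \{1,\ldots,d\}$. The random variables $Y_{0,i}$ and $Y_{h,j}$ are of the form $Y_{0,i} = G_i(X_0,X_{-1},\ldots,X_{-m})$ and $Y_{h,j} = G_j(X_h,X_{h-1},\ldots,X_{h-m})$. Because $h \ge m+1$, the two index blocks $\{-m,\ldots,0\}$ and $\{h-m,\ldots,h\}$ are disjoint, so $Y_{0,i}$ is a measurable function of the Gaussian vector $\mathbf{X}^{(1)} := (X_0,\ldots,X_{-m})$ and $Y_{h,j}$ is a measurable function of the Gaussian vector $\mathbf{X}^{(2)} := (X_h,\ldots,X_{h-m})$, and $(\mathbf{X}^{(1)},\mathbf{X}^{(2)})$ is jointly Gaussian by assumption.

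The key step is to verify the hypothesis of Corollary \ref{Cor:extr indep tran gaus}: no nontrivial linear combination of the components of $\mathbf{X}^{(1)}$ equals a nontrivial linear combination of the components of $\mathbf{X}^{(2)}$. Equivalently, I must show that the Gaussian Hilbert spaces $H_1 := \overline{\mathrm{span}}\{X_{s,r} : -m \le s \le 0,\ 1 \le r \le d\}$ and $H_2 := \overline{\mathrm{span}}\{X_{s,r} : h-m \le s \le h,\ 1 \le r \le d\}$ intersect only in $\{0\}$; in fact it suffices to rule out the existence of a common nonzero element, i.e.\ a random variable $Z \in H_1 \cap H_2$ with $Z \ne 0$. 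I would argue this from the moving-average representation \eqref{equality_general_linear_process}: any $Z \in H_1$ lies in $\overline{\mathrm{span}}\{\varepsilon_{s,r} : s \le 0\}$ (since $X_s$ for $s \le 0$ depends only on innovations $\varepsilon_{s-l}$, $l \ge 0$), whereas any $Z \in H_2$, being a limit of linear combinations of $X_h,\ldots,X_{h-m}$, has a nonzero projection onto $\mathrm{span}\{\varepsilon_{s,r} : s \ge h-m \ge 1\}$ as soon as $Z \ne 0$ — this is where the full-rank condition \eqref{eq:nonsingular} on $\Gamma(0)$ enters, since it guarantees $X_h$ genuinely loads on $\varepsilon_h$ (the leading matrix $\Psi_0$ must have full rank, as $\Gamma(0) = \sum_l \Psi_l\Psi_l'$ full rank forces... actually one only needs that $X_{h-m},\ldots,X_h$ together load nontrivially on $\varepsilon_{h-m},\ldots,\varepsilon_h$, which follows from the representation). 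Hence $H_1 \subseteq \overline{\mathrm{span}}\{\varepsilon_{s,r}: s \le 0\}$ and $H_2 \cap \overline{\mathrm{span}}\{\varepsilon_{s,r}: s \le 0\} = \{0\}$, so $H_1 \cap H_2 = \{0\}$, which is exactly the required condition, meaning the canonical correlation $r(H_1,H_2) < 1$.

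Actually, I realize the stated hypothesis of Proposition \ref{Pro:m ext dep gaus sub multi} is weaker than \eqref{equality_general_linear_process}: it only assumes $\Gamma_{jj}(0)$ nonsingular and $\Gamma_{jj}(h) \to 0$ entrywise, with no linear-process representation. So the cleaner route is: the condition "no common nontrivial linear combination" is equivalent to "the joint covariance matrix of $(\mathbf{X}^{(1)},\mathbf{X}^{(2)})$ has the property that $H_1 \cap H_2 = \{0\}$", and a sufficient condition for $r(H_1,H_2) < 1$ is that the maximal correlation — equivalently (Proposition \ref{Pro:rho equal}) the operator norm $\|P_{H_1 H_2}\|$ — is strictly below $1$. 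I would derive this from $\Gamma_{rr}(h) \to 0$: for any unit vectors $a,b$ defining $X_1 = a'\mathbf{X}^{(1)} \in H_1$, $X_2 = b'\mathbf{X}^{(2)} \in H_2$, as the lag $h$ between the blocks is at least $m+1$, the correlation $\mathrm{Corr}(X_1,X_2)$ is controlled by the cross-covariances $\Gamma_{rr'}(\ell)$ with $\ell$ ranging over $\{h-2m,\ldots,h\}$, all of which are bounded; but to get a uniform bound below $1$ I would use that the block $\mathbf{X}^{(2)}$ has a nondegenerate covariance (from $\Gamma_{jj}(0)$ nonsingular, stationarity gives the block covariance is positive definite), so the supremum of $\mathrm{Corr}(X_1,X_2)$ over $X_1 \in H_1$ is a continuous function of $X_2$ on the (compact, after normalizing) unit sphere of $H_2$ and is strictly less than $1$ unless $H_1 \cap H_2 \ne \{0\}$ — and $H_1 \cap H_2 = \{0\}$ because a shared element would have to have covariance-function values forcing, via stationarity and $\Gamma_{jj}(h)\to 0$, a contradiction with nondegeneracy. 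Once $r(H_1,H_2) < 1$ is established, Corollary \ref{Cor:extr indep tran gaus} gives extremal independence of $Y_{0,i}$ and $Y_{h,j}$; ranging over all $i,j$ and all $h \ge m+1$, and invoking stationarity to translate back to arbitrary pairs $(Y_{k,i},Y_{\ell,j})$ with $\ell - k \ge m+1$, yields that $\{Y_k\}$ is $m$-extremally-independent.

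The main obstacle is the middle step — cleanly proving $r(H_1,H_2) < 1$ from the bare hypotheses ($\Gamma_{jj}(0)$ nonsingular, $\Gamma_{jj}(h) \to 0$). The subtlety is that $\Gamma_{jj}(h) \to 0$ is an asymptotic statement, so it does not by itself bound the correlation between two blocks separated by the \emph{fixed} small lag $m+1$; one genuinely needs the nondegeneracy of the block covariance together with the fact that two finite-dimensional Gaussian subspaces either intersect trivially (giving canonical correlation $<1$ automatically, since the sup defining $r$ is attained on compact sets) or share a line. Ruling out the shared line is where I expect to spend the most care — most naturally by using the moving-average representation to place $H_1$ and a complement of $H_2$ in orthogonal innovation subspaces, at the cost of assuming \eqref{equality_general_linear_process}, which is in force throughout the paper anyway.
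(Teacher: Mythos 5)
Your overall strategy --- fix a lag $h\ge m+1$, observe that $Y_{0,i}$ and $Y_{h,j}$ are measurable functions of the disjoint Gaussian blocks $(X_{-m},\ldots,X_0)$ and $(X_{h-m},\ldots,X_h)$, and conclude via Corollary \ref{Cor:extr indep tran gaus} once the canonical correlation of the two spanned subspaces is shown to be strictly less than $1$ --- is exactly the paper's. The problem is that you never close the one step that carries the mathematical content, and you say so yourself (``the main obstacle is the middle step''). The paper closes it by invoking Lemma \ref{le:multiBrochwellDavis}: under the stated hypotheses, the covariance matrix $(\Gamma(i-j))_{1\le i,j\le n}$ of any finite stretch $(X_1',\ldots,X_n')'$ is nonsingular, which gives linear independence of all components of the two blocks simultaneously --- precisely the hypothesis of Corollary \ref{Cor:extr indep tran gaus}. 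Your dichotomy ``either the two finite-dimensional Gaussian subspaces intersect trivially, whence $r(H_1,H_2)<1$ by compactness, or they share a line'' is correct, but ruling out the shared line is exactly the content of that lemma, which you only gesture at (``a contradiction with nondegeneracy'') without proving. As it stands the proposal has a genuine gap at its central step.

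Your first attempted route, via the moving-average representation, also contains an error you half-notice: $\Gamma(0)=\sum_l\Psi_l\Psi_l'$ being of full rank does not force $\Psi_0$ to be of full rank, nor even nonzero, so nothing guarantees that a nonzero element of $H_2$ has a nonzero projection onto $\mathrm{span}\{\varepsilon_{s,r}: s\ge h-m\}$. If, say, $\Psi_0=\cdots=\Psi_m=0$, then for $h=m+1$ the entire block $(X_{h-m},\ldots,X_h)$ lies in $\overline{\mathrm{span}}\{\varepsilon_{s,r}: s\le 0\}$ and the orthogonality argument collapses (the conclusion $H_1\cap H_2=\{0\}$ may still hold, but not by this argument). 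If you want a self-contained proof, the clean path is to establish the multivariate Brockwell--Davis lemma directly: suppose $(\Gamma(i-j))_{1\le i,j\le n}$ were singular for some $n$, write one component as a linear combination of the preceding ones, propagate this identity by stationarity to arbitrarily large time indices, and derive a contradiction with $\Gamma_{jj}(h)\to 0$ and the nondegeneracy of $\Gamma_{jj}(0)$.
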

\begin{proof}
    By Lemma \ref{le:multiBrochwellDavis} below, the assumption imposed implies that   the $nd\times nd$ covariance matrix $\pp{\Gamma(i-j)}_{1\le i,j\le n}$, that is,   the covariance matrix of the $nd\times 1$ vector $ (X_1',\ldots,X_n')'$,  is nonsingular for all $n\in \Z_+$.  
    This implies that  the $(m+1)d$-dimensional vectors $ (X_{-m}',\ldots,X_{0}')'$ and  $(X_{1}',\ldots,X_{m+1}')'$ are linearly independent.
  Then apply Corollary \ref{Cor:extr indep tran gaus}.
\end{proof}


{
\begin{Lem} \label{le:multiBrochwellDavis}
Suppose that $\{X_k\}_{k\in\Z}$ is a $d$-dimensional centered Gaussian process
with the causal linear representation \eqref{equality_general_linear_process}
with 
$\sum_{r=0}^\infty \|\Psi_r\|^2<\infty$. If $\Psi_0$ is nonsingular, then, for
every $n\in\Z_+$, the $nd\times nd$ covariance matrix
\[
\bigl(\Gamma(i-j)\bigr)_{i,j=1,\dots,n}, \qquad
\Gamma(h)=\esp[X_hX_0'],
\]
is nonsingular.
\end{Lem}
}

{
\begin{proof}
Let $a_1,\dots,a_n\in\mathbb{R}^d$ and suppose that
\[
\Var\left(\sum_{k=1}^n a_k'X_k\right)=0.
\]
It suffices to show that $a_1=\cdots=a_n=0$. Recall
$
X_{k} = \sum_{l=0}^\infty \Psi_{l} \varepsilon_{k-l},,
$
where each  innovation $\varepsilon_k$ has identity covariance matrix. 
  Conditional on \(\sigma(\varepsilon_j:j\le 0)\),
the only random part of \(\sum_{k=1}^n a_k'X_k\) depending on the fresh
innovations \(\varepsilon_1,\dots,\varepsilon_n\) is
\begin{equation*} 
\sum_{m=1}^n 
\left(\sum_{k=m}^n a_k'\Psi_{k-m}\right)\varepsilon_m .
\end{equation*}
Since the total variance is zero, this conditional random part must have
variance zero. Hence we must have 
\begin{equation}\label{eq:cond expre}
\sum_{k=m}^n a_k'\Psi_{k-m}=0, \quad m=1,\ldots,n.
\end{equation}  
 In particular,  with $m=n$ above, we have   \(a_n'\Psi_0=0\). Since \(\Psi_0\) is nonsingular by assumption, this implies  \(a_n=0\).
 
Now, with \(a_n=0\), the expression \eqref{eq:cond expre} when $m=n-1$     becomes
$
a_{n-1}'\Psi_0,
$
and hence we deduce \(a_{n-1}=0\). Continuing backward like this gives
$
a_n=a_{n-1}=\cdots=a_1=0.
$
\end{proof}
}

\subsection{Properties of the Latent Linear Process} \label{se:linear process properties}

Crucial for our result is the second-order behavior of the process $\{ X_k\}$ with causal representation \eqref{equality_general_linear_process}. We  introduce the following lemma that characterizes the decay of the autocovariance matrix function.

\begin{Lem} \label{le:behavior_acf}
Suppose \eqref{equality_long_range_dep_linear_process}. Then, as $h\rightarrow\infty$, 
\begin{equation}\label{eq:b sq tail}
 \sum_{j=0}^\infty  \Psi_{j+h} \Psi_{j+h}' =o\pp{ \frac{1}{\log(h)} }  ,
\hspace{0.2cm}
 \Gamma(h):=\esp X_h X_0'=  \sum_{j=0}^\infty \Psi_j \Psi_{j+h}' =o\pp{ \frac{1}{\log(h)} },
\end{equation}
where asymptotic relations  hold componentwise (and hence also in any matrix norm). 
\end{Lem}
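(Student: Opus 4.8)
The plan is to establish the first asymptotic relation in \eqref{eq:b sq tail} directly from the decay hypothesis \eqref{equality_long_range_dep_linear_process}, and then to derive the second relation (the autocovariance decay) from the first via the Cauchy--Schwarz inequality. It suffices to work entrywise, so fix indices $i,j\in\{1,\dots,d_0\}$ and consider the scalar sequence $\{\psi_{ij,l}\}_{l\ge 0}$. By \eqref{equality_long_range_dep_linear_process} there is a function $\epsilon(l)\to 0$ with $|\psi_{ij,l}|\le \epsilon(l)\, l^{-1/2}\log(l)^{-1}$ for all large $l$; without loss of generality I may take $\epsilon$ nonincreasing (replacing it by $\sup_{k\ge l}\epsilon(k)$).

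First I would bound the diagonal-type tail sum. The $(i,j)$ entry of $\sum_{k=0}^\infty \Psi_{k+h}\Psi_{k+h}'$ is $\sum_{k=0}^\infty \sum_{l=1}^{d_0}\psi_{il,k+h}\psi_{jl,k+h}$, and by Cauchy--Schwarz (in $l$) it is enough to control $\sum_{k=0}^\infty \psi_{il,k+h}^2$ for each fixed $l$. Using the bound above,
\[
\sum_{k=0}^\infty \psi_{il,k+h}^2 \;\le\; \epsilon(h)^2 \sum_{k=0}^\infty \frac{1}{(k+h)\log(k+h)^2}
\;\le\; \epsilon(h)^2 \int_{h-1}^\infty \frac{dt}{t\log(t)^2}
\;=\; \frac{\epsilon(h)^2}{\log(h-1)},
\]
for $h$ large, where I used monotonicity of $\epsilon$ to pull out $\epsilon(h)^2$ and then compared the sum to the integral of the decreasing integrand $t\mapsto (t\log t^2)^{-1}$, whose antiderivative is $-1/\log t$. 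Since $\epsilon(h)^2/\log(h-1) = o(1/\log h)$ (because $\log(h-1)/\log h\to 1$ and $\epsilon(h)\to 0$), this gives the first claim in \eqref{eq:b sq tail}, with an $o(\log(h)^{-1})$ bound that is in fact uniformly small after multiplying by $\log h$.

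For the second relation, write the $(i,j)$ entry of $\Gamma(h)=\sum_{k=0}^\infty \Psi_k\Psi_{k+h}'$ as $\sum_{k=0}^\infty\sum_{l=1}^{d_0}\psi_{il,k}\psi_{jl,k+h}$ and apply Cauchy--Schwarz in the variable $k$:
\[
\Bigl|\sum_{k=0}^\infty \psi_{il,k}\,\psi_{jl,k+h}\Bigr|
\;\le\; \Bigl(\sum_{k=0}^\infty \psi_{il,k}^2\Bigr)^{1/2}\Bigl(\sum_{k=0}^\infty \psi_{jl,k+h}^2\Bigr)^{1/2}.
\]
The first factor is a finite constant (finiteness of $\sum_l \psi_{il,l}^2$ follows already from \eqref{equality_long_range_dep_linear_process}, as noted in the text), and the second factor is $O\bigl(\epsilon(h)/\sqrt{\log h}\bigr) = o\bigl(1/\sqrt{\log h}\bigr)$ by the estimate just proven. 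Hence each such term is $o(1/\sqrt{\log h})$, which is itself $o(1/\log h)$? -- no: I should be careful here, so instead I bound more sharply. Summing over the finitely many $l$ keeps the $o(1/\sqrt{\log h})$ rate, and since $1/\sqrt{\log h}\ge 1/\log h$ for large $h$ this is weaker than claimed; therefore for the sharp $o(1/\log h)$ rate I instead split: for the second factor use the genuine tail $\sum_{k\ge 0}\psi_{jl,k+h}^2 = o(1/\log h)$ from the first part, and for the first factor use that $\sum_{k\ge 0}\psi_{il,k}^2<\infty$, so the product is $o(1/\log h)\cdot O(1) = o(1/\log h)$. I retract the spurious $\sqrt{\cdot}$ step: the clean route is to keep one factor as the full convergent series and the other as the decaying tail, giving exactly $o(1/\log h)$ entrywise, hence in any matrix norm by equivalence of norms on $\R^{d_0\times d_0}$.

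The only mild obstacle is bookkeeping: making the ``$o$'' in \eqref{equality_long_range_dep_linear_process} uniform enough to be pulled outside the infinite sum (handled by the monotone majorant $\epsilon$), and getting the integral comparison $\sum_{k\ge 0}(k+h)^{-1}\log(k+h)^{-2}\asymp \log(h)^{-1}$ right (standard, since the integrand is eventually decreasing). Everything else is Cauchy--Schwarz and the remark, already in the text, that \eqref{equality_long_range_dep_linear_process} implies square-summability of each coefficient sequence.
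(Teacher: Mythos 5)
Your treatment of the first relation in \eqref{eq:b sq tail} is correct: the monotone majorant $\epsilon(\cdot)$ together with the integral comparison $\sum_{m\ge h}m^{-1}\log(m)^{-2}\le 1/\log(h-1)$ gives exactly $o(1/\log h)$, and this is essentially the same estimate the paper uses for its term $\mathrm{II}_h$.

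The second relation, however, has a genuine gap. Cauchy--Schwarz in $k$ gives
\[
\Bigl|\sum_{k\ge 0}\psi_{il,k}\,\psi_{jl,k+h}\Bigr|\le\Bigl(\sum_{k\ge 0}\psi_{il,k}^2\Bigr)^{1/2}\Bigl(\sum_{k\ge 0}\psi_{jl,k+h}^2\Bigr)^{1/2}=O(1)\cdot o\bigl((\log h)^{-1/2}\bigr),
\]
and the square roots cannot be ``retracted'': the inequality $|\sum_k a_k b_k|\le(\sum_k a_k^2)(\sum_k b_k^2)$ that your final step implicitly invokes is false (take $a=b=(t,0,0,\dots)$ with $0<t<1$). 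Worse, no argument using only the two quantities $\sum_k\psi_{il,k}^2<\infty$ and $\sum_k\psi_{jl,k+h}^2=o(1/\log h)$ can reach $o(1/\log h)$: if $\psi_{il,\cdot}=(1,0,0,\dots)$ the cross sum equals $\psi_{jl,h}$ itself, and the $\ell^2$-tail hypothesis alone only forces $\psi_{jl,h}=o((\log h)^{-1/2})$. You must re-use the pointwise decay \eqref{equality_long_range_dep_linear_process} inside the cross sum. The standard repair --- and in substance what the paper's integral comparison with the substitution $x=hy$ accomplishes --- is to split the sum at $k\asymp h$: for $k\le h$, bound $|\psi_{jl,k+h}|\le\epsilon(h)h^{-1/2}\log(h)^{-1}$ and use $\sum_{k\le h}|\psi_{il,k}|=O\bigl(h^{1/2}\log(h)^{-1}\bigr)$, giving a contribution $O\bigl(\epsilon(h)\log(h)^{-2}\bigr)$; for $k>h$, both factors are in the tail regime, so the contribution is at most $\epsilon(h)^2\sum_{k>h}k^{-1}\log(k)^{-2}=O\bigl(\epsilon(h)^2/\log h\bigr)$. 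Both pieces are $o(1/\log h)$, which closes the gap; the rest of your write-up (entrywise reduction, equivalence of matrix norms) then goes through unchanged.
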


\begin{proof}
Observe that it suffices to show that if instead $\psi_{kl,j}=b_{kl} j^{-1/2}\log(j)^{-1}\ind\{j\ge 4\}$ was assumed for any finite nonnegative constants $b_{kl}$, then both relations in \eqref{eq:b sq tail} hold with $o(\cdot)$ replaced with $O(\cdot)$ in these relations.  Under this  assumption on $\{\Psi_j\}$, 
with an integral comparison, for $k,r=1,\dots,d_0$,    
\begin{align}
\left( \sum_{j=0}^\infty \Psi_j \Psi_{j+h}' \right)_{k,r}
   & \leq
    \sum_{j=0}^\infty \left|\sum_{l=1}^{d_0} \psi_{kl,j} \psi_{lr,j+h} \right| 
    \le  
    \left|\sum_{l=1}^{d_0} b_{kl} b_{lr} \right| \int_e^\infty  x^{-1/2}(x+h)^{-1/2} \log(x)^{-1}\log(x+h)^{-1} dx \nonumber\\
    &= 
    C  \left(\int_{e/h}^e + \int_{e}^\infty \right)  y^{-1/2}(y+1)^{-1/2} \left[\log(hy)\right]^{-1}\left[\log(h(y+1))\right]^{-1 } dy
\nonumber
\\&=: C\pp{ \mathrm{I} _h+ \mathrm{II}_h}. \label{al:behavior_acf_1}
\end{align} 
We consider these two terms separately.
For $\mathrm{I}_h$ in \eqref{al:behavior_acf_1}, we have, by monotonicity, 
\[
\mathrm{I}_h\le  C \int _{e/h}^{e} y^{-1/2}  \log(hy)^{-2 }  dy=O(\log(h)^{-2 }),   
\]
as $h\rightarrow\infty$, where the last relation can be justified by dominated convergence and the Potter bounds for slowly varying functions (e.g., (2.2.1) in \citet{pipiras2017long}): $\log(hy)^{-2}/\log(h)^{-2}\le C y^{-\varepsilon}$ for $y\in (0,e]$, where  $\varepsilon\in (0,1/2)$.
For $\mathrm{II}_h$ in \eqref{al:behavior_acf_1}, by monotonicity,
\begin{align*}
\mathrm{II}_h\le C \int_e^\infty y^{-1} \log(hy)^{-2}dy=\int_{e h}^\infty z^{-1}\log(z)^{-2}dz=O(\log(h)^{-1}).
\end{align*}
Thus, we proved the $O(\cdot)$-version of the second relation in \eqref{eq:b sq tail}. The $O(\cdot)$-version of the first relation in \eqref{eq:b sq tail} follows from an integral comparison similar to the bound for $\mathrm{II}_h$ above.
\end{proof}

\section{Proofs of the Main Results}

\subsection{Proof of Theorem \ref{Thm:max}}\label{sec:pf Thm max}

The proof is based on Theorem \ref{Thm:gap point proc} that involves convergence of a nonstandard point process with gaps between time blocks.
\begin{proof}[Proof of Theorem \ref{Thm:max}]
When applying Theorem \ref{Thm:gap point proc},  we assume throughout that 
$r,p> m$,
where $m$ is as in \eqref{eq:gaus sub Y multi}.  Recall 
   $\pp{ Y_{(j-1)(r+p)+1},\ldots,  Y_{jr+(j-1)p}}$, $j\in \Z_+$, are blocks of consecutive  $Y_k$'s, each of length $r$, with a gap of size $p$  between two adjacent blocks. The state space $E$ is simply $E=\{0,1\}$, where the only bounded subset is $\{1\}$. 
We will use the notion of vague convergence for locally finite measures on the localized Polish space $E=\{0,1\}$; see \citet[Appendix B]{kulik20heavy}.
Choose the sequence $T_n:\R^{d \times r} \to  E$, $n\in \Z_+$, of measurable transforms in \eqref{eq:Y_n(j)} such that 
\begin{equation}\label{eq:Y_n(j)-ind}
Y_n(j)=Y_{r,p,n}(j)= \mathbf{1}{\left\{    Y_{(j-1)(r+p)+1 } \nleq u_n(\tau)  \text{ or }\ldots \text{ or } Y_{jr+(j-1)p} \nleq u_n(\tau) \right\} } \in E, \quad  j\in \Z_+,
\end{equation}
where we have suppressed the dependence on $r,p$ in the notation $Y_{n}(j)$; here and below, we put a block-level time index  (e.g., $j$ in $Y_{n}(j)$)   in  parentheses, in contrast to the ordinary time index in the subscript (e.g., $j$ in $Y_j$). 

We aim to apply Theorem \ref{Thm:gap point proc} and therefore need to verify assumption \eqref{eq:block conv}. Note first that given \eqref{eq:Y_n(j)-ind}, recalling from \eqref{eq:M_n component} that $M_{r,i}=\max_{1\le k\le r}Y_{k,i}$ and $M_r=(M_{r,1},\ldots,M_{r,d})' $,    we have   
\begin{equation} \label{con:max-first-r}
    \P \pp{ Y_{r,p,n}(1) = 1  }=
    \P \pp{ M_r \nleq  u_n(\tau)}.
\end{equation}
By a last-exceedance decomposition, 
\begin{align}
  \P \pp{ M_r \nleq  u_n(\tau)}
=&
\sum_{j=1}^{r}\P \pp{ Y_{j} \nleq u_n(\tau) ,  Y_{j+1}\le u_n(\tau),\ldots, Y_{r}\le u_n(\tau) }  
\nonumber
\\= &
\sum_{j=1}^{r-m}\cdots
+
\sum_{j=r-m+1}^{r}\cdots =: L_n+R_n.
\label{eq:last-exceedance decomposition :L+R}
 \end{align}
We consider the two summands in \eqref{eq:last-exceedance decomposition :L+R} separately. 
For $L_n$ in \eqref{eq:last-exceedance decomposition :L+R}, by an argument similar to  the proof of Proposition \ref{Pro:m extre dep}, we have
 \begin{align}
L_n
&=
\sum_{j=1}^{r-m} \Bigg[ \P \pp{  Y_j\nleq u_n(\tau), Y_{j+1}\leq u_n(\tau),\ldots,Y_{j+m}\leq u_n(\tau)
 }
\\&\hspace{1.2cm}- 
\P \pp{  \bigcup_{\ell=j+m+1}^{r} \pc{ Y_j\nleq u_n(\tau), Y_{j+1}\leq u_n(\tau),\ldots,Y_{j+m}\leq u_n(\tau), Y_\ell \nleq u_n(\tau) }} \Bigg].
\label{eq:L_n decomp}
\\&= (r-m) \P\pp{ Y_0\nleq u_n(\tau), Y_{1}\leq u_n(\tau),\ldots,Y_{1+m}\leq u_n(\tau)} +o(n^{-1})
\end{align}
where for the first term, we have  used stationarity, and for the $o(n^{-1})$ term, we have applied $m$-extremal-dependence via Proposition \ref{Pro:m ext dep gaus sub multi} and \eqref{eq:tailbehavemulti_1}.  
On the other hand, 
by the Assumption \eqref{eq:ext index m-dep multi},  we have 
\[
\lim_{n\rightarrow\infty}\frac{R_n}{\P\pp{Y_0 \nleq u_n(\tau)}} = \sum_{\ell=0}^{m-1} \theta_l(\tau)=:R(\tau)\in (0,m].
\]
Hence combining the relations above, and recalling from \eqref{eq:limit G(tau)} that $\lim_{n \to \infty} n\P\pp{Y_0 \nleq u_n(\tau)}= -\log\pc{H(\tau)}$, $H(\tau) \in (0,1)$, we arrive at the following conclusion regarding \eqref{eq:block conv}:    
\begin{equation}\label{eq:Lambda_rp limit}
\Lambda_{r,p}\pp{\pc{1}}=\lim_{n \to \infty} \frac{n}{r+p} \P \pp{Y_{r,p,n}(1) =1  }= -\pb{\frac{r-m}{r+p} \theta_m(\tau) + \frac{1}{r+p} R(\tau)} \log\pc{H(\tau)}. 
\end{equation}
Next, introduce  
$$J_n(r,p)= \{1,\ldots,n\} \bigcap  \pp{\bigcup_{j=1}^\infty \{(j-1)(r+p)+1,\ldots,  jr+(j-1)p \} }.
$$
In words, $J_n(r,p)$ is the    subset of $\{1,\ldots,n\}$ consisting of   blocks of consecutive integers, each of size $r$, with gap size $p$  between adjacent blocks,  with the last block–gap segment   possibly truncated at  $n$.
Let $J_n(r,p)^c=\{1,\ldots,n\}\setminus J_n(r,p)$.  Write
\begin{align}
E_n(r,p)
&:=
\bigcup_{j\in J_n(r,p)} \pc{ Y_j \nleq u_n(\tau)} 
\subset  
 \pc{M_n\nleq u_n(\tau)}
\\&= \pc{\bigcup_{j\in J_n(r,p)} \pc{ Y_j \nleq u_n(\tau)}} \cup  \pc{\bigcup_{j\in J_n(r,p)^c} \pc{ Y_j \nleq u_n(\tau)}}    
=: E_n(r,p)\cup E_n^c(r,p),
\end{align}
    Therefore, in view of the convergence to Poisson limit in Theorem \ref{Thm:gap point proc}, we have as $ n\rightarrow\infty$ that 
\begin{equation}\label{eq:E_n limit}
\P\pp{E_n(r,p)}\sim \P\pp{\xi_n(\{1\}\times (0,1] )>0}\rightarrow 1-\exp\pb{-\Lambda_{r,p}(\{1\})},
\end{equation} 
Here, to justify the first asymptotic relation above regarding the possible truncation of the last block–gap segment at $n$, note that
\begin{align*}
\{\xi_n(\{1\}\times (0,1] )>0\}  \subset   E_n(r,p) \subset     \{\xi_n(\{1\}\times (0,1] )>0\} \cup D_{n}(r,p),
\end{align*}
where the event $D_n(r,p):=\bigcup_{  (r+p)\lfloor \frac{n}{r+p} \rfloor   +1 \le   \ell \le  (r+p)\lfloor \frac{n}{r+p} \rfloor+r } \{ Y_\ell \nleq u_n(\tau) \}$ has probability tending to $0$ in view of a union bound, stationarity and \eqref{eq:tailbehavemulti_1}. 

Next, observe that the event $E_n^c(r,p)$,  as $n\rightarrow\infty$, behaves like those of   $E_n(p,r)$, i.e.,  the roles between ``blocks'' and ``gaps'' are switched.  Using arguments similar to those above, we also have
\begin{equation}\label{eq:E_n^c limit}
\P\pp{E_n^c(r,p)} \rightarrow 
1-\exp(-\Lambda_{p,r}(\{1\})),
 \end{equation}
 as $n\rightarrow\infty$, noticing the switch between $r$ and $p$ compared to  $\Lambda_{r,p}$ in \eqref{eq:Lambda_rp limit}.
 
 Finally, from \eqref{eq:Lambda_rp limit}, we see that $\lim_{r\rightarrow\infty}\Lambda_{r,p}(\{1\})=-\log\pc{ H(\tau)^{\theta(\tau)}}$, and $\lim_{r\rightarrow\infty}\Lambda_{p,r}(\{1\})=0$. 
 So the conclusion \eqref{eq:sample_extreme_max_res} of the theorem follows  from \eqref{eq:E_n limit} and \eqref{eq:E_n^c limit} by taking limits on both ends of the following inequalities: 
 \begin{align*}
 \P\pp{E_n(r,p)} \le \P\pp{ M_n\nleq u_n(\tau)} \le  \P\pp{E_n(r,p)} +\P\pp{E_n^c(r,p)}, 
 \end{align*}
first, as $n\rightarrow\infty$, and then $r\rightarrow\infty$. 
\end{proof}

\subsection{Proof of Theorem \ref{Thm:limit max MA}}

In this section, we illustrate the use of Theorem \ref{Thm:max} through proving Theorem \ref{Thm:limit max MA}. Since Theorem \ref{Thm:max} is not readily available because the M4 process in \eqref{eq:gaus sub max MA} is a transformation in infinitely many variables, we need to apply a truncation argument to transfer a limit theorem for finite-dimensional transforms as in Theorem \ref{Thm:max} to that for   infinite-dimensional  ones.
In particular,  the following approximation strategy adapted from \citet[Proposition 1.4]{chernick1991calculating} (see also \citet[Corollary 7.5.7]{kulik20heavy})  will serve this purpose.

\begin{Lem}\label{Lem:approx}
Suppose for each $m\in \Z_+$, the process $\{(Z_k,Z_k^{(m)})\}_{k\in \Z}$ is  jointly stationary with $Z_k=(Z_{k,1},\ldots,Z_{k,d})'\in \R^d$ and $Z_{k}^{(m)}=(Z_{k,1}^{(m)},\ldots,Z_{k,d}^{(m)})'$, $ d\in\Z_+$.  Suppose the sequence  $u_n =(u_{n,1},\dots,u_{n,d})'$ satisfies  
 for any $\epsilon>0$ that
\begin{equation}\label{eq:approx Lem cond 1}
\lim_{\epsilon\rightarrow 0}
\limsup_{n\rightarrow\infty }n 
\P\pp{(1-\epsilon)u_{n,i}< Z_{0,i} \le (1+\epsilon)u_{n,i}}=0, 
\end{equation}
and  
\begin{equation}\label{eq:approx Lem cond 2}
\lim_{m\rightarrow \infty}\limsup_{n\rightarrow\infty }n   \P\pp{ | Z_{0,i} - Z^{(m)}_{0,i} | > \epsilon u_{n,i} } =0, 
\end{equation}
for all $i=1,\ldots,d$.
Write $M_i(J)=\max_{j\in J} Z_{j,i}$ and $M^{(m)}_i(J)=\max_{j\in J} Z_{j,i}^{(m)}$ for nonempty $J\subset \{1,\ldots,n\}$,   $M(J)=\pp{M_1(J), \ldots, M_d(J)}'$ and $M^{(m)}(J)=\pp{M^{(m)}_1(J),\ldots,M^{(m)}_d(J) }'$.
Then,
\begin{equation}\label{eq:tri approx max}
\lim_{m\rightarrow\infty}\limsup_{n\rightarrow\infty}   \sup_{\emptyset\neq J \subset\{1,\ldots,n\}}  \left|   \P\pp{M(J)\le u_n} -\P\pp{M^{(m)}(J)\le u_n   }\right| =0.
\end{equation}
If additionally, $0<\liminf_{n\rightarrow\infty} n \P(Z_{0}\nleq u_{n})\le \limsup_{n\rightarrow\infty} n \P(Z_{0}\nleq u_{n})<\infty $, then
for any $\ell\in \Z_+$, 
\begin{equation} \label{eq:tri approx theta}
\begin{aligned}
\lim_{m\rightarrow\infty}\limsup_{n\rightarrow\infty}  
&
\Bigg| \P  \pp{ 
Z_1\le u_n,\ldots,  Z_\ell \le u_n  ~\bigg|  ~ Z_0\nleq u_n } 
\\&\vspace{1cm}-  \P  \pp{ 
Z_1^{(m)}\le u_n,\ldots,  Z_\ell^{(m)} \le u_n  ~\bigg| ~  Z_0^{(m)}\nleq u_n }  \Bigg|=0.
\end{aligned}
\end{equation}
\end{Lem}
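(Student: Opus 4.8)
The plan is to prove \eqref{eq:tri approx max} first, and then bootstrap \eqref{eq:tri approx theta} from it. For \eqref{eq:tri approx max}, fix $\epsilon>0$ and a nonempty $J\subset\{1,\ldots,n\}$. On the event $\bigcap_{j\in J,\,i}\{|Y_{j,i}-Y_{j,i}^{(m)}|\le \epsilon u_{n,i}\}$ we have $M_i^{(m)}(J)\le M_i(J)+\epsilon u_{n,i}$ and $M_i(J)\le M_i^{(m)}(J)+\epsilon u_{n,i}$ for each $i$. Hence $\{M^{(m)}(J)\le (1-\epsilon)u_n\}\subset\{M(J)\le u_n\}\cup B_{n,m}(J)$ and, symmetrically, $\{M(J)\le (1-\epsilon)u_n\}\subset\{M^{(m)}(J)\le u_n\}\cup B_{n,m}(J)$, where $B_{n,m}(J)=\bigcup_{j\in J}\bigcup_{i=1}^d\{|Y_{j,i}-Y_{j,i}^{(m)}|>\epsilon u_{n,i}\}$. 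By a union bound and stationarity, $\P(B_{n,m}(J))\le \sum_{i=1}^d |J|\,\P(|Y_{0,i}-Y_{0,i}^{(m)}|>\epsilon u_{n,i})\le n\sum_{i=1}^d\P(|Y_{0,i}-Y_{0,i}^{(m)}|>\epsilon u_{n,i})$, which is uniform in $J$ and, by \eqref{eq:approx Lem cond 2}, vanishes after $\limsup_n$ then $m\to\infty$. It remains to compare $\P(M(J)\le u_n)$ with $\P(M(J)\le(1-\epsilon)u_n)$ and $\P(M(J)\le(1+\epsilon)u_n)$: the difference is bounded by $\sum_{i=1}^d\P((1-\epsilon)u_{n,i}<M_i(J)\le(1+\epsilon)u_{n,i})\le n\sum_{i=1}^d\P((1-\epsilon)u_{n,i}<Y_{0,i}\le(1+\epsilon)u_{n,i})$, again uniform in $J$ and controlled by \eqref{eq:approx Lem cond 1} after taking $\limsup_n$ and then $\epsilon\to0$. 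Chaining these inclusions and bounds gives, for each fixed $\epsilon$, a uniform-in-$J$ bound on $|\P(M(J)\le u_n)-\P(M^{(m)}(J)\le u_n)|$ of the form $(\text{term}\to0\text{ as }m\to\infty)+(\text{term}\to0\text{ as }\epsilon\to0)$; sending $m\to\infty$ and then $\epsilon\to0$ yields \eqref{eq:tri approx max}.

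For \eqref{eq:tri approx theta}, write the conditional probability as a ratio of the numerator $N_n=\P(Y_1\le u_n,\ldots,Y_\ell\le u_n,\ Y_0\nleq u_n)$ and the denominator $D_n=\P(Y_0\nleq u_n)$, and similarly $N_n^{(m)},D_n^{(m)}$ for the $m$-truncated process. The additional hypothesis gives $\liminf_n nD_n>0$ and $\limsup_n nD_n<\infty$; the same bounds for $D_n^{(m)}$ follow for $m$ large by applying the $J=\{0\}$ case of \eqref{eq:tri approx max} (after noting $\{Y_0\le u_n\}$ and $\{Y_0\nleq u_n\}$ are complementary events, so $|D_n-D_n^{(m)}|\le$ the uniform bound). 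Thus it suffices to show $n|N_n-N_n^{(m)}|$ and $n|D_n-D_n^{(m)}|$ vanish in the iterated limit, since then $|N_n/D_n-N_n^{(m)}/D_n^{(m)}|\le |N_n-N_n^{(m)}|/D_n + N_n^{(m)}|D_n^{(m)}-D_n|/(D_n D_n^{(m)})$, and both terms are $O(n|N_n-N_n^{(m)}|)+O(n|D_n-D_n^{(m)}|)$ by the uniform lower bounds on $nD_n,nD_n^{(m)}$. The bound $n|D_n-D_n^{(m)}|\to0$ is exactly the $J=\{0\}$ instance of \eqref{eq:tri approx max} scaled by $n$ — but note \eqref{eq:tri approx max} only gives $o(1)$, not $o(1/n)$, so this naive route fails. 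Instead one must revisit the proof of \eqref{eq:tri approx max} at the level of unnormalized probabilities: the symmetric-difference estimates above already show $|D_n-D_n^{(m)}|\le \P(B_{n,m}(\{0\}))+\big(\text{gap terms}\big)$ where $\P(B_{n,m}(\{0\}))=\sum_i\P(|Y_{0,i}-Y_{0,i}^{(m)}|>\epsilon u_{n,i})$; multiplying by $n$ and invoking \eqref{eq:approx Lem cond 2}, \eqref{eq:approx Lem cond 1} directly (these conditions are already stated with the factor $n$) gives $n|D_n-D_n^{(m)}|\to0$ in the iterated limit, and the same argument applied to the event $\{Y_1\le u_n,\ldots,Y_\ell\le u_n,\ Y_0\nleq u_n\}$ (a finite union/intersection of one-coordinate exceedance events over the index set $\{0,1,\ldots,\ell\}$) gives $n|N_n-N_n^{(m)}|\to0$.

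The main obstacle is precisely this last point: \eqref{eq:tri approx theta} requires control of $|N_n-N_n^{(m)}|$ and $|D_n-D_n^{(m)}|$ at rate $o(1/n)$, which is strictly stronger than what \eqref{eq:tri approx max} delivers, so one cannot simply quote \eqref{eq:tri approx max} as a black box. The resolution is that hypotheses \eqref{eq:approx Lem cond 1} and \eqref{eq:approx Lem cond 2} are themselves stated with the normalization by $n$ built in, so the same elementary inclusion/symmetric-difference estimates used for \eqref{eq:tri approx max}, when carried out on the unnormalized probabilities and then multiplied by $n$, yield the required $o(1/n)$ control directly. Care is needed to keep the index set finite ($\{0,1,\ldots,\ell\}$ for fixed $\ell$) so that the union bounds produce only finitely many terms, each handled by \eqref{eq:approx Lem cond 2}; stationarity is used throughout to reduce every one-coordinate probability to the time-$0$ probability appearing in the hypotheses.
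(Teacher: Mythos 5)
Your proposal is correct and follows essentially the same route as the paper's proof: the same symmetric-difference/union-bound estimate reducing everything to $n\P((1-\epsilon)u_{n,i}<Y_{0,i}\le(1+\epsilon)u_{n,i})$ and $n\P(|Y_{0,i}-Y_{0,i}^{(m)}|>\epsilon u_{n,i})$ for \eqref{eq:tri approx max}, and the same ratio decomposition with $o(1/n)$ control of numerator and denominator differences for \eqref{eq:tri approx theta}. You also correctly identify and resolve the one genuine subtlety — that \eqref{eq:tri approx theta} needs the unnormalized estimates over the finite index set $\{0,\ldots,\ell\}$ rather than a black-box application of \eqref{eq:tri approx max} — exactly as the paper does by replacing the factor $n$ in the union bound by $\ell$.
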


\begin{proof}
We prove \eqref{eq:tri approx max} and \eqref{eq:tri approx theta} separately.

\medskip
\noindent\textit{Proof of \eqref{eq:tri approx max}:}

The proof is a direct adaptation of \citet[Proposition 1.4]{chernick1991calculating}.  For the reader's convenience we include the details here. Below, the absolute value sign on a vector is understood component-wise.
    \begin{align}
    &
    \left|   \P\pp{M(J)\le u_n} -\P\pp{M^{(m)}(J)\le u_n   }\right|
    \nonumber
    \\&\leq
    \P\pp{  M(J)\le u_n,  M^{(m)}(J)\nleq u_n,    | M(J) - M^{(m)}(J) |   \le  \epsilon u_{n}}  
    \nonumber\\&\hspace{1cm}+  
    \P\pp{  M(J)\nleq u_n,  M^{(m)}(J)\le  u_n, | M(J) - M^{(m)}(J) |  \le  \epsilon u_{n}}   + \P\pp{   | M(J) - M^{(m)}(J) |  \nleq \epsilon u_{n} }
        \\&\leq     \sum_{i=1}^d \P\pp{
        (1-\epsilon)u_{n,i}  < M_{i}(J) \le (1+\epsilon)u_{n,i} } + \sum_{i=1}^d    \P\pp{ | M_{i}(J) - M_{i}^{(m)}(J) | > \epsilon u_{n,i} } \nonumber
        \\&\le \sum_{i=1}^d  n \P\pp{
        (1-\epsilon)u_{n,i}  < Z_{0,i} \le (1+\epsilon)u_{n,i} } + \sum_{i=1}^d   n \P\pp{ | Z_{0,i} - Z_{0,i}^{(m)} | > \epsilon u_{n,i} } \label{eq:approx lem inter}
    \end{align}
The desired result then follows by \eqref{eq:approx Lem cond 1} and \eqref{eq:approx Lem cond 2}.

\medskip

\noindent
\textit{Proof of \eqref{eq:tri approx theta}:}
Introduce the following events
\[
A_n := \left\{Z_1\le u_n,\ldots,  Z_\ell \le u_n  \right\}, \quad B_n := \left\{  Z_0\nleq u_n\right\},
\]
\[
A_n^{(m)} := \left\{Z_1^{(m)}\le u_n,\ldots,  Z_\ell^{(m)} \le u_n  \right\}, \quad B_n^{(m)} := \left\{  Z_0^{(m)}\nleq u_n\right\}.
\]

Then, we aim to show that
\begin{equation} \label{eq:rewritten:approx-eq1}
    \lim_{m \to \infty} \limsup_{n \to \infty} n \left| \P(A_n \mid B_n) - \P(A_n^{(m)} \mid B_n^{(m)}) \right| = 0. 
\end{equation}
The difference in \eqref{eq:rewritten:approx-eq1} can be written as
\begin{equation} \label{eq:rewritten:approx-eq2}
\left| \frac{\P(A_n \cap B_n)}{\P(B_n)} - \frac{\P(A_n^{(m)} \cap B_n^{(m)})}{\P(B_n^{(m)})} \right| \le R_1 + R_2,
\end{equation}
where
\[
R_1 := \left| \frac{\P(A_n \cap B_n) - \P(A_n^{(m)} \cap B_n^{(m)})}{\P(B_n)} \right|, \quad
R_2 := \left| \P(A_n^{(m)} \cap B_n^{(m)}) \left( \frac{1}{\P(B_n)} - \frac{1}{\P(B_n^{(m)})} \right) \right|.
\]
We need to show $R_1$ and $R_2$ in \eqref{eq:rewritten:approx-eq2} vanish as $n\rightarrow\infty$.

First,  we have
\begin{equation}\label{eq:An Anm}
\lim_{m\rightarrow\infty}\limsup_{n\rightarrow\infty}  n \left|   \P\pp{ A_n} -\P\pp{ A_n^{(m)} }\right| =0,
\end{equation}
which can be derived similarly as the proof of \eqref{eq:tri approx max} with $J$ fixed to be $\{1,\ldots,\ell\}$, and $n$ in the   union bound \eqref{eq:approx lem inter} replaced by $\ell$.  In the same way,  we also infer that
\begin{equation}\label{eq:An B Anm Bm}
\lim_{m\rightarrow\infty}\limsup_{n\rightarrow\infty}  n \left|   \P\pp{ A_n\setminus B_n } -\P\pp{ A_n^{(m)}\setminus B_n^{(m)} }\right| =0.
\end{equation}
Hence combining \eqref{eq:An Anm} and \eqref{eq:An B Anm Bm}, we have
\begin{equation}\label{eq:A_n cap B_n and m ver}
\lim_{m\rightarrow\infty}\limsup_{n\rightarrow\infty}  n |\P(A_n \cap B_n) - \P(A_n^{(m)} \cap B_n^{(m)})|=0. 
\end{equation}
This combined with the assumption that $\liminf_{n\rightarrow\infty} n \P(B_n)>0$ implies $\lim_{n \to \infty} R_1=0$.

Now for $R_2$,  similar to \eqref{eq:An Anm}, we infer that 
\begin{align}\label{eq:Bn Bnm}
   \lim_{m\rightarrow\infty}\limsup_{n\rightarrow\infty}  n \left|   \P\pp{ B_n} -\P\pp{ B_n^{(m)} }\right| =0, 
\end{align}
which combined with the assumption $\liminf_{n\rightarrow\infty}  n\P\pp{B_n}>0$ implies that 
for all $m$ sufficiently large,
\begin{equation}\label{eq:B_n^m lower bound}
\liminf_{n\rightarrow\infty} n  \P\pp{ B_n^{(m)} } > \epsilon 
\end{equation}
for some $\epsilon>0$. Also from \eqref{eq:A_n cap B_n and m ver} and the assumption $\limsup_{n\rightarrow\infty} n\P\pp{B_n}<\infty$, we infer   for all $m$ sufficiently large that
\begin{equation}\label{eq:A_nm B_nm bound}
    \limsup_{n\rightarrow\infty} n  \P\pp{ A_n^{(m)} \cap B_n^{(m)} } <M
\end{equation}
for some $M<\infty$.
Therefore,  combining the assumption  $\liminf_{n\rightarrow\infty} n \P(B_n)>0$, \eqref{eq:Bn Bnm}, \eqref{eq:B_n^m lower bound} and \eqref{eq:A_nm B_nm bound}, we have
\begin{align*}
\lim_{m\rightarrow\infty}\limsup_{n\rightarrow\infty} R_2=   \lim_{m\rightarrow\infty}\limsup_{n\rightarrow\infty}   n\P\pp{A_n^{(m)}\cap B_n^{(m)}}   \frac{ n\left|\P\pp{B_n}- \P\pp{B_n^{(m)}} \right| }{n\P\left(B_n\right) n\P\pp{B_n^{(m)}}}=0.
\end{align*}
\end{proof}

To establish Theorem \ref{Thm:limit max MA}, we apply Lemma \ref{Lem:approx}. 
Introduce for each $k \in \Z$, $i=1,\dots,d$, $m\in \Z_+$, a truncated version of ${Y^{\mathrm{M4}}_{k,i}}$ and its remainder as
\begin{equation} \label{eq:Y-m,Ym}
{
   Y_{k,i}^{\MF,(m)}= \bigvee_{|r|\le  m} \bigvee_{j=1}^d a_{ij,r} W_{k-r,j},\quad 
   Y_{k,i}^{\MF,(-m)}= \bigvee_{|r|> m} \bigvee_{j=1}^d a_{ij,r} W_{k-r,j}, 
   }
\end{equation}
{
Write $Y_k^{\MF,(m)}=(Y_{k,1}^{\MF,(m)},\ldots,Y_{k,d}^{\MF,(m)})'$ and $Y_k^{\MF,(-m)}=(Y_{k,1}^{\MF,(-m)},\ldots,Y_{k,d}^{\MF,(-m)})'$.
}

{The following lemma computes the exponent function and the truncated extremal index for the M4 model.}

\begin{Lem}\label{Lem:MA tail RV}
Recall M4 process $\{ {Y^{\mathrm{M4}}_{k,i}}\}$  from \eqref{eq:gaus sub max MA}, where the coefficients satisfy the summability condition \eqref{eq:cond1-maxsum}.   
Then the following conclusions hold.
\begin{enumerate}[label=(\alph*)]
\item \label{item:Y_k finite}  ${Y^{\mathrm{M4}}_{k,i}}<\infty$ almost surely, $k\in \Z$, $i=1,\ldots,d$. 
\item \label{item:Y_k tail} Let $u_n(\tau)=(u_{n,1}(\tau_1),\ldots,u_{n,d}(\tau_d))'$ be as in Definition \ref{Def:ext index multi}  {with $\{Y_k\}=\pc{Y_k^{\MF}}$}. Then 
\begin{align}
    \lim_{n\rightarrow\infty } n
    \P\pp{ {Y^{\mathrm{M4}}_k} \nleq  u_{n}(\tau) } 
    = 
  \sum_{r \in \Z} \sum_{j =1}^d \bigvee_{i = 1}^d  \frac{a_{ij,r}^\alpha \tau_i}{A_i} ,
    \label{eq:approx Lem cond 1-proof-eq3}
\end{align}
where $A_i$ is as in \eqref{eq:A_i}.
\item \label{item:Y_km tail} For the same $u_n(\tau)=(u_{n,1}(\tau_1),\ldots,u_{n,d}(\tau_d))'$ as in \ref{item:Y_k tail},
\begin{align}
    \lim_{n\rightarrow\infty } n
    \P\pp{ {Y_k^{\MF,(m)}} \nleq  u_{n}(\tau) } 
    = 
  \sum_{|r| \le m } \sum_{j =1}^d \bigvee_{i = 1}^d  \frac{a_{ij,r}^\alpha \tau_i}{ {A_i}} ,
    \label{eq:approx Lem cond 1-proof-eq3 m}
\end{align}
\item \label{item:Y_km ext ind}   For the same $u_n(\tau)=(u_{n,1}(\tau_1),\ldots,u_{n,d}(\tau_d))'$ as in \ref{item:Y_k tail},
\begin{align}
    \theta_{2m}(\tau)&:=
    \lim_{n\rightarrow \infty} 
    \P  \pp{  Y_{1}^{\MF,(m)}\le u_n(\tau),\ldots, Y_{2m}^{\MF,(m)}\le u_n(\tau) ~ \bigg|  ~
     Y_{0}^{\MF,(m)}\nleq u_n(\tau) } \nonumber  
   \\&   =
    \frac{
    \sum_{j = 1}^d   {\bigvee_{|r| \leq m} \bigvee_{i = 1}^d  \left(a_{ij,r}^\alpha \tau_i \right)/ {A_i} }} {{
    \sum_{|r| \leq m}} \sum_{j = 1}^d  \bigvee_{i = 1}^d  \left(a_{ij,r}^\alpha \tau_i\right)/ {A_i} }.\label{eq:theta_2m}
    \end{align}
\end{enumerate}

\begin{Rem}
Although the limit in \eqref{eq:approx Lem cond 1-proof-eq3 m} has the same form as that in \eqref{eq:approx Lem cond 1-proof-eq3}, except that the summation is restricted to $|r|\le m$, the conclusion in (c) does not follow directly from that in (b), since the same threshold $u_n(\tau)$ associated with the untruncated process $Y_k^{\MF}$ appears in both (b) and (c). Both conclusions (c) and (d) for the truncated process $Y_k^{\MF,(m)}$ are required in the proof of Theorem \ref{Thm:limit max MA}. 
 \end{Rem}

\begin{Rem}
We have $\tau_1\vee\ldots\vee \tau_d\le \sum_{r \in \Z} \sum_{j =1}^d \bigvee_{i = 1}^d  \frac{a_{ij,r}^\alpha \tau_i}{A_i}\le \tau_1+\ldots+\tau_d$, 
where the first inequality follows from exchange of the sums $ \sum_{r \in \Z} \sum_{j =1}^d$ with the maximum  $\bigvee_{i = 1}^d$, and the second follows from bounding the maximum $\bigvee_{i = 1}^d $ by a sum $\sum_{i = 1}^d$. This is consistent with \eqref{eq:G bound}.
\end{Rem}
\end{Lem}

\begin{proof}[Proof of Lemma \ref{Lem:MA tail RV}]
Note that $W_{k,j}$'s in \eqref{eq:gaus sub max MA} are marginally identically distributed  by construction. We introduce $W\overset{d}{=} W_{k,j}$.

\medskip
\noindent \textit{Proof of \ref{item:Y_k finite}.}

   For $y\in (0,\infty)$,  
due to the assumption \eqref{eq:alphavarying}, and Potter bounds (e.g., \citet[Proposition 1.4.1]{kulik20heavy}), we have  
\begin{equation}\label{eq:single term tail up bd}
\P\pp{ a_{ij,r} W >y}\le C a_{ij,r}^{\alpha-\epsilon} y^{-\alpha+\epsilon},
\end{equation}
where $C>0$ is a constant that does not depend on $i$, $j$, $r$ or $y$.  So we have by a union bound that 
\begin{align}\label{eq:Y_k,i tail up bd}
\P\pp{Y_{k,i}^{\MF}>y}\le \sum_{r\in \Z}\sum_{j=1}^d \P\pp{ a_{ij,r} W >y}\le  C \pp{\sum_{r\in \Z}\sum_{j=1}^d a_{ij,r}^{\alpha-\epsilon}} y^{-\alpha+\epsilon},
\end{align}
which tends to $0$ as $y\uparrow\infty$ since the expression in the parenthesis in the last bound  is finite due to \eqref{eq:cond1-maxsum}. So it follows that  $Y_{k,i}<\infty$ almost surely.

\medskip

 \noindent \textit{Proof of \ref{item:Y_k tail}.}

First, we claim that
\begin{align}\label{eq:Y_k,i marg tail}
  \P\pp{Y^{\mathrm{M4}}_{k,i}>y}\sim  \sum_{r\in \Z}\sum_{j=1}^d \P\pp{ a_{ij,r} W >y}\sim A_i y^{-\alpha}  L(y) .
\end{align}
 Indeed, 
define for each $k \in \Z$, $i=1,\dots,d$, $m\in \Z_+$, a truncated version of $ Y_{k,i}$  as
\begin{equation} \label{eq:Ym}
   Y_{k,i}^{\MF,(m)}= \bigvee_{|r|\le  m} \bigvee_{j=1}^d a_{ij,r} W_{k-r,j}.
\end{equation}
  Then,  
\begin{equation}\label{eq:Y_k,i tail lw bd}
\P\pp{Y^{\mathrm{M4}}_{k,i}>y}\ge \P\pp{ Y_{k,i}^{\MF,(m)}>y} \sim  \sum_{|r|\le m }\sum_{j=1}^d \P\pp{ a_{ij,r} W >y},  
\end{equation}
{as $y\to\infty$, where the last relation follows from the extremal independence of the family
$\{W_{k-r,j}\}_{|r|\le m,\;j=1,\dots,d}$, in the sense that the probability of two distinct terms
$a_{ij,r}W_{k-r,j}$ and $a_{ij,r'}W_{k-r',j'}$ simultaneously exceeding $y$ is negligible.
}
Then,  the first relation in \eqref{eq:Y_k,i marg tail} follows from the first relation in \eqref{eq:Y_k,i tail up bd} and \eqref{eq:Y_k,i tail lw bd} by first letting $y\rightarrow\infty$ and then $m\rightarrow\infty$,  and the second relation in \eqref{eq:Y_k,i marg tail} can be justified by dominated convergence  theorem and Potter bounds; see, e.g., the proof of \citet[Theorem 2.3]{hsing1986extreme}.

Now, combining \eqref{eq:Y_k,i marg tail} with the fact that $\lim_{n \to \infty} n\P(Y^{\mathrm{M4}}_{k,i}>u_{n,i}(\tau_i))=\tau_i\in (0,\infty)$, in view of \citet[Theorem 1.5.12]{bingham1989regular} (see also \citet[Section 1.3]{kulik20heavy}) 
we know that 
\begin{equation}\label{eq:u_{n,i} asymp}
u_{n,i}(\tau_i)\sim  \pp{\frac{A_i n}{\tau_i}}^{1/\alpha}    \ell(n),
\end{equation}
as $n\rightarrow\infty$, for some slowly varying  $\ell(n)$.
Set 
\begin{equation}\label{eq:b_{r,j}}
b_{r,j,n} (\tau)  = \bigvee_{i = 1}^d  a_{ij,r}/u_{n,i}(\tau_i),
\end{equation}  
where we can and shall assume $u_{n,i}(\tau_i)>0$, for all $i=1,\ldots,d$ and $n\in \Z_+$. We then have
\begin{equation}\label{eq:b u ratio}
 \lim_{n\rightarrow\infty }b_{r,j,n} (\tau) u_{n,j}(1)   = A_j^{1/\alpha} \bigvee_{i=1}^d \frac{a_{ij,r} \tau_i^{1/\alpha}}{A_i^{1/\alpha}}.
\end{equation}
Furthermore, from \eqref{eq:Y_k,i marg tail}, we also infer that
\begin{equation}\label{eq:W u_n tail}
\lim_{n \to \infty} n\P\pp{ W > u_{n,j}(\tau_j) } =\frac{\tau_j}{A_j},\quad j=1,\ldots,d.
\end{equation}

Now we proceed to show \eqref{eq:approx Lem cond 1-proof-eq3}. 
We have,  as $n\rightarrow\infty$,
\begin{align*}
\P\left( Y_0^{\MF} \nleq  u_n(\tau)\right) 
&= 
\P\left( \bigcup_{i=1}^d  \pc{ \bigvee_{r\in \Z} \bigvee_{j=1}^d    a_{ij,r} W_{-r,j} >  u_{n,i}(\tau) }\right)
\sim 
\sum_{r\in \Z} \sum_{j = 1}^d \P\left( b_{r,j,n} (\tau) W >  1 \right),
\end{align*}
where  the  last   step  follows from an argument similar to  that for \eqref{eq:Y_k,i marg tail}. 
Then,
\begin{align}
   \lim_{n \to \infty} n\P\left( Y_0^{\MF} \nleq   u_n(\tau) \right)
    &=
    \lim_{n \to \infty} n\sum_{r \in \Z } \sum_{j = 1}^d \P\left( b_{r,j,n} (\tau) W  > 1\right)
    \\&= \lim_{n \to \infty}
    \sum_{r \in \Z } \sum_{j = 1}^d 
    \frac{\P\left( b_{r,j,n} (\tau) W  > 1 \right)}{
    \P\left( W/u_{n,j}(1) > 1 \right)
    }
    n\P\left( W /u_{n,j}(1) > 1 \right)
    \\&= 
    \sum_{r \in \Z } \sum_{j = 1}^d   \left( \bigvee_{i = 1}^d  \frac{a_{ij,r}^\alpha \tau_i}{A_i}\right),
\end{align}
where in the last step, we first interchange the limit and the summation, which can be justified by the dominated convergence theorem  and Potter bounds in a similar fashion as in the first part of the proof, and then
apply  the regular variation of $W$, \eqref{eq:b u ratio} and \eqref{eq:W u_n tail}. 

\medskip
 \noindent \textit{Proof of \ref{item:Y_km tail}.} 

It follows from a  calculation similar to that for $ \lim_{n \to \infty} n\P\left( Y_0^{\MF} \nleq   u_n(\tau) \right)$ above with an obvious modification.

\medskip
\noindent \textit{Proof of \ref{item:Y_km ext ind}.}

With further explanations given below, we have
\begin{align}
    &
    \lim_{n\rightarrow \infty}  
    \P  \pp{  Y_{1}^{\MF,(m)}\le u_n(\tau),\ldots, Y_{2m}^{\MF,(m)}\le u_n(\tau)  \bigg|  
     Y_{0}^{\MF,(m)}\nleq u_n(\tau) }
    \nonumber
\\&=  
    \lim_{n\rightarrow \infty} \frac{n\P  \pp{  Y_{1}^{\MF,(m)}\le u_n(\tau),\ldots, Y_{2m}^{\MF,(m)}\le u_n(\tau),
     Y_{0}^{\MF,(m)}\nleq u_n(\tau) }}{ n\P \pp{  Y_{0}^{\MF,(m)}\nleq u_n(\tau) } }
    \label{al:eq1-denum-num}
\\&=
      \frac{
    \sum_{j = 1}^d  \bigvee_{|r| \leq m} \bigvee_{i = 1}^d  \left(a_{ij,r}^\alpha \tau_i \right)/A_i }{
    \sum_{|r| \leq m} \sum_{j = 1}^d  \bigvee_{i = 1}^d  \left(a_{ij,r}^\alpha \tau_i\right)/A_i }.   \label{al:eq2-denum-num}
\end{align}
The denominator in \eqref{al:eq1-denum-num} converges to the denominator in \eqref{al:eq2-denum-num} due to \ref{item:Y_km tail}. We are left to show that the numerator in \eqref{al:eq1-denum-num} converges to the numerator in \eqref{al:eq2-denum-num}.
  
Recall $b_{r,j,n}(\tau)= \bigvee_{i = 1}^d  a_{ij,r}/u_{n,i}(\tau_i)$ from \eqref{eq:b_{r,j}} and set
\begin{equation*}
b_{r,j,n}^{(m)}(\tau) := b_{r,j,n} (\tau)  \mathbf{1}{\{ | r | \leq m \}} 
\text{ and }
B_{r,j,n}^{(m)}(\tau) := \max_{1\le k\le 2m} b_{k+r,j,n}^{(m)}(\tau),\quad r\in \Z, \ j=1,\ldots,d, \ m,n \in \Z_+.
\hspace{0.2cm}
\end{equation*}
  Then,   the event  inside the probability sign in the numerator of \eqref{al:eq1-denum-num} can be expressed as
\begin{align}
    &
    \pc{  Y_{1}^{\MF,(m)}\le u_n(\tau),\ldots, Y_{2m}^{\MF,(m)}\le u_n(\tau),
     Y_{0}^{\MF,(m)}\nleq u_n(\tau) }
    \label{eq:event1}
    \\&=
    \left\{  \bigvee_{r=1}^{2m} \bigvee_{l =1}^{d} \frac{Y^{\MF,(m)}_{r,l}}{u_{n,l}(\tau_l)}\le 1  ,  \bigvee_{|s| \le m} \bigvee_{i=1}^d \bigvee_{t=1}^d \frac{a_{it,s} W_{ -s,t}}{u_{n,i}(\tau_i)} > 1 \right\}
    \nonumber
    \\&=
    \bigcup_{|s| \le m} \bigcup_{t=1}^d
    \left\{ \mathcal{A}_{s,t}(n) \cap \mathcal{B}_{s,t}(n) \cap \mathcal{C}_{s,t}(n) \cap \mathcal{D}_{s,t}(n) \right\},
\end{align}
where
\begin{align*}
 \cl{A}_{s,t}(n)&=\left\{ \frac{1}{b^{(m)}_{s,t,n}(\tau)} < W_{-s,t} \le \frac{1}{B^{(m)}_{s,t,n}(\tau)}\right\},\\
 \cl{B}_{s,t}(n)&=\left\{
     W_{-l,j} \le \frac{1}{B^{(m)}_{l,j,n}(\tau)}, l \neq s, j \neq t, |l|\le m, j=1,\ldots,d
      \right\},\\
      \cl{C}_{s,t}(n)&=   \left\{
      W_{-s,j} \le \frac{1}{B^{(m)}_{s,j,n}(\tau)},  j\neq t, j=1,\ldots,d
      \right\}\\
      \cl{D}_{s,t}(n)&= \left\{
      W_{-l,t} \le \frac{1}{B^{(m)}_{l,t,n}(\tau)}, l \neq s, |l|\le m
      \right\},  
\end{align*}
where if a denominator is $0$ in a ratio where the numerator is $1$, the ratio is understood as $\infty$.

We claim that, as $n\rightarrow\infty$,
\begin{align} 
    &
    n\P\pp{  Y_{1}^{\MF,(m)}\le u_n(\tau),\ldots, Y_{2m}^{\MF,(m)}\le u_n(\tau),
     Y_{0}^{\MF,(m)}\nleq u_n(\tau) }
    \nonumber
    \\&=
    n\P\left( \bigcup_{|s| \le m} \bigcup_{t=1}^d
    \left\{ \mathcal{A}_{s,t}(n) \cap \mathcal{B}_{s,t}(n) \cap \mathcal{C}_{s,t}(n) \cap \mathcal{D}_{s,t}(n) \right\} \right)
    \nonumber
    \\&\sim
    n \sum_{|s| \leq m} \sum_{t=1}^d \P\left( 
  \mathcal{A}_{s,t}(n) \cap \mathcal{B}_{s,t}(n) \cap \mathcal{C}_{s,t}(n) \cap \mathcal{D}_{s,t}(n)   \right)
    \label{al:joint-prob-1}
    \\&\sim
    n \sum_{|s| \leq m} \sum_{t=1}^d \P\left( 
    \mathcal{A}_{s,t}(n) \right)
    \label{al:joint-prob-2}
    \\&\sim
    \sum_{t=1}^d 
    \bigvee_{|r| \leq m} \bigvee_{i = 1}^d (a_{it,r}^\alpha \tau_i)/A_i,
    \label{al:joint-prob-3}
\end{align}
where we prove the asymptotics \eqref{al:joint-prob-1}--\eqref{al:joint-prob-3} below.

\noindent\textit{Proof of \eqref{al:joint-prob-2}.}

We aim to approximate the probability in \eqref{al:joint-prob-1} by controlling the lower and upper bounds. An upper bound is simply given by
\begin{align}
    \P\left(
 \mathcal{A}_{s,t}(n) \cap \mathcal{B}_{s,t}(n) \cap \mathcal{C}_{s,t}(n) \cap \mathcal{D}_{s,t}(n)  \right) 
    \le 
    \P\left( \mathcal{A}_{s,t}(n) \right).
\end{align}
For a lower bound note that
\begin{align}
    &
    \P\left( 
    \left\{ \mathcal{A}_{s,t}(n) \cap \mathcal{B}_{s,t}(n) \cap \mathcal{C}_{s,t}(n) \cap \mathcal{D}_{s,t}(n) \right\} \right)
    \nonumber
    \\&=
    \P\left(
    \left\{ \mathcal{A}_{s,t}(n) \backslash (
    \mathcal{A}_{s,t}(n) \backslash \mathcal{B}_{s,t}(n) \cup \mathcal{A}_{s,t}(n) \backslash \mathcal{C}_{s,t}(n) \cup 
    \mathcal{A}_{s,t}(n) \backslash \mathcal{D}_{s,t}(n)) \right\} \right)
    \nonumber
    \\ &\geq
    \P\left( \mathcal{A}_{s,t}(n) \right) 
    -
    \P\left( \mathcal{A}_{s,t}(n) \backslash \mathcal{B}_{s,t}(n) \right) 
    -
    \P\left( \mathcal{A}_{s,t}(n) \backslash \mathcal{C}_{s,t}(n) \right) 
    -
    \P\left( \mathcal{A}_{s,t}(n) \backslash \mathcal{D}_{s,t}(n) \right)
    \label{al:joint-prob-4}
    \\ &=
    \P\left( \mathcal{A}_{s,t}(n) \right) 
    +
    o(n^{-1}),
    \label{al:joint-prob-5}
\end{align}
as $n\rightarrow\infty$, where \eqref{al:joint-prob-5} can be inferred by bounding the probabilities in \eqref{al:joint-prob-4} separately. Indeed, we have by a union bound that
\begin{align}
    \P\left( \mathcal{A}_{s,t}(n) \backslash \mathcal{B}_{s,t}(n) \right)
     \leq
     \sum_{l \neq s, |l|\le m, j\neq t, j=1,\ldots,d}
    \P\left( 
    \left\{  W_{-s,t}> \frac{1}{b^{(m)}_{s,t,n}(\tau)} ,
     W_{-l,j}>\frac{1}{B^{(m)}_{l,j,n}(\tau)} 
    \right\}
    \right).
    \label{al:joint-prob-6}
\end{align}
Then each probability term above is $o(n^{-1})$ due to the extremal independence between  $  W_{-s,t}$ and $  W_{-l,j}$,  and the fact that $\P\left( 
    {b_{s,t,n}(\tau) W> 1} \right)=O(n^{-1}) $ shown in the proof of \ref{item:Y_k tail}.
The remaining probabilities  $\P\left( \mathcal{A}_{s,t}(n) \backslash \mathcal{C}_{s,t}(n) \right)$ and $\P\left( \mathcal{A}_{s,t}(n) \backslash \mathcal{D}_{s,t}(n) \right)$ in \eqref{al:joint-prob-4}  can be handled similarly.

\noindent
\textit{Proof of \eqref{al:joint-prob-3}.}

Introduce $a_{ij,r}^{(m)}=a_{ij,r}  \mathbf{1}{\{ | r | \leq m \}} $.
Then
\begin{align}
n\P\left( \mathcal{A}_{s,t}(n) \right)  
&=
\pb{n\mathbb{P}\left( b^{(m)}_{s,t,n}(\tau) W_{-s,t} >1  \right) - 
n\mathbb{P}\left( B^{(m)}_{s,t,n}(\tau) W_{-s,t} > 1
 \right)}_+
\\&\rightarrow 
 \pb{ \bigvee_{i = 1}^d   \pp{a^{(m)}_{it,s}}^{\alpha} \tau_i /A_i  -  \bigvee_{k = 1}^{2m}  \bigvee_{i = 1}^d  \pp{a^{(m)}_{it,k+s}}^{\alpha} \tau_i/A_i}_+ ,
\end{align}
as $n\rightarrow\infty$, where the last limit relation can be derived following similar arguments as in the proof of  \ref{item:Y_k tail}. Setting $x_s=\bigvee_{i = 1}^d   \pp{a^{(m)}_{it,s}}^{\alpha} \tau_i /A_i$,    using the identity $(x-y)_+=\max(x,y)-y$, $x,y\in \R$ and the fact that $a_{it,k+s}^{(m)}=0$ if $k+s>m$, the limit  displayed above is equal to
\[
\pb{x_s-\max_{r>s} x_r}_+=\max_{r\ge s} x_r- \max_{r\ge s+1 }x_r.
\]
So \eqref{al:joint-prob-3} follows from the telescopic sum 
$$\sum_{s=-m}^m \pb{x_s-\max_{r>s} x_r}_+= \max_{r\ge -m} x_r-\max_{r\ge m+1} x_r = \bigvee_{|r| \leq m} \bigvee_{i = 1}^d (a_{it,r}^\alpha \tau_i)/A_i.$$
\end{proof}

\begin{proof}[Proof of Theorem \ref{Thm:limit max MA}]
We shall apply Lemma \ref{Lem:approx}, for which we need to verify the conditions \eqref{eq:approx Lem cond 1} and \eqref{eq:approx Lem cond 2} there. 

\noindent
\textit{Proof of \eqref{eq:approx Lem cond 1}:}

By \eqref{eq:Y_k,i marg tail}, we have for any $k\in \Z$ and $i=1,\dots,d$ that
\begin{align}
&
\lim_{\epsilon\rightarrow 0}
\limsup_{n\rightarrow\infty }n 
\P\pp{(1-\epsilon)u_{n,i}(\tau_i) <Y^{\mathrm{M4}}_{0,i} \le (1+\epsilon)u_{n,i}(\tau_i)}
\nonumber\leq  
\lim_{\epsilon\rightarrow 0} A_i \left((1 - \epsilon)^{-\alpha} - (1 + \epsilon)^{-\alpha}\right) \tau_i
=0.
\end{align}

\noindent\textit{Proof of \eqref{eq:approx Lem cond 2}:}

Applying \eqref{eq:Y_k,i marg tail}, we have 
\begin{equation}
    \P\pp{Y_k^{\MF,(-m)} >y} \sim A_i^{(-m)} y^{-\alpha}  L(y) 
\end{equation}
as $y\rightarrow\infty$, where 
$
A_i^{(-m)}= \sum_{|r|>m}\sum_{j=1}^d a_{ij,r}^\alpha.
$
Therefore, for any $\delta>0$, we have
\begin{align} \label{eq:eq:approx Lem cond 2-proof-eq1}
&\limsup_n n\P\pp{ | Y^{\mathrm{M4}}_{0,i} - Y^{\MF,(m)}_{0,i} | > \delta u_{n,i}(\tau_i)}
 \le \limsup_{n\rightarrow\infty} n \P\pp{ Y_{0,i}^{\MF,(-m)}  > \delta u_{n,i}(\tau_i)}\\
 &\le  \lim_{n \to \infty} n\sum_{ |r|>m } \sum_{j = 1}^d  \frac{\P\left(  a_{ij,r}  W  > \delta u_{n,i}(\tau_i) \right)}{\P\pp{ W>u_{n,i}(\tau_i) }} n\P\pp{ W>u_{n,i}(\tau_i) }
=  \frac{A_i^{(-m)}\tau_i}{A_i}  \delta^{-\alpha},
\end{align}
where the last limit can be justified based on \eqref{eq:W u_n tail} and regular variation of $W$, following an argument similar as in the proof of Lemma \ref{Lem:MA tail RV}.
Hence \eqref{eq:approx Lem cond 2} follows since $A^{(-m)}_i\rightarrow 0$ as $m\rightarrow\infty$.

With conditions \eqref{eq:approx Lem cond 1} and \eqref{eq:approx Lem cond 2} proved, we shall apply Lemma \ref{Lem:approx} to obtain the desired conclusion.  
In particular,   consider the truncated process $\{Y_{k}^{\MF,(m)} \}$ in \eqref{eq:Y-m,Ym}, where each $Y_{k}^{\MF,(m)}$ is a measurable function of the underlying Gaussian  $(X_{k-m},\ldots,X_{k+m})$.
By Proposition \ref{Pro:m ext dep gaus sub multi},
the process is $2m$-extremally-dependent. Then, applying Theorem \ref{Thm:max} to the  $\{Y_{k}^{\MF,(m)} \}$   yields convergence 
\begin{equation} \label{eq:sample_extreme_max_res m}
    \lim_{n \to \infty} \P\left(  M_{n}^{(2m)} \leq u_{n}(\tau) \right) 
    =
    H_{2m}(\tau)^{\theta_{2m}(\tau)},
\end{equation}
where $M_{n}^{(2m)}$ is defined as $M_n$ in \eqref{eq:M_n component} but with $Y_k$ replaced by $Y_{k}^{\MF,(m)}$, and 
\[
H_{2m}(\tau)=  \exp\pc{ - \sum_{|r| \le m } \sum_{j =1}^d \bigvee_{i = 1}^d  \frac{a_{ij,r}^\alpha \tau_i}{A_i}
}
\]
in view of \eqref{eq:approx Lem cond 1-proof-eq3 m} and \eqref{eq:limit G(tau)}, and $\theta_{2m}(\tau)$ is  as in  \eqref{eq:theta_2m}. Now to conclude Theorem \ref{Thm:limit max MA} based on \eqref{eq:tri approx max}, it suffices to note that  $H_{2m}(\tau)\rightarrow H(\tau)$ in \eqref{eq:limit G(tau) max MA}, and $\theta_{2m}(\tau)\rightarrow \theta(\tau)$ in \eqref{eq:move-max-extremal-index}, as $m\rightarrow\infty$.
\end{proof}

\subsection{Proof of Theorem \ref{Thm:gap point proc}}\label{sec:pf Thm:gap point proc}

Let $\cal{I}$ be the collection of finite unions of sets, each of the form $\Delta\times T $, where $\Delta$ is a bounded Borel subset of $E$ with $\Lambda_{r,p}\pp{\partial \Delta}=0$, and $T$ is a bounded interval on $[0,\infty)$. 
The collection  $\cal{I}$ forms a dissecting semi-ring  on  $E\times [0,\infty)$  by \citet[Lemma 1.9]{kallenberg2017random} and hence
in view of \citet[Theorem 4.18]{kallenberg2017random}, it suffices to show for any $I\in \cal{I}$, as $n\rightarrow\infty$, 
\begin{equation}\label{eq:mean}
\esp \xi_{n} (I)  \rightarrow \esp \xi  (I)
\end{equation}
and
\begin{equation}\label{eq:zero prob}
\proba\pp{\xi_{n} (I)=0 } \rightarrow  \proba\pp{\xi  (I)=0}=\exp\pp{-\mu_r(I)},
\end{equation}
where $$\mu_r=\Lambda_{r,p}\times \Leb,$$  $\Lambda_{r,p}$ as in \eqref{eq:block conv}.
Verification of the relation \eqref{eq:mean}, and in fact, more generally, the vague convergence of the mean measures $\esp \xi_{n}$ to $\mu_r=\esp \xi $,  is standard based on the assumption \eqref{eq:block conv}, whose details we omit; see, e.g., the proof of \citet[Theorem 7.2.1]{kulik20heavy}.

We verify the relation \eqref{eq:zero prob}   for a fixed $I \in\cal{I}$ below. 
 One can construct a partition of $I$:
\begin{equation}\label{eq:I partition}
I=\bigcup_{\ell=1}^k \pp{\Delta_\ell  \times  T_\ell}=:\bigcup_{\ell=1}^k  I_\ell,
\end{equation}
where 
each $\Delta_\ell$ satisfies $\Lambda_{r,p}(\partial\Delta_\ell)=0$,  and 
$T_\ell$'s are disjoint bounded   intervals with positive lengths ordered from left to right as $\ell$ increases. Observe that $I\subset B \times [0,L] $ for some  bounded  Borel set $B\subset E$ with finite $\Lambda_{r,p}(B)$ value and some finite $L>0$. Fix $\epsilon\in (0,  \Lambda_{r,p}(B)L/2 ]$.  The partition $\{T_\ell\}$ can be refined,  if necessary,  to   ensure 
\begin{equation}\label{eq:max Leb T_ell}
\max_{1\le \ell \le k}\Leb(T_\ell)\le (2\varepsilon)/(\Lambda_{r,p}(B)^2L)\le  1/\Lambda_{r,p}(B), 
\end{equation}
so that   
\begin{equation}\label{eq:quadratic control}
\sum_{\ell=1}^k \frac{1}{2} \Lambda_{r,p}(\Delta_\ell)^2 \Leb(T_\ell)^2 \le  \frac{1}{2}\Lambda_{r,p}(B)^2 \pp{\max_{1\le \ell \le k}\Leb(T_\ell)} L \le \varepsilon
\end{equation}
and 
\begin{equation}\label{eq:mu_r I_ell <= 1}
\mu_r(I_\ell)\le  \Lambda_{r,p}(B) \Leb(T_\ell) \le 1, \quad \ell=1,\ldots,k. 
\end{equation}

Now, for $Y_{n}(i)$ as in \eqref{eq:Y_n(j)}, introduce
\begin{equation} \label{eq:def:Z_ell}
 Z_{\ell,n}(i)  =\ind\{Y_{n}(i)\in \Delta_\ell \}, \quad i=1,\ldots,n,\ \ell=1,\ldots,k.
\end{equation}
Then, the point process \eqref{eq:pointproces-Y_n(j)} can be written as
\begin{align} 
\xi_{n}(I_\ell )=\sum_{i: i(p+r)/n \in T_\ell } Z_{\ell,n}(i).
\end{align} 
By a telescopic decomposition, we get
\begin{align}
& 
\ind\{\xi_{n} (I)=0 \} -\prod_{\ell=1}^k \pp{1-  \mu_r(I_\ell)} 
\nonumber
\\&=  \prod_{\ell=1}^k \ind\{\xi_{n} (I_\ell )=0\}  -\prod_{\ell=1}^k \pp{1-  \mu_r(I_\ell)} 
\notag 
\\&= \sum_{s=1}^{k} \pp{\prod_{\ell=1}^{s-1} \ind\{\xi_{n} (I_\ell )=0\}} \Big[  \ind\{\xi_{n} (I_s )=0\}-  \pp{1-  \mu_r(I_s)}   \Big]  \pp{\prod_{\ell=s+1}^{k} \pp{1-  \mu_r(I_\ell)}}.
\label{eq:I=0 tele decomp}
\end{align}
For the $s$th term in \eqref{eq:I=0 tele decomp}, the middle factor    can be expressed as
\begin{equation} \label{eq:decomposition:EFG}
   \ind\{\xi_{n} (I_s )=0\}-  \pp{1-  \mu_r(I_s)}    =  E_{s,n} +F_{s,n}+G_{s,n},
\end{equation}
with deterministic part
\[
E_{s,n}:=  \pp{1- \esp\xi_{n}(I_s )} -\pp{1-  \mu_r(I_s)} ,
\]
and the two random variables
\[
F_{s,n}:= \esp \xi_{n}(I_s )- \xi_{n}(I_s )   ,\quad
G_{s,n}:= \ind\{\xi_{n}(I_s )=0\} -1 + \xi_{n}(I_s )\ge 0.
\]
We consider the three summands in \eqref{eq:decomposition:EFG} separately.  Note that  $\Lambda_{r,p}(\partial\Delta_s)=0$ and $\Leb( \partial T_s )=0$    imply $\mu_r(\partial I_s)=0$, $s=1,\ldots,k$. So by  \eqref{eq:mean}, we have that the first summand in \eqref{eq:decomposition:EFG} satisfies
\begin{equation}\label{eq:E_mn vanish}
\lim_{n\to \infty}   E_{s,n} = 0.
\end{equation}
For $F_{s,n}$ in \eqref{eq:decomposition:EFG}, introduce
\begin{equation}\label{eq:filteration}
\cal{F}_t=\sigma(\varepsilon_j, j\le tr+(t-1)p), \ t\in \Z,
\end{equation}
where  $\{\varepsilon_j\}$ are the i.i.d.\ Gaussian innovations of the linear process \eqref{equality_general_linear_process}. In view of \eqref{eq:Y_n(j)},
the indicator $ Z_{\ell,n}(i)$ in \eqref{eq:def:Z_ell} is a measurable function of 
$$
\pp{    Y_{(i-1)(r+p)+1} ,\ldots,     Y_{ir+(i-1)p}}.
$$ 
Therefore, due to the assumption \eqref{eq:gaus sub Y multi} and the linear representation \eqref{equality_general_linear_process}, $Z_{\ell,n}(i)$ is measurable with respect to $\cl{F}_{t}$ if $i\le t$. 
Recall  that each $I_s=\Delta_s\times T_s$ and that $T_s$'s are ordered disjoint intervals as described below \eqref{eq:I partition}. Set $t(s,n)=\max\{i\in \Z:\  i(p+r)/n\in  T_{s-1}\}$. Then  each $\xi_n(I_\ell)=\sum_{\frac{i(p+r)}{n}\in T_\ell } Z_{\ell,n}(i)$, $\ell\le s-1$, is measurable with respect to $\mathcal{F}_{t(s,n)}$. So
\begin{equation}\label{eq:F_mn vanish}
\lim_{n\to \infty} \left| \esp\left[\pp{\prod_{\ell=1}^{s-1} \ind\{\xi_{n}(I_\ell )=0\}} F_{s,n} \right] \right|\le \lim_{n\to \infty} \esp\left|\esp\left[   F_{s,n} \mid  \mathcal{F}_{t(s,n)} \right] \right|   =0,
\end{equation}
where  the last relation is due to stationarity, a triangle inequality and the relation \eqref{eq:le_statement1} in Lemma \ref{Lem:key} below.

For the third summand $G_{s,n}$ in \eqref{eq:decomposition:EFG}, note that $G_{s,n}=0$ if $\xi_n(I_s)=0$ or $1$,  and $G_{s,n}=\xi_n(I_s)-1$ otherwise. Hence,   
\[
G_{s,n}\le {\xi_n(I_s) \choose 2 }=\sum_{i<j, \, (\frac{i(p+r)}{n},\frac{j(p+r)}{n})\in T_s\times T_s } Z_{s,n}(i) Z_{s,n}(j),
\]
such that,
\begin{align}
&\limsup_{n\to \infty}\esp\left[\pp{\prod_{\ell=1}^{s-1} \ind\{\xi_{n}(I_\ell )=0\}} G_{s,n} \right]\notag \\
&\le
\limsup_{n\to \infty} \esp \left[ \sum_{i<j, \, \left(\frac{i(p+r)}{n},\frac{j(p+r)}{n}\right)\in T_s\times T_s} Z_{\ell,n}(i) Z_{\ell,n}(j)\right]\le  
\frac{1}{2} \Lambda_{r,p}(\Delta_s)^2 \Leb(T_s)^2,
\label{eq:G_mn bound}
\end{align}
where the inequality \eqref{eq:G_mn bound} follows from Lemma \ref{Lem:key} below.

Combining \eqref{eq:quadratic control}, \eqref{eq:mu_r I_ell <= 1}, \eqref{eq:I=0 tele decomp}, \eqref{eq:decomposition:EFG}, \eqref{eq:E_mn vanish}, \eqref{eq:F_mn vanish} and \eqref{eq:G_mn bound}, we conclude that
\[
\limsup_{n\to \infty} \left| \proba\pp{\xi_{n}(I)=0 } -\prod_{\ell=1}^k \pp{1-  \mu_r(I_\ell)}\right|\le \epsilon.
\]
It follows from  \eqref{eq:mean} and the elementary inequality $  1-x  \le e^{-x}\le1-x+x^2/2 $, $x\ge 0$, that
\[
\prod_{\ell=1}^k \pp{1-  \mu_r(I_\ell)}=\exp(- \mu_r(I))+o(1),
\]
where $o(1)$ is as $\varepsilon\rightarrow 0$, once noticing from \eqref{eq:max Leb T_ell} that $\max_{\ell } \mu_r(I_\ell)\le \Lambda_{r,p}(B) \max_{\ell}  \pp{\Leb\pp{T_\ell}} \le  (2\epsilon)/(\Lambda_{r,p}(B)L)  $. 
Hence, the desired relation \eqref{eq:zero prob} follows. 

\begin{Lem}\label{Lem:key}
Under the setup of Theorem \ref{Thm:gap point proc},
suppose $T$ is a bounded interval with $\inf T\ge 0$, and $\Delta$ is a bounded Borel set in $E$ with $\Lambda_{r,p}(\partial \Delta)=0$.
Introduce
\begin{equation}
 Z_n(i)  =
\ind\{Y_{n}(i)\in \Delta \}, \quad i=1,\ldots,n,
\end{equation}
where $Y_n(i)$ is as in \eqref{eq:Y_n(j)}.
Then,
\begin{align} \label{eq:le_statement1}
 \lim_{n\to \infty}   \sum_{i(p+r)/n\in T} \|\esp\left[     Z_n(i)   \mid  \mathcal{F}_{0}  \right] - \esp Z_n(i)  \|_1   =0,
\end{align}
where $\cl{F}_0$ is as in \eqref{eq:filteration}.
Furthermore, 
\begin{align} \label{eq:le_statement2}
\limsup_{n\to \infty} \esp \left[ \sum_{(i(p+r)/n,j(p+r)/n)\in T\times T, \ i<j } Z_n(i)Z_{n}(j)\right] \le  \frac{1}{2} \Lambda_{r,p}(   \Delta) ^2 \Leb(T)^2.
\end{align}
\end{Lem}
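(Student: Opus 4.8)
The plan is to reduce both displays to facts about a single block functional $Z_n(i)=\mathbf 1\{Y_n(i)\in\Delta\}$, exploiting that it is the indicator of an event whose probability $p_n:=\P(Y_n(1)\in\Delta)$ is of order $1/n$ — indeed $p_n\sim\frac{r+p}{n}\Lambda_{r,p}(\Delta)$ by \eqref{eq:block conv} — and that the Gaussian-subordination structure makes the Mehler transform and hypercontractivity available. First set up the Gaussian Hilbert spaces. Because $p\ge m$, the block $(Y_{(i-1)(r+p)+1},\dots,Y_{ir+(i-1)p})$ is a measurable function of $\{X_k:\ k\in W_i\}$ where $W_i$ is an interval of $r+m$ consecutive integers with $\max W_i=ir+(i-1)p$; let $H_i$ be the finite-dimensional Gaussian Hilbert space these $X_k$'s span, and $\mathcal H_0$ the Gaussian Hilbert space generated by $\{\varepsilon_j:\ j\le -p\}$, so $\sigma(\mathcal H_0)=\mathcal F_0$ (see \eqref{eq:filteration}). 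Then $Z_n(i)\in L^2(H_i)$; moreover $\max W_i\le tr+(t-1)p$ precisely when $i\le t$, so $Z_n(i)$ is $\mathcal F_t$-measurable for $i\le t$ (the measurability invoked in the proof of Theorem~\ref{Thm:gap point proc}). Recalling \eqref{def:QH1H2}, $\esp[Z_n(i)\mid\mathcal F_0]-p_n=Q_{H_i\mathcal H_0}(Z_n(i))$, and for $i<j$ — whose blocks have disjoint $W_i,W_j$, again because $p\ge m$ — we have $\Cov\big(Z_n(i),Z_n(j)\big)=\esp\big[(Z_n(i)-p_n)\,Q_{H_jH_i}(Z_n(j))\big]$.

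The two key inequalities follow from Propositions~\ref{Pro:rho equal}, \ref{Pro:proj mehler} and \ref{Pro:hyper holder}, the hypercontractivity bound \eqref{eq:M_r hypercontr}, and the fact that $Z_n(i)$ is $\{0,1\}$-valued, so $\|Z_n(i)\|_q^q=p_n$ and $\esp M_a(Z_n(i))=p_n$ for all $q,a$. With $\rho_i:=r(H_i,\mathcal H_0)$ and $\bar\rho_{ij}:=r(H_i,H_j)$ denoting the canonical correlations (equal to the maximal correlations by Proposition~\ref{Pro:rho equal}),
\[
\big\|Q_{H_i\mathcal H_0}(Z_n(i))\big\|_2^2\le\Var\!\big(M_{\rho_i}(Z_n(i))\big)=\big\|M_{\rho_i}(Z_n(i))\big\|_2^2-p_n^2\le p_n^{2/(1+\rho_i^2)}-p_n^2,
\]
and, by Proposition~\ref{Pro:hyper holder} with $p=q=1+\bar\rho_{ij}$, $\esp[Z_n(i)Z_n(j)]\le p_n^{2/(1+\bar\rho_{ij})}$; keeping the subtracted $p_n^2$ in the first bound is crucial. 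The canonical correlations are controlled via the second-order structure: expanding linear combinations $\sum_{k\in W_i}a_k'X_k$ through \eqref{equality_general_linear_process}, applying Cauchy–Schwarz to the cross-innovation terms, and using the non-singularity \eqref{eq:nonsingular} (via Lemma~\ref{le:multiBrochwellDavis}) to bound the block covariance matrices from below, one obtains $\rho_i^2\le C_r\big\|\sum_{l\ge (i-1)(r+p)}\Psi_l\Psi_l'\big\|$ and $\bar\rho_{i,i+\delta}\le C_r\sup_{h\ge(\delta-1)(r+p)}\|\Gamma(h)\|$ for a constant $C_r$ depending only on $r$; hence, by Lemma~\ref{le:behavior_acf}, $\rho_i^2=o(1/\log i)$ and $\bar\rho_{i,i+\delta}=o(1/\log\delta)$, and also $\bar\rho_*:=\sup_{\delta\ge1}\bar\rho_{i,i+\delta}<1$ (each term being $<1$ by non-singularity, and tending to $0$).

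Now combine these with $\log(1/p_n)\sim\log n$, splitting each sum at a fixed small power of $n$. For \eqref{eq:le_statement1}, fix $c'\in(0,\tfrac12)$: the indices $i\le n^{c'}$ contribute at most $n^{c'}p_n^{1/2}=O(n^{c'-1/2})\to0$ (using $\|Q_{H_i\mathcal H_0}(Z_n(i))\|_2\le p_n^{1/(1+\rho_i^2)}\le p_n^{1/2}$), while for $i>n^{c'}$ one has $\rho_i^2=o(1/\log i)=o(1/\log n)$, so $p_n^{2/(1+\rho_i^2)}-p_n^2=p_n^2(e^{o(1)}-1)=o(p_n^2)$ uniformly, giving $\|\esp[Z_n(i)\mid\mathcal F_0]-p_n\|_1\le\|Q_{H_i\mathcal H_0}(Z_n(i))\|_2=o(p_n)$; summing over the $O(n)$ such blocks yields $o(np_n)=o(1)$. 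For \eqref{eq:le_statement2}, put $N_n(T)=\#\{i:\ i(p+r)/n\in T\}$ (so $N_n(T)p_n\to\Lambda_{r,p}(\Delta)\Leb(T)$) and fix $c\in\big(0,\tfrac{1-\bar\rho_*}{1+\bar\rho_*}\big)$: pairs with $j-i=\delta\le n^{c}$ contribute at most $N_n(T)\,n^{c}\,p_n^{2/(1+\bar\rho_*)}=O\big(n^{1+c-2/(1+\bar\rho_*)}\big)\to0$, whereas for $\delta>n^{c}$ one has $\bar\rho_{ij}=o(1/\log n)$, hence $\esp[Z_n(i)Z_n(j)]\le p_n^{2/(1+\bar\rho_{ij})}=p_n^2(1+o(1))$ uniformly, and $\sum_{i<j,\,j-i>n^{c}}\esp[Z_n(i)Z_n(j)]\le(1+o(1))\binom{N_n(T)}{2}p_n^2\to\tfrac12\Lambda_{r,p}(\Delta)^2\Leb(T)^2$. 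Adding the two parts gives the limsup bound in \eqref{eq:le_statement2}.

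The delicate part is the sharpness forced by \eqref{eq:le_statement2}, whose target is exactly the Poissonian second factorial moment $\tfrac12\Lambda_{r,p}(\Delta)^2\Leb(T)^2$: the hypercontractive estimates must be asymptotically tight, and the split of the block-index sum must be arranged so that the $o(n)$-many ``near'' blocks — where the canonical correlation is only known to be $<1$, not $o(1)$ — contribute negligibly thanks to $p_n$ being of order $1/n$, while the $\Theta(n)$ ``far'' blocks collectively reproduce the independent-case value to leading order. Choosing the exponents $c,c'$ appropriately (in terms of $\bar\rho_*$) and verifying the uniformity of all the $o(1)$ terms is precisely the refinement of the scheme of \cite{sly2008nonstandard} that the slowly-decaying covariance assumption \eqref{equality_long_range_dep_linear_process} necessitates.
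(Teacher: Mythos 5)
Your proposal is correct and follows essentially the same route as the paper's proof: both rest on Propositions \ref{Pro:rho equal}, \ref{Pro:proj mehler} and \ref{Pro:hyper holder}, control the relevant canonical correlations through the linear representation \eqref{equality_general_linear_process}, Lemma \ref{le:multiBrochwellDavis} and Lemma \ref{le:behavior_acf}, and split the block indices into a negligible ``near'' range and a ``far'' range where the correlations are $o(1/\log n)$ so that the hypercontractive bounds become asymptotically tight. The only (harmless) differences are technical: for \eqref{eq:le_statement1} you apply hypercontractivity directly at exponent $1+\rho_i^2$ and keep the subtracted $p_n^2$, where the paper instead routes through the Mehler monotonicity \eqref{eq:Mehler mono} with the intermediate scale $\ell_n=\sqrt{1/\log n}$; and for \eqref{eq:le_statement2} you sum over lags discretely where the paper passes to an integral and uses dominated convergence.
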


\begin{proof}

\noindent\textit{Proof of \eqref{eq:le_statement1}}.

Suppose $T\subset [0,U]$ for some $U\in (0,\infty)$.
By \eqref{eq:block conv}, for any $i\in \Z$, as $n\rightarrow\infty$, 
we have 
\begin{equation} \label{al:convergenceesp}
n\esp Z_n(i) = 
   n
\proba ( Y_{n}(i) \in \Delta)
\to 
(r+p) \Lambda_{r,p}(\Delta) .
\end{equation}
We first prove \eqref{eq:le_statement1}. 
Fix $\delta\in (0,U/(p+r)) $. We bound the left-hand side of \eqref{eq:le_statement1} via triangle   inequality  by
\begin{equation*}
 \sum_{0\le i \le \delta n }  \left\|   \esp\left[  Z_n(i)  \mid  \mathcal{F}_{0} \right]  - \esp Z_n(i)  \right\|_1 +  \sum_{\delta n<i\le  U n/(p+r) } \left\|   \esp\left[   Z_n(i)    \mid  \mathcal{F}_{0}  \right] - \esp Z_n(i)  \right\|_1 =: \mathrm{I}_n + \mathrm{II}_n.
\end{equation*}
By      \eqref{al:convergenceesp}, we have
\begin{equation}\label{eq:term I delta}
\limsup_{n\rightarrow\infty} \mathrm{I}_n\le \limsup_{n\rightarrow\infty} \sum_{0\le i\le \delta n} 2 \| Z_n(i)\|_1 \le 2\delta (r+p) \Lambda_{r,p}(\Delta).
\end{equation} 

On the other hand, by  triangle inequality and bounding the $L^1$ norms with $L^2$ norms,
\begin{align}\label{eq:II_n bound}
\mathrm{II}_n\le  \sum_{\delta n<i\le Un/(p+r)  } \| \esp\left[ Z_n(i) \mid   \cal{F}_0 \right] -\esp Z_n(i)  \|_2.
\end{align}
Recall that $\cal{F}_0=\sigma(\varepsilon_i: \  i\le -p )$, and $ Z_n(i)$ is a measurable function of
\begin{equation}\label{eq:bf X_i}
X(i)=X_{r,p,m}(i):=\pp{    X_{(i-1)(r+p)-m+1} ,\ldots,     X_{ir+(i-1)p}}.
\end{equation}
We aim to apply Proposition \ref{Pro:proj mehler}  with $H_1$ and $H_2$  as the closed subspaces spanned by $X(i)$ and  $(\varepsilon_j: \  j\le -p )$, respectively, and one can take the total space $H$ as spanned by $\{ \varepsilon_i \}_{i\in \Z}$.  Then,   
\begin{align}\label{eq:apply Q bound}
    \| \esp\left[ Z_n(i)\mid   \cal{F}_0  \right] -\esp Z_n(i)  \|_2^2\le \Var\pp{M_{\rho_i} ( Z_n(i)-\esp Z_n(i)) },
\end{align}
where $M_{\rho_i}$ is the Mehler transform with respect to $H$ and $\rho_i$ is the maximal correlation between $H_1$ and $H_2$; see \eqref{eq:M_a}. In view of Proposition \ref{Pro:rho equal}, we can identify $\rho_i$ with the norm of the operator $P_{H_1H_2}$ as 
\begin{equation}\label{eq:rho_i opt}
\rho_i= \sup \| \esp[ a_1' X_{(i-1)(r+p)-m+1}+\ldots    +a_{r+m}'  X_{ir+(i-1)p} \mid \cal{F}_0]\|_2,
\end{equation}
where the supremum above is taken over all real vectors $a_1,\ldots,a_{r+m}$ such that 
\begin{equation}\label{eq:constraint} 
\|a_1' X_{(i-1)(r+p)-m+1}+\ldots    + a'_{r+m}  X_{ir+(i-1)p}\|_2^2\le 1.
\end{equation}
Note that for $t\ge -p$, we have
$\esp[  X_{t}    \mid \cal{F}_0]= \sum_{s\le -p} \Psi_{t-s}\varepsilon_{s}$.
Therefore,    for $i>\delta n$ and $n$ large enough, we have
\begin{align}
   &   \left\| \sum_{j=1}^{r+m} a'_j \esp\left[ X_{(i-1)(r+p)-m+j}  \mid \cal{F}_0\right]\right\|_2^2 
   =   \left\|  \sum_{s\le -p}   \sum_{j=1}^{r+m}  a'_j \Psi_{(i-1)(r+p)-m+j-s}\varepsilon_s \right\|_2^2 \notag \\
   &=  \sum_{s\le -p}  
   \left\|\sum_{j=1}^{r+m}  a'_j \Psi_{(i-1)(r+p)-m+j-s}\right\|^2\le (r+m) \sum_{s\le -p}  
\sum_{j=1}^{r+m}     \left\| a'_j \Psi_{(i-1)(r+p)-m+j-s}\right\|^2
   \nonumber
   \\&\le   
   (r+m) \pp{\sum_{j=1}^{r+m} \| a_j\|^2}  \pp{ \sum_{i> c   n}  \|\Psi_i'\|_{\mathrm{op}}^2 }, \label{eq:upp bound}
\end{align} 
where $ \|\cdot\|_{\mathrm{op}}$ denotes the operator (or spectral) norm of a matrix, and  $c>0$ can be taken as $ \delta(r+p)/2$.
On the other hand, 
\begin{align}\label{eq:lower bound}
    \|a_1'X_{(i-1)(r+p)-m+1}+\ldots  +  a_{r+m}'  X_{ir+(i-1)p}\|_2^2
\ge  
     \pp{\|a_1\|^2+\ldots+\|a_{r+m}\|^2}\lambda_{\min} (r+m),
\end{align}
where $\lambda_{\min} (r+m)$ is the minimum eigenvalue of the covariance matrix $\pp{\Gamma(i-j)}_{1\le i,j\le r+m}$. Note that $\lambda_{\min} (r+m)>0$ by Lemma \ref{le:multiBrochwellDavis} since   $\Gamma_{jj}(h)\rightarrow 0$ as $h\rightarrow \infty$ for the linear moving average \eqref{equality_general_linear_process}.
Combining \eqref{eq:rho_i opt}, \eqref{eq:constraint}, \eqref{eq:upp bound}   and \eqref{eq:lower bound},  we conclude for    $n$ sufficiently large that
\begin{equation}\label{eq:bar rho_n}
  \sup_{i>\delta n} \rho_i^2 \le \left[   \frac{r+m}{\lambda_{\min}(r+m)} \pp{  \sum_{i>c   n}   \|\Psi_i'\|^2_{\mathrm{op}} }\right]  \wedge 1=:\widebar{\rho}_n^2.
\end{equation}
Set $\ell_n=\sqrt{1/\log(n)}$, $n\ge 3$.  So by Lemma \ref{le:behavior_acf}, we have
\begin{equation}\label{eq:bar rho ell}
\widebar{\rho}_n^2/\ell_n^2\rightarrow 0,
\end{equation}
as $n\rightarrow\infty$.
Hence in view of \eqref{eq:apply Q bound}, \eqref{eq:bar rho_n} and applying \eqref{eq:Mehler mono} twice,
we have    for $i>\delta n$ and $n$  large enough that 
\begin{equation}\label{eq:Q Z_n(i) bound}
   \| \esp\left[ Z_n(i)\mid   \cal{F}_0  \right] -\esp Z_n(i)  \|_2^2 \le  \var\pp{ M_{\widebar{\rho}_n}( Z_n(i))} \le  \frac{\widebar{\rho}_n^2}{\ell_n^2} \var\pp{ M_{\ell_n}( Z_n(i))}.
   \end{equation}
On the other hand, by \eqref{eq:M_r hypercontr}, we have
\begin{align}\label{eq:var M bound}
 \var\pp{ M_{\ell_n}( Z_n(i))}  \le  \|    M_{\ell_n}( Z_n(i))  \|_2^2\le     \| Z_n(i)\|_{1+\ell_n^2}^2 =O(n^{-2/(1+\ell_n^2)}),
\end{align}
as $n\rightarrow\infty$, where the last step follows from \eqref{al:convergenceesp}  and the   fact that 
\[
n^{2-2/(1+\ell_n^2)}=\exp\pp{[2\ell_n^2/(1+\ell_n^2)]\log(n)}\rightarrow e^{2},
\] 
as $n\rightarrow\infty$.  Hence  putting together \eqref{eq:II_n bound}, \eqref{eq:bar rho ell}, \eqref{eq:Q Z_n(i) bound} and \eqref{eq:var M bound}, we conclude
\[
\mathrm{II}_n\rightarrow 0,
\]
as $n\rightarrow\infty$.  Combining this and \eqref{eq:term I delta} where $\delta>0$ can be arbitrarily small, we establish \eqref{eq:le_statement1}.

\medskip
 \noindent\textit{Proof of \eqref{eq:le_statement2}}.

  By \eqref{eq:hyper CS} and stationarity, we have for any $ i<j$,
\[
\esp Z_n(i) Z_n(j) \le 
(\P\pp{Y_{n}(1)\in \Delta } )^{2/\pp{1+\gamma_X(j-i)}},
\]
where 
\[
\gamma_X(h)=\rho(X(h), X(0) ), \quad h\ge 0,
\]
with $X(h)$ as in \eqref{eq:bf X_i}, and  $\rho(\cdot,\cdot)$ stands for the canonical correlation; see Proposition \ref{Pro:rho equal}. In view of \eqref{al:convergenceesp}, it remains to show
\begin{equation}\label{eq:conv to half}
\sum_{(i(r+p)/n,j(r+p)/n)\in T\times T, \ i<j }
\pp{ (r+p) \Lambda_{r,p}(\Delta) n^{-1} }^{2/ (1+ \gamma_X(j-i) )}\rightarrow \frac{1}{2} \Lambda_{r,p}(   \Delta) ^2 \Leb\pp{T}^2, 
\end{equation}
as $n\rightarrow\infty$. Recall the covariance function $\Gamma$ for $\{ X_i \}$ satisfies $\Gamma_{jj}(h)\rightarrow 0$  as $h\rightarrow\infty$.
 This implies 
 \begin{equation}\label{eq:gamma_Y<1}
 \gamma_X(h)<1, \quad \text{ for all } h\ge 1;
 \end{equation}
 otherwise,  a linear combination of the components of $X(h)$ is equal to a linear combination of the components of $X(0)$ almost surely, a contradiction in view of Proposition \ref{Pro:m ext dep gaus sub multi}.
  On the other hand, by \citet[Lemma 3.4]{bai2016unified},  the assumption \eqref{eq:Gammamultiasymp} and Lemma \ref{le:multiBrochwellDavis}, we have
\begin{align}\label{eq:gamma_Y log rate}
    \gamma_X(h)
    = o\pp{ 1/\log(h)}, 
\end{align}
as $h\rightarrow\infty$. Now, set $C=(r+p)\Lambda_{r,p}(\Delta)$, and let $H(n)$ denote the difference between the largest and smallest integers in the interval $Tn/(r+p)$. Note that 
\begin{equation}\label{eq:H asymp}
\lim_{n \to \infty} \frac{H(n)}{n}= \frac{\Leb(T)}{(r+p)}, \quad \text{and } \quad 
\frac{H(n)}{n}\le \frac{\Leb(T)}{(r+p)}+2=: \widebar{H} 
\end{equation}
for all $n\in \Z_+$. Then the left-hand side of \eqref{eq:conv to half} can be expressed as
\begin{align}
&
\frac{C^2}{n} \sum_{  k=1 }^{H(n)} \left( \frac{H(n)}{n}- \frac{k}{n} \right) \pp{\frac{n}{C}}^{ 2\gamma_X(k) / (1+ \gamma_X(k) ) }=
C^2\int_{0}^{\infty} f_{n}(x) dx,
\label{al:second_claim_al2.2}
\end{align}
where
\begin{align}\label{eq:f_n}
f_{n}(x) = \left( \frac{H(n)}{n} - \frac{\lceil xn \rceil}{n} \right)_+ \exp\left\{\frac{2\log ( n /C )   \gamma_{X}( \lceil xn \rceil ) }  {1 +  \gamma_{X}( \lceil xn \rceil)  } \right\}.
\end{align} 
 To apply the dominated convergence theorem, we need to separate a singularity near $x=0$.   Due to \eqref{eq:gamma_Y<1} and \eqref{eq:gamma_Y log rate},  we have $\beta:=\sup_{k\ge 1}2\gamma_X(k)/(1+\gamma_X(k))<1$. 
Choose $\beta_0\in (\beta,1)$. Then, using monotonicity, we have
\begin{equation*} 
\int_0^{n^{-\beta_0}} f_n(x) dx\le  \widebar{H}   n^{1-\beta_0}  \pp{\frac{n}{C}}^{\beta-1}\rightarrow 0 ,
\end{equation*}
as $n\rightarrow\infty$.
We then apply the dominated convergence theorem to justify the convergence 
\begin{equation}\label{eq:DCT term}
\int_{0}^\infty \ind\{x >n^{-\beta_0}\} f_n(x) dx\rightarrow \frac{1}{2}  \pp{\frac{\Leb(T)}{r+p}}^2,
\end{equation}
as $n\rightarrow\infty$, and hence ultimately \eqref{eq:conv to half} when combining the relations above.  In fact, in view of \eqref{eq:gamma_Y log rate},   for all $x>0$, the expression of the exponent within $\exp   \{\cdots\}$ in \eqref{eq:f_n}  converges to $0$. Therefore for any $x>0$, taking into account also \eqref{eq:H asymp} we have $f_{n}(x)\rightarrow  (\Leb(T)/(r+p) -x)_+$,  as $n\rightarrow\infty$, whose integral is exactly the right-hand side of \eqref{eq:DCT term}. 
Next,    the choice $\beta_0<1$  and  \eqref{eq:gamma_Y log rate} imply that  
$$
C_0:=\sup_{n\ge 1,\, x>n^{-\beta_0}}  \log(n/C) \gamma_X(\lceil xn \rceil )<\infty.
$$
So in view of \eqref{eq:f_n}, a dominating bound for the integrand in \eqref{eq:DCT term}  can be found as 
\begin{align*}
\ind\{x>n^{-\beta_0} \}|f_{n}(x)| \leq \widebar{H}  \exp\left\{2 C_0 \right\} \ind\{ x\in [0, \widebar{H}] \}. 
\end{align*}
\end{proof}

\subsection{Proof of Proposition \ref{Pro:D'(u)}}\label{sec:pf D'}

\begin{proof}
For the condition $D'(u_n)$, note that by Proposition \ref{Pro:hyper holder},
\begin{align*}
\P\pp{Y_0>u_n, Y_j>u_n}
=
\esp{ \pp{ \mathbf{1}\{Y_0>u_n\} \mathbf{1}\{ Y_j>u_n\} }}
\le 
\P\pp{ Y_0>u_n }^{2/\pp{1+\gamma_X(j)}}.
\end{align*}
with   $\gamma_X(h)=\esp X(h) X(0) $.
Using $\lim_{n \to \infty} n\P\pp{Y_0>u_n}=\tau$, we get for some constant $C>0$ that
\begin{align*}
    n\sum_{1 \le j\le n/k} \proba\pp{Y_0>u_n, Y_j>u_n }&\le C \sum_{1\le j\le n/k} n^{-1}     n^{2 \gamma_X(j)/(1+\gamma_X(j))}\\
    &\le C \int_0^{1/k} f_n^*(x)dx,
\end{align*}
where
\[
f^*_n(x)=\exp\{ \log(n)  2\gamma_X( \lceil xn \rceil )  / \left(1 +  \gamma_X( \lceil xn \rceil)  \right)   \},
\]
with $\lceil z \rceil$ denoting  the smallest integer no less than $z\in \R$.
Similarly as how $f_n$ in \eqref{eq:f_n} is handled in the proof of Lemma \ref{Lem:key}, with $\gamma_X(n)=o(\log(n)^{-1})$, we can show by the dominated convergence theorem combined with a truncation argument that
\[
\int_0^{1/k} f_n^*(x)dx \rightarrow \int_0^{1/k} 1 \ dx=1/k, 
\]
as $n\rightarrow\infty$. Hence \eqref{eq:D_n'} holds.
\end{proof}
In fact, Proposition \ref{Pro:D'(u)} can be further extended to the case where $Y_k$ is given as one of the components in \eqref{eq:gaus sub Y multi}, as long as all the cross-covariances of $\{X_k\}$ decay to zero faster than an inverse logarithm as the time lag tends to infinity. For the proof, we can simply replace $\gamma_X(h)$ in the proof above by the canonical correlation in \eqref{eq:gamma_Y log rate}.

\small
\bibliographystyle{plainnat}
\bibliography{references}

\end{document}